\documentclass[a4paper,10pt,psamsfonts]{amsart}

\usepackage{amsmath}
\usepackage{amstext}
\usepackage{amsfonts}
\usepackage{amsthm}
\usepackage{amssymb}
\usepackage{dsfont}
\usepackage[bf,SL,BF]{subfigure}
\usepackage{graphicx,epstopdf}

\usepackage{caption}
\usepackage{listings}
\usepackage{hyperref}
\hypersetup{colorlinks}

\usepackage{color}
\usepackage{changes}
\usepackage{lineno}
\usepackage{algorithm}
\usepackage{algorithmic}
\usepackage{epsfig}
\usepackage{pst-all} 
\usepackage{pst-plot} 
\usepackage[space]{grffile}
\usepackage[top=1.0in, bottom=1.0in, left=1.0in, right=1.0in]{geometry}

\newcommand{\triple}{{\vert\kern-0.25ex\vert\kern-0.25ex\vert}}

\theoremstyle{plain}

\newcommand{\be}{\begin{equation}}
\newcommand{\ee}{\end{equation}}

\newtheorem{theorem}{Theorem}[section]
\newtheorem{lemma}[theorem]{Lemma}
\newtheorem{proposition}[theorem]{Proposition}
\newtheorem{corollary}[theorem]{Corollary}
\newtheorem{definition}{Definition}[section]

\newtheorem{assumption}[definition]{Assumption}

\theoremstyle{definition}
\newtheorem{remark}[definition]{Remark}

\begin{document}

\title[Low-variance Sketchy FEM]{Low-Variance Randomised Numerical Linear Algebra for Finite Element Simulation
\footnote{Work made possible through funding support from UKRI EPSRC under grants EP/V028618/1 \& EP/V02860X/1 `RAPID: Real-time Process Modelling and Diagnostics: Powering Digital Factories'.}}

\author[N.~Polydorides]{Nick Polydorides}
\address{Nick Polydorides\\
School of Engineering\\
University of Edinburgh\\
Edinburgh, UK}
\email{n.polydorides@ed.ac.uk}

\author[Y.~Wu]{Yue Wu}
\address{Yue Wu\\
School of Mathematics\\
University of Strathclyde\\
Glasgow, UK}
\email{yue.wu@strath.ac.uk}

\author[H.~Noori]{Hamid Noori}
\address{Hamid Noori\\
Department of Computer Science\\
University of Durham\\
Durham, UK}
\email{hamid.noori@durham.ac.uk}

\author[H.~Vandierendonck]{Hans Vandierendonck}
\address{Hans Vandierendonck\\
Electronics, Electrical Engineering \& Computer Science\\
Queen's University Belfast\\
Belfast, UK}
\email{h.vandierendonck@qub.ac.uk}

\author[R.~Woods]{Roger Woods}
\address{Roger Woods\\
Electronics, Electrical Engineering \& Computer Science\\
Queen's University Belfast\\
Belfast, UK}
\email{r.woods@qub.ac.uk}

\keywords{Randomised numerical linear algebra; finite element method; leverage-score sampling; control variates; inverse problem.} 
\subjclass[2019]{65F05, 65M60, 68W20} 

\begin{abstract}
We present a low-variance randomised numerical linear algebra approach for  multi-query finite element systems arising from parametric elliptic partial differential equations with applications to digital twins and online model calibration. The method relies on Galerkin subspace projection for reducing the dimensionality, and then combines parameter-oblivious leverage-score Bernoulli sampling with a control variates scheme to yield a reduced-variance `forward' sketch and an invertible `inverse' sketch that are then fused to a single efficient regularised estimator. Effectively, this reduces the computational cost in computing the projected system of equations while preserving the structure, stability, and accuracy of the underlying FEM formulation. We derive probabilistic bounds for the sketching error, invertibility, and estimator variance, and then validate the method on large-scale example problems. The results show that when the parameter fields do not vary too sharply, the synergy of control variates together with the sketch fusion can largely offset the loss incurred by the sub-optimal parameter-oblivious sampling. In this regime, our method achieves substantial savings in time, memory, and communication while maintaining accuracy levels that are acceptable for scientific simulation. 
\end{abstract}

\maketitle
\tableofcontents

\section{Introduction} 

Since its inception, the Finite Element Method (FEM) has been a foundational pillar of computational science, providing a practical, agile and principled framework for simulating complex physical phenomena \cite{zienkiewicz1967finite,Elman_Silvester_Wathen_2014}. Engineering applications with online, multi-query modelling scenarios like those encountered in model calibration and parametric uncertainty quantification require rapid solves of large FEM-derived algebraic systems often on constrained computational resources. To maintain algorithmic scalability and tractability in solving such parametric systems, reduced-basis methods have seen a lot of developments \cite{BennerCohenWillcox}, and although these tend to perform very well, they rely on strong prior assumptions on the parameters. On the other hand, data-driven surrogates like PINNs \cite{Raissi2019Physics} and neural operators \cite{Li2020NeuralOG} bypass the high-dimensionality bottleneck but these require simulation data from the FEM or other numerical solvers to train on, and have outstanding issues in performance generalisation and guarantees \cite{AQdoi:10.1142/S0218202525500125}.

Randomised Numerical Linear Algebra (RNLA) \cite{Martinsson_Tropp_2020} provides a middle-ground alternative preserving, in expectation, the theoretical underpinnings of the FEM, whilst expediting algebraic operations by random subsampling and weighting, also known as sketching. Prior work in this area has addressed the use of RNLA to accelerate Krylov-iteration solvers for linear systems \cite{GowerRichtarik,NakatsukasaTropp,BurkeGuttel} as well as projection-based model reduction with random embeddings \cite{BalabanovNouy,BalabanovNouyII}. Perhaps even more influential to our work is the framework of effective resistances in \cite{DrineasMahoneyER} establishing the link between matrix leverage-scores and resistor networks in the context of graph Laplacian matrices, as indeed that of effective stiffness in \cite{AvronToledo} that extends these ideas to address explicitly FEM systems. In both cases, the matrices involved have a symmetric product structure similar to the one we consider here. Our contribution targets explicitly the multi-query FEM setting by synergistically combining:
\begin{itemize}
\item leverage-score Bernoulli sampling based on probabilities that depend on the model geometry but are oblivious of the FEM parameters,
\item control variates-based sketch variance reduction, and
\item a regularised  formulation that fuses a low-variance forward sketch with an invertible sketch to reduce the mean squared error of the sketched FEM solution. 
\end{itemize} 
The paper is organised as follows: In section 2 we provide a very brief overview of the elliptic Partial Differential Equation (PDE) in consideration, leading to the formation of the parametric FEM linear system, and a more precise definition of our problem's setting. Section 3 provides the groundwork for the subspace projection and explains the rationale for preconditioning the projected equations, before introducing our randomised sketching approach in section 4. Section 5 holds our analysis results where we derive probabilistic bounds on the sketching error, failure probabilities and expressions for the sketch variance, and thereafter in section 6 we present our control variates scheme and an optimisation problem that fuses two sketches into a more efficient sketch, while we also provide a detailed description of our algorithm. Finally, in section 7 we present some numerical results of our approach based on two- and three-dimensional FEM models highlighting the algorithmic  parameters that control its performance. 

\subsection{Notation} 
Scalars and  functions are denoted by lowercase letters. Vectors, such as $\mathbf{x} \in \mathds{R}^n$, are denoted by bold lowercase letters, and matrices like $\mathbf{A} \in \mathds{R}^{n \times m}$, by bold uppercase letters, while their individual entries are written as $x_i$ and $a_{ij}$. The minimum and maximum entries of $\mathbf{x}$ or a diagonal $\mathbf{X}$ are expressed as $x_{\min}$ and $x_{\max}$ respectively. For a matrix $\mathbf{A}$, the $i$th row and column are denoted by $\mathbf{a}_{(i)}$ and $\mathbf{a}^{(i)}$, respectively, $\mathbf{A}^\top$ denotes the transpose, and $\kappa(\mathbf{A})$ its $\ell_2$ condition number. The notation $\mathbf{A} \succeq 0$ means that $\mathbf{A}$ is positive semidefinite, and $\mathrm{Tr}(\mathbf{A})$ denotes the trace of a square matrix $\mathbf{A}$. Unless stated otherwise, $\|\mathbf{x}\|$ denotes the Euclidean norm of $\mathbf{x}$, $\mathrm{diag}(\mathbf{x})$ is a diagonal matrix, while $\mathrm{diag}(\mathbf{A})$ is the column vector based on the main diagonal of $\mathbf{A}$. For matrices, $\|\mathbf{A}\|$ and $\|\mathbf{A}\|_\mathrm{F}$ denote the spectral and Frobenius norms, respectively, while $\sigma_{\max}(\mathbf{A})$ and $\lambda_{\max}(\mathbf{A})$ denote the largest singular value and eigenvalue. For conformably dimensioned matrices $\mathbf{A}$ and $\mathbf{B}$, $\mathbf{A} \circ \mathbf{B}$ denotes the Hadamard product, while identically distributed random matrices are distinguished by subscripts, e.g. $\mathbf{A}_1, \ldots, \mathbf{A}_n$. With $\{N\}$ we denote the set of the first $N$ positive integers. For scalar random variables, $\mathbb{E}[\cdot]$, $\mathbb{V}(\cdot)$, and $\mathbb{C}(\cdot,\cdot)$ denote expectation, variance, and covariance, respectively, while for random vectors or matrices, $\mathbb{E}[\cdot]$ is taken element-wise and $\mathbb{V}(\cdot)$ in the Frobenius-norm sense.

\section{Finite element method preliminaries}

Consider the elliptic PDE with a Dirichlet boundary condition 
\begin{equation}\label{pde}
- \nabla \cdot \vec{p}(\mathbf{x}) \nabla u(\mathbf{x}) = f(\mathbf{x}) \quad  \mathbf{x} \in \Omega, \qquad u(\mathbf{x})= u^\partial(\mathbf{x}) \quad \mathbf{x} \in \partial \Omega,   
\end{equation}
where $\vec{p}$ is a symmetric tensor field in the interior of a domain $\Omega \subset \mathds{R}^{d}$, $d \in \{2,3\}$ with spatial coordinates $\mathbf{x}$. We are interested in finding the solution $u$ under the influence of a forcing function $f$, when the solution on the boundary $\partial \Omega$ is known to be  $u^\partial$. The model equation in \eqref{pde} has a unique solution, in the strong sense, if $f$ is square integrable over the closure of the domain, provided that the boundary of the domain is sufficiently smooth \cite{Elman_Silvester_Wathen_2014}. In the more general, realistic, setting one typically resorts in computing a weak solution instead. In this respect, multiplying \eqref{pde} with a continuous test function $w: \Omega \rightarrow \mathds{R}$ that vanishes on the boundary $\partial \Omega$ and integrating by parts yields the weak form of the PDE   
\begin{equation}\label{weakform}
\int_\Omega \vec{p}(\mathbf{x}) \nabla u(\mathbf{x}) \cdot \nabla w(\mathbf{x}) \, \mathrm{d}\mathbf{x} - \int_\Omega f(\mathbf{x}) \, w(\mathbf{x}) \, \mathrm{d}\mathbf{x} = \int_{\partial \Omega} w(\mathbf{x}) \, \vec{p}(\mathbf{x}) \nabla u(\mathbf{x}) \cdot \mathbf{\hat n} \, \mathrm{d}s,  
\end{equation}
where $\mathbf{\hat n}$ is the outward unit normal on the boundary. To make the problem computationally tractable, the continuous functions in the weak form are approximated by interpolation using functions of compact support. To enable this reduction to finite dimension, the domain geometry is discretised to a mesh (grid) $\Omega_h(n_n,n_e)$ comprising $n_n$ nodes  connected in $n_e$ linear elements. This allows to approximate $\vec{p}$ and $f$ as piecewise constant functions on elements and the solution as a piecewise linear function with $n$ degrees of freedom  
\begin{equation}\label{dofsplit}
u(\mathbf{x}) \approx \sum_{i=1}^{n_n} u_i \varphi_i(\mathbf{x}) = \sum_{i=1}^{n} u_i \varphi_i(\mathbf{x}) + \sum_{i=n+1}^{n_n} u^\partial_i \varphi_i(\mathbf{x})
\end{equation}
conforming the boundary condition in \eqref{pde} where $\{\varphi_i\}_{i=1}^{n_n}: \Omega_h \rightarrow \mathds{R}$ are linear interpolation functions. Setting $w=u$ and $u=u^\partial$ in the boundary integral of  \eqref{weakform} and imparting \eqref{dofsplit} we arrive at a system of $n$ linear algebraic equations in $N \doteq dn_e$ parameters and $n$ unknowns
\begin{equation}\label{fempde}
\mathbf{A}(\mathbf{P})\mathbf{u} = \mathbf{d}(\mathbf{P}) 
\end{equation}
where  
\begin{equation}\label{AP_and_dP}
\mathbf{A}(\mathbf{P})= \mathbf{D}^\top \mathbf{P} \mathbf{D}, \qquad \mathbf{d}(\mathbf{P}) \doteq \mathbf{f} - \mathbf{A}_\partial (\mathbf{P}) \mathbf{u}^\partial, \qquad \mathbf{A}_\partial(\mathbf{P}) = \mathbf{D}^\top \mathbf{P} \mathbf{D}_\partial,
\end{equation}
with $\mathbf{P} \in \mathds{R}^{N \times N}$ a bounded positive diagonal matrix and $\mathbf{D} \in \mathds{R}^{N \times n}$ tall sparse matrix with full column rank, resulting in $\mathbf{A}(\mathbf{P})$ being symmetric positive definite (SPD). On the right hand side, $\mathbf{A}_\partial(\mathbf{P}) \in \mathds{R}^{n \times m}$ and $\mathbf{D}_\partial \in \mathds{R}^{N \times m}$ are tall sparse matrices with full column rank $m \doteq n_n-n$, whilst vector $\mathbf{f} \in \mathds{R}^n$ depends on the forcing term. Although $\mathbf{A}(\mathbf{P})$ is by construction SPD, for certain choices of $\mathbf{P}$ and or $\mathbf{D}$, the later of which relates to the regularity of the mesh elements, it may become ill-conditioned \cite{KANNAN201479}. We thus introduce the following assumption regarding the solvability of the high-dimensional FEM system. 

\vskip 10 pt
\begin{assumption}\label{HDsolvable}
For any admissible $\mathbf{P} \succ 0$ the solution of \eqref{fempde}
\begin{equation}\label{solHD}
\mathbf{u}^\star = \mathbf{A}^{-1}(\mathbf{P}) \bigl ( \mathbf{f} - \mathbf{A}_\partial(\mathbf{P}) \mathbf{u}^\partial \bigr )
\end{equation}
exists and has finite norm \cite{Elman_Silvester_Wathen_2014}. In double-precision floating-point arithmetic, this implies that the condition number of the matrix is bounded by $ \kappa(\mathbf{A}(\mathbf{P})) \ll 10^{16} $. 
\end{assumption}
Under standard assumptions, solving \eqref{fempde} has complexity $\mathcal{O}(n^3)$ with direct methods, whereas non-preconditioned iterative methods require about $ \mathcal{O}(\sqrt{\kappa(\mathbf{A})}\,\mathrm{nnz}(\mathbf{A}))$ operations \cite{Saad}. In the dimensions $(n,N)$ of interest in this work, both direct and iterative algorithms, even when combined with preconditioning, become prohibitively expensive in the multi-query setting. We conclude this introductory section by giving a more precise definition of the problem under consideration.

\begin{definition}
Given a stream of positive and bounded parameter diagonals $\mathbf{P}^{(1)}, \mathbf{P}^{(2)},\ldots,$ approximate the solutions $\mathbf{u}(\mathbf{P}^{(1)}), \mathbf{u}(\mathbf{P}^{(2)}), \ldots,$ of \eqref{fempde} for a fixed $\mathbf{f}$, in a memory, communication and time-efficient manner.
\end{definition}

\section{Subspace projection}

The first stage of our approach introduces a low-dimensional structure for the sought solution vectors in the form of a subspace projection. Owing to the smoothness of the solution to the underlying PDE \eqref{pde}, a characteristic feature of steady-state diffusion problems, we seek to exploit the resulting low-dimensional structure to reduce the number of degrees of freedom in the discrete model. Although such a structure is not immediately apparent in the high-dimensional system \eqref{fempde}, it can be exploited using projection-based model order reduction techniques such as Proper Orthogonal Decomposition (POD) \cite{BennerCohenWillcox}. In effect, we can fix a priori a low-dimensional subspace spanned by $s \ll n$ orthonormal basis functions that captures, on average, the main features of the solutions $\mathbf{u}(\mathbf{P})$ for arbitrary choices of $\mathbf{P}$. The advantage of using the same set of basis vectors for all parameter choices is that it allows for any compute-heavy operations to be performed offline, even though such bases cannot be made optimal for the individual parameter choices. The projection effectively induces an approximation error, which sets a lower bound on the overall error of our approach. To see the effect of the subspace projection approximation let us introduce the subspace 
\begin{equation}
\mathds{S} \doteq \bigl \{ \boldsymbol{\Phi} \mathbf{w} \, \cdot \, \mathbf{w} \in \mathds{R}^s \bigr \},
\end{equation}
where $\boldsymbol{\Phi} \in \mathds{R}^{n \times s}$ is a tall matrix holding the orthonormal basis vectors in its columns and consider the decomposition of the high-dimensional solution of \eqref{solHD} into the component that lies into the subspace and that which is orthogonal to it as $\mathbf{u}^\star = \boldsymbol{\Pi} \mathbf{u}^\star + (\mathbf{I} - \boldsymbol{\Pi}) \mathbf{u}^\star$, where $\boldsymbol{\Pi}: \mathds{R}^n \rightarrow \mathds{S}$ denotes the orthogonal projection operator. Substituting into the linear system \eqref{fempde} we have
\begin{equation}\label{projmodel}
\mathbf{d}(\mathbf{P}) = \mathbf{A}(\mathbf{P}) \boldsymbol{\Pi}\mathbf{u}^\star + \boldsymbol{\epsilon}, 
\end{equation}
where $\boldsymbol{\epsilon} \doteq \mathbf{A}(\mathbf{P}) \bigl (\mathbf{I} - \boldsymbol{\Pi} \bigr )\mathbf{u}^\star$ is the subspace approximation-induced data discrepancy. The point $\boldsymbol{\Pi}\mathbf{u}^\star$ coincides with $\boldsymbol{\Phi}\mathbf{w}^\star$ for an optimal $\mathbf{w}^\star=\boldsymbol{\Phi}^\top \mathbf{u}^\star$. If the residual $\boldsymbol{\epsilon}$ is orthogonal to the subspace $\mathds{S}$ then substituting  $\boldsymbol{\Pi}\mathbf{u}=\boldsymbol{\Phi}\mathbf{w}$ and performing a Galerkin projection of the equation \eqref{projmodel} onto $\mathds{S}$ leads to the subspace-projected FEM system
\begin{equation}\label{projlin}
\boldsymbol{\Phi}^\top \mathbf{A}(\mathbf{P}) \boldsymbol{\Phi} \, \mathbf{w} = \boldsymbol{\Phi}^\top \mathbf{d}(\mathbf{P}).
\end{equation}
Let $\mathbf{w} \in \mathds{R}^s$ be the solution of \eqref{projlin}, then 
$$
\mathbf{w}^\star -\mathbf{w} = \boldsymbol{\Phi}^\top \mathbf{u}^\star - \bigl (\boldsymbol{\Phi}^\top \mathbf{A}(\mathbf{P}) \boldsymbol{\Phi} \bigr )^{-1} \boldsymbol{\Phi}^\top \mathbf{d}(\mathbf{P}),
$$
and from the triangle inequality we can deduce the overall projection-induced error
\begin{equation}\label{projinderr}
\|\mathbf{u}^\star - \boldsymbol{\Phi}\mathbf{w}\| \leq \|(\mathbf{I} - \boldsymbol{\Pi})\mathbf{u}^\star \|+ \|\boldsymbol{\Phi}^\top \mathbf{u}^\star - \mathbf{w}\|,   
\end{equation}
revealing that, even in solving \eqref{projlin} exactly, the solution $\boldsymbol{\Phi}\mathbf{w}$ will still be away from the high-dimensional FEM solution $\mathbf{u}^\star$ by a distance that depends not only on the distance of $\mathbf{u}^\star$ from $\mathds{S}$ but also on that of the low-dimensional $\mathbf{w}$ from the projection of $\mathbf{u}^\star$ on $\mathds{S}$. 

\subsection{Preconditioning}

When using high-dimensional unstructured meshes, the FEM coefficients (stiffness) matrix $\mathbf{A}(\mathbf{P})$ may become ill-conditioned. This is known to occur  when the variation in the values of the diagonal $\mathbf{P}$ spans several orders of magnitude \cite{VavasisWildCoeff}, or when the mesh of the domain's geometry includes some elements with very obtuse or acute angles, which in turn causes some of the elements of the gradients matrix  $\mathbf{D}$ to be extremely large or almost zero \cite{KANNAN201479}. It is thus advantageous to precondition the projected equations in order to mitigate any undesired geometry effects.  

As the subspace projection basis is kept fixed for all parameter choices, using a thin Singular Value Decomposition (SVD) $\mathbf{D} \boldsymbol{\Phi}=\mathbf{U}\boldsymbol{\Sigma}\mathbf{V}^\top$,  the coefficients matrix of the projected system \eqref{projlin} can be further developed as
\begin{equation}\label{phitAphi}
\boldsymbol{\Phi}^\top \mathbf{A}(\mathbf{P}) \boldsymbol{\Phi} = (\mathbf{D} \boldsymbol{\Phi})^\top \mathbf{P} \, (\mathbf{D} \boldsymbol{\Phi}) = \mathbf{V} \boldsymbol{\Sigma}^\top \, ( \mathbf{U}^\top \mathbf{P} \mathbf{U} ) \, \boldsymbol{\Sigma} \mathbf{V}^\top, 
\end{equation}
allowing to recast the projected system in a preconditioned form as
\begin{equation}\label{projY}
\mathbf{Y}(\mathbf{P}) \mathbf{v} = \mathbf{q}(\mathbf{P}),
\end{equation}
for a symmetric coefficients matrix 
\begin{equation}\label{targetY}
    \mathbf{Y}(\mathbf{P}) \doteq \mathbf{U}^\top \mathbf{P} \mathbf{U} 
\end{equation}
through a change of variables $\mathbf{v} \doteq \boldsymbol{\Sigma}\mathbf{V}^\top \mathbf{w}$,
and a right hand side vector
\begin{equation}\label{qP}
\mathbf{q}(\mathbf{P}) = \boldsymbol{\Sigma}^{-1} \mathbf{V}^\top \boldsymbol{\Phi}^\top \mathbf{d}(\mathbf{P}). 
\end{equation}
By virtue of assumption \ref{HDsolvable}, $\mathbf{D}$ has full column rank and since $\boldsymbol{\Phi}$ has orthonormal columns, $\mathrm{rank}(\mathbf{D}\boldsymbol{\Phi})=s$ which implies that $\mathbf{\Sigma} \succ 0$, and the mapping between $\mathbf{v}$ and $\mathbf{w}$ is bijective. Further, as the $\ell_2$ condition number of the coefficients matrix is important for the numerical stability of the solution, it is prudent to examine those of the original \eqref{phitAphi} and the preconditioned \eqref{targetY} projected matrices. Since both $\mathbf{Y}(\mathbf{P})$ and  $\mathbf{V}\boldsymbol{\Sigma}^\top \mathbf{Y}(\mathbf{P}) \boldsymbol{\Sigma}\mathbf{V}^\top$ are SPD, then as $\mathbf{V}$ is orthogonal then $\kappa(\mathbf{V}\boldsymbol{\Sigma}^\top \mathbf{Y}(\mathbf{P}) \boldsymbol{\Sigma}\mathbf{V}^\top) = \kappa(\boldsymbol{\Sigma}^\top \mathbf{Y}(\mathbf{P}) \boldsymbol{\Sigma})$. On the other hand, since $\mathbf{U}$ has orthonormal columns $\kappa(\mathbf{Y}(\mathbf{P})) \leq \kappa(\mathbf{P})=p_{\max}/p_{\min}$. In the special case of $\mathbf{P}=\mathbf{I}$,
$\mathbf{Y}(\mathbf{I})$ is perfectly conditioned while $\kappa(\boldsymbol{\Sigma}^\top \mathbf{Y}(\mathbf{I}) \boldsymbol{\Sigma}) =  \bigl (\sigma_{\max}/ \sigma_{\min} \bigr )^2$  can be arbitrarily ill-conditioned. The general case however is more subtle, as it depends on how the eigenstructure of $\mathbf{Y}(\mathbf{P})$ interacts with the diagonal scaling induced by $\boldsymbol{\Sigma}$. 

\vskip 10 pt

\begin{remark}\label{precond}

Consider $\mathbf{C} \doteq \mathbf{Y}^{\frac 1 2}(\mathbf{P}) \boldsymbol{\Sigma} \mathbf{Y}^{-\frac 1 2}(\mathbf{P})$ through which the projected FEM matrix is expressed as
$$
\boldsymbol{\Sigma}^\top \mathbf{Y}(\mathbf{P}) \boldsymbol{\Sigma} = \mathbf{Y}^{\frac 1 2}(\mathbf{P}) \mathbf{C}^\top \mathbf{C} \mathbf{Y}^{\frac 1 2} (\mathbf{P}).
$$
Then $\sigma_{\min}(\mathbf{C})^2 \mathbf{I} \preceq \mathbf{C}^\top \mathbf{C} \preceq \sigma_{\max} (\mathbf{C})^2 \mathbf{I}$, and thus $\sigma_{\min} (\mathbf{C})^2\,\mathbf{Y}(\mathbf{P}) \;\preceq\; \boldsymbol{\Sigma}^\top \mathbf{Y}(\mathbf{P}) \boldsymbol{\Sigma} \;\preceq\; \sigma_{\max}(\mathbf{C})^2 \,\mathbf{Y}(\mathbf{P})$. Moreover, for SPD matrices $ \mathbf{Y}(\mathbf{P})$ and $\boldsymbol{\Sigma}^\top \mathbf{Y}(\mathbf{P}) \boldsymbol{\Sigma}$ this implies
$$
\frac{\kappa\!\left(\mathbf{Y}(\mathbf{P})\right)}{\kappa(\mathbf{C})^{2}}
\leq
\kappa\!\left(\boldsymbol{\Sigma}^{\top}\mathbf{Y}(\mathbf{P})\boldsymbol{\Sigma}\right)
\leq
\kappa(\mathbf{C})^{2}\kappa\!\left(\mathbf{Y}(\mathbf{P})\right).
$$
Thus $\kappa(\mathbf{C}) $ may be interpreted as a measure of the extent to which the diagonal scaling induced by $\boldsymbol{\Sigma}$ distorts the conditioning of the projected system: when $\kappa(\mathbf{C}) \approx 1$, this effect is negligible, whereas for large $ \kappa(\mathbf{C})$ it may be substantial, thereby conferring an advantage to the preconditioned system.
\end{remark}
Remark \ref{precond} shows that the although the presence of $\boldsymbol{\Sigma}$ as in the original projected equations can improve the conditioning,  heuristically this occurs when directions associated with large eigenvalues of $\mathbf{Y}(\mathbf{P})$ are weighted weakly by $\boldsymbol{\Sigma}$, while those associated with small eigenvalues are weighted more strongly, thereby driving $\kappa(\mathbf{C})$ closer to $1$. In general, however, the effect of $\boldsymbol{\Sigma}$ is not necessarily beneficial, and it may worsen conditioning by as much as a factor of $\kappa(\boldsymbol{\Sigma})^2$. The reformulation in \eqref{projY} removes this additional potential source of ill-conditioning and separates the conditioning of $\mathbf{Y}(\mathbf{P})$ from the interaction between $\boldsymbol{\Sigma}$ and the eigenstructure of $\mathbf{Y}(\mathbf{P})$.

\section{Randomised sketching}

Assembling the low-dimensional system \eqref{projY}, requires computing the $s \times s$ symmetric matrix $\mathbf{Y}(\mathbf{P})$ and the $s \times 1$ vector $\mathbf{q}(\mathbf{P})$, both of which involving $N$-dimensional matrix products. To mitigate the computational complexity when $N$ is extremely large, we appeal to the RNLA framework to construct estimators $\hat{\mathbf{Y}}(\mathbf{P})$ and  $\hat{\mathbf{q}}(\mathbf{P})$ such that $\hat{\mathbf{v}} = \hat{\mathbf{Y}}^{-1}(\mathbf{P}) \hat{\mathbf{q}}(\mathbf{P})$ has a small mean squared error. The sought $\hat{\mathbf{Y}}(\mathbf{P})$ must be invertible with high probability, and such a sketch can be designed based on the spectral norm criterion \cite{Drineas2012Fast} 
\begin{equation}\label{sketchcriterionY}
\hat{\mathbf{Y}}^\star(\mathbf{P}) = \arg \min_{\hat{\mathbf{Y}} \in \mathds{R}^{s \times s}} \| \hat{\mathbf{Y}}^{-1}(\mathbf{P}) \mathbf{Y}(\mathbf{P}) - \mathbf{I}\|.
\end{equation}
Regarding the sketch of the right hand side vector, notice that through a thin SVD of the $N \times m$ matrix $\mathbf{D}_\partial = \mathbf{U}_\partial \boldsymbol{\Sigma}_\partial \mathbf{V}_\partial^\top$ this becomes
\begin{equation}\label{qPsplit}
    \mathbf{q}(\mathbf{P}) = \bigl ( \mathbf{V} \boldsymbol{\Sigma} \bigr )^{-1} \boldsymbol{\Phi}^\top \mathbf{f} - \mathbf{U}^\top \mathbf{P} \mathbf{U}_\partial \, \boldsymbol{\Sigma}_\partial \mathbf{V}_\partial^\top  \mathbf{u}^\partial, 
\end{equation}
which for any instance of the parameter matrix, can be estimated by sketching the $s \times m$ matrix
\begin{equation}
\mathbf{Z}(\mathbf{P}) \doteq \mathbf{U}^\top \mathbf{P}\mathbf{U}_\partial.
\end{equation}
As $\mathbf{q}(\mathbf{P})$ is linear in $\mathbf{Z}(\mathbf{P})$, this vector is optimally approximated when the sketch of $\mathbf{Z}(\mathbf{P})$ is constructed based on the minimum variance criterion \cite{DM16}
\begin{equation}\label{Zfrocriterion}
\hat{\mathbf{Z}}^\star = \arg \min_{\hat{\mathbf{Z}} \in \mathds{R}^{s \times m}} \|\hat{\mathbf{Z}}(\mathbf{P}) - \mathbf{Z}(\mathbf{P})\|_\mathrm{F},
\end{equation}
ultimately leading to a sketch-approximated projected right hand side
\begin{equation}\label{sketchcriteriond}
\hat{\mathbf{q}}(\mathbf{P}) = \bigl ( \mathbf{V} \boldsymbol{\Sigma} \bigr )^{-1} \boldsymbol{\Phi}^\top \mathbf{f} - \hat{\mathbf{Z}}(\mathbf{P}) \, \boldsymbol{\Sigma}_\partial \mathbf{V}_\partial^\top  \mathbf{u}^\partial,
\end{equation}
where vector $\mathbf{u}^\partial$ is typically low-dimensional given that its support is restricted to the boundary of the domain. Furnished with the two matrix sketches satisfying \eqref{sketchcriterionY} and \eqref{Zfrocriterion} respectively, we can readily obtain a \emph{single sketch estimator} of the projected equation solution \cite{LUNG2020112933}
\begin{equation}\label{wandv}  
\hat{\mathbf{w}} = \mathbf{V} \boldsymbol{\Sigma}^{-1} \hat{\mathbf{v}}, \quad \hat{\mathbf{v}} = \hat{\mathbf{Y}}^{-1}(\mathbf{P}) \hat{\mathbf{q}}(\mathbf{P}).
\end{equation}
Exploiting the positive definiteness of the diagonal $\mathbf{P}$ we can cast the desired matrices as weighted outer products as
\begin{equation}\label{multideterm}
\mathbf{Y}(\mathbf{P}) = (\mathbf{P}^{\frac 1 2} \mathbf{U})^\top (\mathbf{P}^{\frac 1 2} \mathbf{U}), \quad \text{and} \quad \mathbf{Z}(\mathbf{P}) = (\mathbf{P}^{\frac 1 2} \mathbf{U})^\top (\mathbf{P}^{\frac 1 2} \mathbf{U}_\partial),
\end{equation}
and thereafter subsample their rows according to sampling distributions that conform the criteria in \eqref{sketchcriterionY} and \eqref{Zfrocriterion}. To derive these distributions we recall the definition of the leverage scores of a matrix. 

\begin{definition}\label{Leverages}
For a real $N \times s$ matrix $\mathbf{A}=\mathbf{U}_{\mathbf{A}}\boldsymbol{\Sigma}_{\mathbf{A}}\mathbf{V}_{\mathbf{A}}^\top$ with $N > s$ and $\mathrm{rank}(\mathbf{A})=s$, the statistical leverage scores associated with its rows are given by \cite{Drineas2012Fast}
\begin{equation}\label{ell_i_A}
\ell_i(\mathbf{A}) \doteq \mathbf{a}_{(i)}(\mathbf{A}^\top \mathbf{A})^{-1} \mathbf{a}_{(i)}^\top = {\mathbf{u}_{\mathbf{A}}}_{(i)}{\mathbf{u}_{\mathbf{A}}}_{(i)}^\top = \bigl \|{\mathbf{u}_{\mathbf{A}}}_{(i)} \bigr \|^2, \quad \forall i \in \{N\},
\end{equation}
and these satisfy $0 < \ell_i(\mathbf{A}) < 1$ and $\sum_{i=1}^N \ell_i(\mathbf{A}) = s$. 
\end{definition}

Let $\boldsymbol{\xi}(\mathbf{U})$ be a discrete sampling distribution over the index set $\{N\}$ and consider independent Bernoulli variables $\gamma_1,\ldots,\gamma_N$with non-identical probabilities 
\begin{equation}\label{Berndef}
   \eta_i = \mathbb{P}(\gamma_i = 1) = \min \bigl \{1,c \, \xi_i( \mathbf{U}) \bigr \} > 0, \quad \text{with} \quad \xi_i(\mathbf{U}) = \frac 1 s \ell_i(\mathbf{U}) \; \text{for all} \, i \in \{N\}
\end{equation}
and a positive integer $c = \mathbb{E} \bigl [\sum_{i=1}^N \eta_i \bigr ]$ that upper bounds the expected sample size, even though the actual number of samples drawn is random. This allows to define a random sketching diagonal matrix $\mathbf{S} \in \mathds{R}^{N \times N}$ 
\begin{equation}\label{sdiag}
    \mathbf{S} = \mathrm{diag} \Bigl ( \frac{\gamma_1}{\sqrt{\eta_1}}, \ldots, \frac{\gamma_N}{\sqrt{\eta_N}} \Bigr ),    
\end{equation}
leading to a sketch estimator 
\begin{equation}\label{invsketch}
\hat{\mathbf{Y}}(\mathbf{P}) = (\mathbf{S}\mathbf{P}^{\frac 1 2} \mathbf{U})^\top (\mathbf{S}\mathbf{P}^{\frac 1 2} \mathbf{U}).
\end{equation}
This Bernoulli-based sketching is equivalent to sampling the rows of $\mathbf{P}^{\frac 1 2} \mathbf{U}$ independently without replacement, since its rows can be sampled at most once. Crucially however, this sampling strategy is, by choice, oblivious to the parameters as the sampling probabilities depend on the leverage scores of $\mathbf{U}$ rather than those of the sampled $\mathbf{P}^{\frac 1 2}\mathbf{U}$. This is because in the multi-query scenario we focus on, computing the exact leverage scores for each $\mathbf{P}$ is computationally more expensive than performing the multiplication in \eqref{multideterm} deterministically. Hence instead of $\boldsymbol{\xi}(\mathbf{P}^{\frac 1 2}\mathbf{U})$ for each choice of $\mathbf{P}$, we settle by necessity to the suboptimal  $\boldsymbol{\xi}(\mathbf{U})$ and the associated Bernoulli probabilities $\boldsymbol{\eta}$ that can be precomputed offline. 
This approximation of the optimal leverage score sampling comes at the expense of a higher sketching error, even though it retains some partial information of the sampled matrix through $\mathbf{U}$.  

In regards to constructing the sketched estimator for $\mathbf{Z}(\mathbf{P})$ as in \eqref{multideterm} with minimal variance as per the Frobenius norm criterion \eqref{Zfrocriterion} we adapt a classical algorithm for randomised matrix multiplication \cite{DrineasKannanMahoney2006} and set 
\begin{equation}\label{fwdsketch}
\hat{\mathbf{Z}}(\mathbf{P}) \doteq \bigl (\mathbf{M} \mathbf{P}^{\frac 1 2} \mathbf{U} \bigr )^\top \bigl ( \mathbf{M} \mathbf{P}^{\frac 1 2} \mathbf{U}_\partial \bigr ),
\end{equation}
where $\mathbf{M} \in \mathds{R}^{N \times N}$ is a random sparse diagonal matrix 
\begin{equation}\label{checksdiag}
\mathbf{M} \doteq \mathrm{diag} \Bigl ( \frac{\tilde{\gamma}_1}{\sqrt{\tilde{\eta}_1}}, \ldots, \frac{\tilde{\gamma}_N}{\sqrt{\tilde{\eta}_N}} \Bigr ),    
\end{equation}
and $\tilde{\gamma}_1, \ldots, \tilde{\gamma}_N$ are independent Bernoulli random variables with probabilities
\begin{align}\label{defnzeta}
\tilde{\eta}_i = \mathbb{P}(\tilde{\gamma}_i=1) =\min \bigl \{1, c \, \zeta_i (\mathbf{U},\mathbf{U}_\partial) \bigr \} > 0, \; \text{with} \; \zeta_i(\mathbf{U},\mathbf{U}_\partial) = \frac{\ell_i(\mathbf{U}) \, \ell_i(\mathbf{U}_\partial)}{\sum_{j=1}^N \ell_j(\mathbf{U}) \ell_j(\mathbf{U}_\partial)} \; \text{for all} \, i \in \{N\}.
\end{align}
Unlike \cite{DrineasKannanMahoney2006} however, we opt not to form the optimal distribution but rather sample with suboptimal probabilities that do not depend on the parameters thus allowing to use the same  $\tilde{\boldsymbol{\eta}}$-based sketching matrix $\mathbf{M}$ for every parameter query, which in turn samples  same index rows from $\mathbf{P}^{\frac 1 2} \mathbf{U}$ and  $\mathbf{P}^{\frac 1 2} \mathbf{U}_\partial$ independently and without replacement.

With these sketching mechanisms in place, we can now introduce the principal random objects underpinning our developments. Foremost among them is an ensemble of independent and identically distributed (iid) sketches. Let $\nu>1$ be a positive integer, then define the \emph{multi-sketch average estimators}
\begin{equation}\label{sketchave}
\bar{\mathbf{Y}}(\mathbf{P}) = \frac 1 \nu \sum_{t=1}^{\nu} \hat{\mathbf{Y}}_t(\mathbf{P}), \quad \text{and} \quad \bar{\mathbf{Z}}(\mathbf{P}) = \frac 1 \nu \sum_{t=1}^{\nu} \hat{\mathbf{Z}}_t(\mathbf{P}),    
\end{equation}
where each ensemble involves $\nu$ iid sketches according to the probabilities $\boldsymbol{\eta}$ and $\boldsymbol{\tilde{\eta}}$ respectively, i.e. for $t=1,\ldots, \nu$ we have $\hat{\mathbf{Y}}_t = \mathbf{U}^\top \mathbf{P}^{\frac 1 2}\mathbf{S}_t^\top \mathbf{S}_t \mathbf{P}^{\frac 1 2} \mathbf{U}$ where $\mathbf{S}_1, \ldots, \mathbf{S}_t, \ldots \mathbf{S}_{\nu}$ are iid copies of the sketching matrix in \eqref{sdiag} and similarly for $\hat{\mathbf{Z}}_t$.
This averaging yields the multi-sketch solution estimator
\begin{equation}\label{multisketchave}
\bar{\mathbf{w}} = (\boldsymbol{\Sigma} \mathbf{V}^\top \bigr )^{-1} \bar{\mathbf{v}}, \quad \bar{\mathbf{v}} = \bar{\mathbf{Y}}^{-1}(\mathbf{P}) \bar{\mathbf{q}}(\mathbf{P}),   
\end{equation}
where $\bar{\mathbf{q}}(\mathbf{P})$ is obtained from \eqref{sketchcriteriond} by swapping the single sketch estimator with the $\nu$-sketch average. For the sake of clarity we omit the $\nu$-dependence from the multi-sketch averages while we also fix the expected per-sketch sample budget at $c$. Subject to $\bar{\mathbf{Y}}(\mathbf{P})$ being invertible, it is easy to see that with a total of about $\nu c$ independent samples, $\bar{\mathbf{w}}$ is more efficient than the single sketch estimator in \eqref{wandv}.


\section{Error analysis}\label{erroranalysis}

\subsection{On the coefficients matrix sketch}

We begin our analysis by investigating the impact of suboptimal probabilities on the multi-sketch average estimator $\bar{\mathbf{Y}}(\mathbf{P})$, focusing on bounding the probability of having a non-invertible sketch, and thereafter address its bias and variance induced errors. Before we embark on this task we must invoke an intermediate lemma. 

\begin{lemma}\label{normAmI} \cite{LUNG2020112933}
For $\mathbf{P} \succ 0$, let the thin SVD $\mathbf{P}^{\frac 1 2} \mathbf{U} = \mathbf{U}_y \boldsymbol{\Sigma}_y \mathbf{V}_y^\top$ and a non-negative diagonal sketching matrix $\mathbf{S}$ such that  $\|\mathbf{U}_y^\top \mathbf{S}^\top \mathbf{S} \mathbf{U}_y - \mathbf{I} \| \leq \epsilon$ for $0 < \epsilon < 1$, then \begin{equation}\label{lem1}
\frac{\epsilon}{1 + \epsilon} \leq \|\bigl ( \mathbf{U}_y^\top \mathbf{S}^\top \mathbf{S} \mathbf{U}_y \bigr )^{-1} - \mathbf{I}\| \leq \frac{\epsilon}{1 - \epsilon}.
\end{equation}
\end{lemma}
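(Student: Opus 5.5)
The plan is to work with the symmetric matrix $\mathbf{B} \doteq \mathbf{U}_y^\top \mathbf{S}^\top \mathbf{S} \mathbf{U}_y \in \mathds{R}^{s\times s}$ and reduce everything to scalar statements about its eigenvalues. Since $\mathbf{S}$ is a nonnegative diagonal matrix, $\mathbf{B}$ is symmetric positive semidefinite. Let its eigenvalues be $\lambda_1 \le \dots \le \lambda_s$ with orthonormal eigenvectors.

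The hypothesis $\|\mathbf{B}-\mathbf{I}\|\le\epsilon$ then says exactly that $\max_j |\lambda_j - 1| \le \epsilon$. Hence every $\lambda_j \in [1-\epsilon, 1+\epsilon]$, and $\lambda_j>0$ because $\epsilon<1$. So $\mathbf{B}$ is invertible and $\mathbf{B}^{-1}$ shares its eigenvectors, with eigenvalues $1/\lambda_j$. Consequently
\[
\|\mathbf{B}^{-1}-\mathbf{I}\| = \max_j \Bigl|\frac{1}{\lambda_j}-1\Bigr| = \max_j \frac{|1-\lambda_j|}{\lambda_j}.
\]

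For the upper bound, each term satisfies $|1-\lambda_j|/\lambda_j \le \epsilon/(1-\epsilon)$, since the numerator is at most $\epsilon$ and the denominator is at least $1-\epsilon$.

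The lower bound is the only delicate point. Taken literally, it cannot hold for every $\mathbf{S}$ satisfying the hypothesis: if $\mathbf{B}=\mathbf{I}$, the left-hand side is $0$. I would therefore read $\epsilon$ as the attained value $\epsilon = \|\mathbf{B}-\mathbf{I}\|$, which is how the result in \cite{LUNG2020112933} is used. Under that reading, pick $j^\ast$ with $|\lambda_{j^\ast}-1|=\epsilon$. Then $\lambda_{j^\ast} \le 1+\epsilon$, so
\[
\|\mathbf{B}^{-1}-\mathbf{I}\| \ge \frac{|1-\lambda_{j^\ast}|}{\lambda_{j^\ast}} \ge \frac{\epsilon}{1+\epsilon}.
\]
I will state this interpretation explicitly in the proof. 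Equivalently, the lower bound holds with $\epsilon$ replaced by $\|\mathbf{B}-\mathbf{I}\|$.

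Beyond that, the main step is the spectral reduction, which rests on the symmetry of $\mathbf{B}$ and the commutation of $\mathbf{B}$ with $\mathbf{B}^{-1}$. It is routine, and no earlier result from the paper is needed.
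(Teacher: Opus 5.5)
Your proof is correct and complete. The paper gives no proof of its own here (the lemma is quoted from \cite{LUNG2020112933}), so there is no alternative route to compare against. Your spectral reduction is the natural argument. Since $\mathbf{B}$ and $\mathbf{B}^{-1}-\mathbf{I}$ are symmetric, both norms are maxima over eigenvalues. The hypothesis forces $\lambda_j\in[1-\epsilon,1+\epsilon]\subset(0,2)$, and the two scalar estimates on $|1-\lambda_j|/\lambda_j$ finish the job.

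Your objection to the lower bound is also right. Because $\mathbf{U}_y$ has orthonormal columns, the admissible choice $\mathbf{S}=\mathbf{I}$ gives $\mathbf{B}=\mathbf{I}$. The hypothesis then holds for every $\epsilon\in(0,1)$, while $\|\mathbf{B}^{-1}-\mathbf{I}\|=0<\epsilon/(1+\epsilon)$. The valid form is the one you state, $\|\mathbf{B}^{-1}-\mathbf{I}\|\ge \|\mathbf{B}-\mathbf{I}\|/(1+\|\mathbf{B}-\mathbf{I}\|)$. It also follows in one line from $\mathbf{I}-\mathbf{B}=\mathbf{B}(\mathbf{B}^{-1}-\mathbf{I})$ and $\|\mathbf{B}\|\le 1+\|\mathbf{B}-\mathbf{I}\|$.

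The correction costs nothing, since the paper only ever uses the upper bound, in \eqref{boundfactor}. In fact your eigenvalue argument yields the sharper bound $\|\mathbf{B}^{-1}-\mathbf{I}\|\le\|\mathbf{B}-\mathbf{I}\|/(1-\epsilon)$, which is what the last step of \eqref{boundfactor} actually needs. The prefactor $\epsilon/(1-\epsilon)$ written there in front of $\|\mathbf{U}_y^\top\mathbf{S}^\top\mathbf{S}\mathbf{U}_y-\mathbf{I}\|$ does not follow from the lemma. Taking $\mathbf{P}=\mathbf{I}$ and $\mathbf{S}=\sqrt{1-\epsilon}\,\mathbf{I}$ gives a left-hand side of $\epsilon/(1-\epsilon)$ against a right-hand side of $\epsilon^2/(1-\epsilon)$.
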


A sketched estimator \eqref{invsketch} that satisfies the conditions of lemma \ref{normAmI} incurs a sketching error that is bounded in spectral norm as
\begin{align} \nonumber
\|\hat{\mathbf{Y}}^{-1}(\mathbf{P}) \mathbf{Y}(\mathbf{P}) - \mathbf{I}\| & = \|\bigl (\mathbf{V}_y \boldsymbol{\Sigma}_y \mathbf{U}_y^\top \mathbf{S}^\top \mathbf{S} \mathbf{U}_y \boldsymbol{\Sigma}_y \mathbf{V}_y^\top \bigr )^{-1}\mathbf{V}_y \boldsymbol{\Sigma}_y^2 \mathbf{V}_y^\top - \mathbf{I}\|\\ \nonumber
& = \|\boldsymbol{\Sigma}_y^{-1} \bigl ( \mathbf{U}_y^\top \mathbf{S}^\top \mathbf{S} \mathbf{U}_y \bigr )^{-1}\boldsymbol{\Sigma}_y -  \boldsymbol{\Sigma}_y^{-1}\boldsymbol{\Sigma}_y\|\\ \nonumber
& \leq \sqrt{\kappa(\mathbf{Y}(\mathbf{P}))} \|(\mathbf{U}_y^\top \mathbf{S}^\top \mathbf{S} \mathbf{U}_y)^{-1} - \mathbf{I}\|\\ \label{boundfactor}
& \leq \frac{\epsilon}{1 - \epsilon} \sqrt{\kappa(\mathbf{Y}(\mathbf{P}))} \|\mathbf{U}_y^\top \mathbf{S}^\top \mathbf{S} \mathbf{U}_y - \mathbf{I}\|,
\end{align}
revealing the impact of the condition number of the matrix being sketched on the sketching error. 

Now consider the multi-sketch average estimator $\bar{\mathbf{Y}}(\mathbf{P})$ as in \eqref{sketchave} formed by $\nu$ iid sketches, based on the same $\mathbf{P}$-oblivious sampling probabilities and a sketch budget $c$. The last inequality in \eqref{boundfactor} allows to bound the error of the multi-sketch average estimator in probability. 

\begin{theorem}\label{concentave}

Let $\mathbf{P} \in \mathds{R}^{N \times N}$ be a positive diagonal and let $\mathbf{U} \in \mathds{R}^{N \times s}$ have orthonormal columns. Define $\mathbf{Y}(\mathbf{P}) = \mathbf{U}^\top \mathbf{P} \mathbf{U}$ and its thin SVD $ \mathbf{P}^{\frac 1 2} \mathbf{U} = \mathbf{U}_y \boldsymbol{\Sigma}_y \mathbf{V}_y^\top$. Fix parameter $\beta \in (0,1]$ such that $\xi_i(\mathbf{U}) \geq \beta\, \xi_i(\mathbf{U}_y)$ for all $i \in \{N\} $, where $\xi_i(\mathbf{A}) = s^{-1} \ell_i(\mathbf{A}) $ and $ \ell_i(\mathbf{A})$ is the leverage score of the $i$-th row of $\mathbf{A}$. For $t = 1,\dots,\nu$ and $i=1, \ldots,N$ form independent diagonal sketches $\mathbf{S}_t$ with entries $\mathrm{diag}\bigl ( \gamma_i^{(t)} / \sqrt{\eta_i} \bigr )$ from independent Bernoulli variables $ \gamma_i^{(t)}$ with probabilities $\eta_i = \min\{ 1,\, c\, \xi_i(\mathbf{U}) \}$. Let $\bar{\mathbf{S}} = \frac 1 \nu \sum_{t=1}^\nu \mathbf{S}_t $, then for any $ 0 < \epsilon < 1 $,
\begin{equation}\label{concentration}
\mathbb{P} \bigl ( \bigl \| \mathbf{U}_y^\top \bar{\mathbf{S}}^\top \bar{\mathbf{S}}\, \mathbf{U}_y - \mathbf{I} \bigr \| \geq \epsilon \bigr ) \leq 2s \exp\!\left( - \frac{3 \nu c \beta\, \epsilon^2}{6s + 2s \epsilon} \right).
\end{equation}
Equivalently, to ensure $\bigl \| \mathbf{U}_y^\top \bar{\mathbf{S}}^\top \bar{\mathbf{S}}\, \mathbf{U}_y - \mathbf{I} \big\| < \epsilon $ with probability at least $1 - \delta$, it suffices that
$$
c \geq \; \frac{6s + 2s \epsilon}{3 \nu \beta\, \epsilon^2}\; \log\!\frac{2s}{\delta}.
$$
Moreover, for $ \bar{\mathbf{Y}}(\mathbf{P}) = \frac 1 \nu \sum_{t=1}^\nu \mathbf{U}^\top \mathbf{P}^{\frac 1 2} \mathbf{S}_t^\top \mathbf{S}_t \mathbf{P}^{\frac 1 2} \mathbf{U}$, we have $\bar{\mathbf{Y}}(\mathbf{P}) \succ 0$ if $ \bigl \| \mathbf{U}_y^\top \bar{\mathbf{S}}^\top \bar{\mathbf{S}}\, \mathbf{U}_y - \mathbf{I} \bigr \| < 1$.
\end{theorem}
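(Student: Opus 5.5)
The proof I would give applies the matrix Bernstein inequality (Tropp's version for sums of independent, zero-mean, bounded Hermitian matrices) to the centred multi-sketch error. First I will fix how the averaged sketch is read. Since $\bar{\mathbf{Y}}(\mathbf{P}) = \frac1\nu\sum_t \mathbf{U}^\top\mathbf{P}^{\frac12}\mathbf{S}_t^\top\mathbf{S}_t\mathbf{P}^{\frac12}\mathbf{U}$, the quantity to control is
\begin{equation*}
\frac1\nu\sum_{t=1}^\nu \mathbf{U}_y^\top\mathbf{S}_t^\top\mathbf{S}_t\mathbf{U}_y,
\end{equation*}
and I will interpret $\mathbf{U}_y^\top\bar{\mathbf{S}}^\top\bar{\mathbf{S}}\mathbf{U}_y$ in this averaged-Gram sense. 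This is an interpretive choice: a literal product of averaged diagonals $\bar{\mathbf{S}}^\top\bar{\mathbf{S}}$ would not be unbiased and does not match the rate in \eqref{concentration}.

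Writing $\mathbf{u}_i$ for the $i$-th row of $\mathbf{U}_y$ as a column vector, and using $\sum_i \mathbf{u}_i\mathbf{u}_i^\top=\mathbf{I}$, the error decomposes as
\begin{equation*}
\frac1\nu\sum_{t}\mathbf{U}_y^\top\mathbf{S}_t^\top\mathbf{S}_t\mathbf{U}_y-\mathbf{I}=\sum_{t=1}^\nu\sum_{i=1}^N \mathbf{X}_{ti},\qquad \mathbf{X}_{ti}=\frac1\nu\Bigl(\frac{\gamma_i^{(t)}}{\eta_i}-1\Bigr)\mathbf{u}_i\mathbf{u}_i^\top .
\end{equation*}
The $\mathbf{X}_{ti}$ are independent, symmetric, of size $s\times s$, and have zero mean since $\mathbb{E}[\gamma_i^{(t)}]=\eta_i$.

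The key step is bounding both Bernstein parameters with the oblivious-sampling constant $\beta$. If $\eta_i=1$ then $\mathbf{X}_{ti}=0$, so it suffices to treat $\eta_i=c\,\xi_i(\mathbf{U})\ge c\beta\,\ell_i(\mathbf{U}_y)/s$.

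\emph{Uniform bound.} Since $\|\mathbf{u}_i\mathbf{u}_i^\top\|=\ell_i(\mathbf{U}_y)$, we get $\|\mathbf{X}_{ti}\|\le \frac1\nu\max\{1,1/\eta_i-1\}\ell_i(\mathbf{U}_y)$. The branch $\ell_i(\mathbf{U}_y)/\eta_i$ is at most $s/(c\beta)$. The branch $\ell_i(\mathbf{U}_y)$ is at most $1$, which is dominated by $s/(c\beta)$ in the regime $c\beta\le s$; otherwise I would absorb it by a minor adjustment of constants. This gives $L=s/(\nu c\beta)$.

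\emph{Variance.} We have $\mathbb{E}\mathbf{X}_{ti}^2=\nu^{-2}(1/\eta_i-1)\ell_i(\mathbf{U}_y)\,\mathbf{u}_i\mathbf{u}_i^\top\preceq \nu^{-2}\frac{s}{c\beta}\mathbf{u}_i\mathbf{u}_i^\top$. Summing over $i$ gives $\frac{s}{\nu^2c\beta}\mathbf{I}$, and summing over the $\nu$ values of $t$ gives $\sigma^2\le s/(\nu c\beta)$.

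Matrix Bernstein then yields
\begin{equation*}
\mathbb{P}\bigl(\|\textstyle\sum\mathbf{X}_{ti}\|\ge\epsilon\bigr)\le 2s\exp\Bigl(-\frac{\epsilon^2/2}{\sigma^2+L\epsilon/3}\Bigr)=2s\exp\Bigl(-\frac{3\nu c\beta\epsilon^2}{6s+2s\epsilon}\Bigr),
\end{equation*}
which is exactly \eqref{concentration}. Setting the right-hand side $\le\delta$ and solving for $c$ gives the stated sample-size condition.

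For the final claim, write $\bar{\mathbf{Y}}(\mathbf{P})=\mathbf{V}_y\boldsymbol{\Sigma}_y\bigl(\frac1\nu\sum_t\mathbf{U}_y^\top\mathbf{S}_t^\top\mathbf{S}_t\mathbf{U}_y\bigr)\boldsymbol{\Sigma}_y\mathbf{V}_y^\top$. Here $\boldsymbol{\Sigma}_y\succ0$ because $\mathbf{P}\succ0$ and $\mathbf{U}$ has full column rank, and $\mathbf{V}_y$ is orthogonal. If the middle factor lies within spectral distance $<1$ of $\mathbf{I}$, Weyl's inequality puts all of its eigenvalues in $(0,2)$, so it is positive definite, and the congruence preserves this.

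I expect the main obstacle to be the uniform bound $L$, in particular the $\gamma=0$ branch together with indices where $\eta_i$ is capped at $1$, and making sure the $\beta$ hypothesis, which relates leverage scores of $\mathbf{U}$ to those of $\mathbf{U}_y$, is exactly what converts $\ell_i(\mathbf{U}_y)/\eta_i$ into $s/(c\beta)$. The interpretation of $\bar{\mathbf{S}}$ is the other point that needs care.
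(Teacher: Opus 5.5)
Your proof follows the paper's. You read $\mathbf{U}_y^\top\bar{\mathbf{S}}^\top\bar{\mathbf{S}}\mathbf{U}_y$ as the averaged Gram matrix, which the appendix also uses implicitly. You use the same rank-one decomposition into $\frac{1}{\nu}(\gamma_i^{(t)}/\eta_i-1)\mathbf{u}_i\mathbf{u}_i^\top$, and you apply matrix Bernstein with $L=\sigma^2=s/(\nu c\beta)$. Your variance step, which sums $\mathbf{u}_i\mathbf{u}_i^\top$ to $\mathbf{I}$ before taking norms, is cleaner than the appendix's version. For the positive-definiteness claim you argue differently. Your congruence $\bar{\mathbf{Y}}(\mathbf{P})=\mathbf{V}_y\boldsymbol{\Sigma}_y(\cdot)\boldsymbol{\Sigma}_y\mathbf{V}_y^\top$ ties positive definiteness directly to the stated hypothesis. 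The paper instead goes through $\bar{\mathbf{Y}}(\mathbf{I})\succ 0$ and the fact that positive row scaling does not change which rows are sampled.

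The $\gamma_i^{(t)}=0$ branch of your uniform bound is left open, and it needs closing rather than hedging. Adjusting constants would prove a weaker statement than the one claimed, since the theorem fixes the exponent explicitly. Moreover, the regime $c\beta>s$ you set aside is exactly the one the theorem's own sample-size condition forces for small $\nu$: for $\nu=1$ it gives $c\beta\ge 2s\log(2s/\delta)/\epsilon^2>s$. The fix uses only what you already have. Since $\eta_i\le 1$ and $\gamma_i^{(t)}\in\{0,1\}$, both branches satisfy $|\gamma_i^{(t)}/\eta_i-1|\le 1/\eta_i$. Hence for every uncapped index $\|\mathbf{X}_{ti}\|\le \ell_i(\mathbf{U}_y)/(\nu\eta_i)\le s/(\nu c\beta)$. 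Equivalently, the $\beta$ hypothesis gives $\ell_i(\mathbf{U}_y)\le s\eta_i/(c\beta)\le s/(c\beta)$. With this, $L=s/(\nu c\beta)$ holds with no regime restriction and your proof is complete. The paper glosses over the same branch: it simply asserts that the maximum is attained at $\gamma=1$ with $\eta_i<\tfrac12$.
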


\begin{proof}
Deferred to \ref{Proofthm5.2}.
\end{proof}

This theorem affirms that the probability of the sketching error exceeding some small threshold improves linearly with more samples from either of $c$ or $\nu$, and on the other hand it degrades linearly with larger $s$ and a smaller parameter-leverage mismatch $\beta$. As we have no control on $\beta$ with parameter-oblivious sampling, the following corollary suggests that in choosing $c \nu$ big enough offsets $\beta < 1$ while making invertibility overwhelmingly likely.  

\begin{corollary}
Under the assumptions of Theorem \ref{concentave},
\begin{equation}\label{col2}
\mathbb{P} \bigl (\bar{\mathbf{Y}}(\mathbf{P}) \not\succ 0 \bigr ) \leq 2s \exp\!\left( - \frac{3 \nu c \beta}{8s} \right).
\end{equation}
In particular, $\mathbb{P}\bigl ( \bar{\mathbf{Y}}(\mathbf{P}) \succ 0 \bigr ) \geq 1 - \delta^\star $ whenever
\begin{equation}\label{col2b}
c \;\geq \frac{8s}{3 \nu \beta}\, \log\!\frac{2s}{\delta^\star} \,.
\end{equation}
\end{corollary}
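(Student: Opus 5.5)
The corollary follows from Theorem \ref{concentave} by specialising $\epsilon = 1$, combined with the invertibility statement at the end of that theorem. The theorem shows that $\bar{\mathbf{Y}}(\mathbf{P}) \succ 0$ whenever $\|\mathbf{U}_y^\top \bar{\mathbf{S}}^\top \bar{\mathbf{S}}\,\mathbf{U}_y - \mathbf{I}\| < 1$. Taking the contrapositive gives the event inclusion
$$
\bigl\{\bar{\mathbf{Y}}(\mathbf{P}) \not\succ 0\bigr\} \subseteq \bigl\{\|\mathbf{U}_y^\top \bar{\mathbf{S}}^\top \bar{\mathbf{S}}\,\mathbf{U}_y - \mathbf{I}\| \geq 1\bigr\},
$$
so $\mathbb{P}(\bar{\mathbf{Y}}(\mathbf{P}) \not\succ 0)$ is bounded by the probability of the right-hand event.

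The tail bound \eqref{concentration} is stated for $0<\epsilon<1$, so it cannot be evaluated directly at $\epsilon = 1$. I would use a limiting argument instead. For every $\epsilon<1$, the event $\{\|\cdot\|\ge 1\}$ is contained in $\{\|\cdot\|\geq \epsilon\}$. Hence for every $\epsilon \in (0,1)$,
$$
\mathbb{P}(\|\cdot\|\ge 1) \le 2s\exp\bigl(-3\nu c\beta\epsilon^2/(6s+2s\epsilon)\bigr).
$$
The right-hand side is continuous in $\epsilon$, so letting $\epsilon\uparrow 1$ gives $2s\exp(-3\nu c\beta/(8s))$, because $6s+2s = 8s$. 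This is \eqref{col2}.

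Alternatively, the matrix Bernstein argument used in the proof of Theorem \ref{concentave} (\ref{Proofthm5.2}) almost certainly holds verbatim for $\epsilon \geq 1$, and one could simply note this. The limiting argument, however, avoids reopening that proof.

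For \eqref{col2b}, I would require $2s\exp(-3\nu c\beta/(8s)) \le \delta^\star$. Taking logarithms and rearranging, this is equivalent to $c \ge \frac{8s}{3\nu\beta}\log\frac{2s}{\delta^\star}$, which gives the stated sufficient condition.

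The only delicate point is the boundary value $\epsilon=1$ lying outside the theorem's stated range, and the monotone-limit argument above handles it. Everything else is direct substitution.
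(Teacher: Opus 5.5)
Your proposal is correct and follows the paper's proof, which simply sets $\epsilon = 1$ in \eqref{concentration} and uses the invertibility clause of Theorem \ref{concentave}. Your limit $\epsilon \uparrow 1$ is a valid and more careful way to handle the theorem being stated only for $0<\epsilon<1$; as you note, the underlying matrix Bernstein bound holds for every $\epsilon \geq 0$, so the paper's direct substitution is also legitimate.
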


\begin{proof}
The probability $\delta^\star$ can be computed from \eqref{concentration} by setting $\epsilon = 1$.
\end{proof}

In deciding an appropriate pair of values for $c$ and $\nu$, particularly under resource constraints, we may consider reducing the per-sketch sampling budget to take advantage of the rank accumulation in the average. For reasons that will become apparent in the next section where we discuss variance reduction, we choose to keep the per-sketch sample budget $c$ around $\mathcal{O}(s \log s)$ irrespectively of $\beta$ and use the ensemble size $\nu$ to increase the overall sample size. The following proposition provides invertibility guarantees for the average of positive semidefinite (PSD) sketches under the $c < \frac{8s}{3 \nu \beta} \log \bigl ( \frac{2s}{\delta^\star} \bigr )$ regime. 

\begin{proposition}
\label{smallc}
Let $\hat{\mathbf{Y}}_1(\mathbf{P}), \ldots, \hat{\mathbf{Y}}_\nu(\mathbf{P})$ be iid PSD sketches based on \eqref{invsketch} with $s < c < \frac{8s}{3 \nu \beta} \log \bigl ( \frac{2s}{\delta^\star} \bigr )$,  and $\mathrm{rank}\bigl (\hat{\mathbf{Y}}_t(\mathbf{P}) \bigr ) \leq r$ for all $t$. Then for $\bar{\mathbf{Y}}(\mathbf{P}) = \frac 1 \nu \sum_{t=1}^{\nu} \hat{\mathbf{Y}}_t(\mathbf{P})$ we have
$$
\mathrm{rank}\bigl (\bar{\mathbf{Y}}(\mathbf{P})\bigr ) \leq \min \bigl \{s, \nu r \bigr \}.
$$
If $\nu r < s$, then $\bar{\mathbf{Y}}(\mathbf{P})$ is singular with probability 1. Conversely, $\bar{\mathbf{Y}}(\mathbf{P}) \succ 0$ if the union of sampled indices across the $\nu$ sketches contains some fixed spanning set $J_\star \subset \{N\}$ of dimension $s$, in which case \begin{equation}
\mathbb{P} \bigl ( \bar{\mathbf{Y}}(\mathbf{P}) \succ 0) \geq \prod_{i\in J_\star}\bigl ( \,1-(1-\eta_i)^{\nu}\,\bigr )
\;\geq \;
1 - \sum_{i\in J_\star}(1-\eta_i)^{\nu}.
\end{equation}
\end{proposition}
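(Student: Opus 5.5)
The proof rests on the fact that every sketch is a sum of rank-one PSD terms over its sampled rows, and all three claims follow from this.

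\textbf{Step 1: the representation.} Write $\mathbf{B} \doteq \mathbf{P}^{\frac 1 2}\mathbf{U}$ with rows $\mathbf{b}_{(i)}$. Then
$$
\hat{\mathbf{Y}}_t(\mathbf{P}) = \sum_{i=1}^N \frac{\gamma_i^{(t)}}{\eta_i}\, \mathbf{b}_{(i)}^\top \mathbf{b}_{(i)},
$$
so $\hat{\mathbf{Y}}_t(\mathbf{P})$ is PSD. Its range is the span of $\{\mathbf{b}_{(i)}^\top : \gamma_i^{(t)}=1\}$.

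\textbf{Step 2: the rank bound.} Rank is subadditive, so $\mathrm{rank}(\bar{\mathbf{Y}}) \leq \sum_t \mathrm{rank}(\hat{\mathbf{Y}}_t) \leq \nu r$. Since $\bar{\mathbf{Y}}$ is $s\times s$, its rank is also at most $s$, giving the bound $\min\{s,\nu r\}$. If $\nu r<s$, then $\bar{\mathbf{Y}}$ is rank deficient. This holds for every realisation of the sketches satisfying the rank hypothesis, so $\bar{\mathbf{Y}}$ is singular with probability $1$.

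\textbf{Step 3: the converse.} The key identity for a sum of PSD matrices is
$$
\mathbf{x}^\top\bar{\mathbf{Y}}\mathbf{x} = \frac1\nu \sum_t \sum_{i:\gamma_i^{(t)}=1} \eta_i^{-1}(\mathbf{b}_{(i)}\mathbf{x})^2 .
$$
Suppose $\mathbf{x}^\top\bar{\mathbf{Y}}\mathbf{x}=0$. Then every term vanishes, so $\mathbf{b}_{(i)}\mathbf{x}=0$ for all $i$ in the union of the sampled index sets. If that union contains $J_\star$, a set of $s$ indices whose rows $\{\mathbf{b}_{(i)}\}_{i\in J_\star}$ span $\mathds{R}^s$, then $\mathbf{x}=0$. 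Hence $\bar{\mathbf{Y}}\succ0$. Such a $J_\star$ exists because $\mathbf{B}$ has full column rank $s$: $\mathbf{P}\succ0$ and $\mathbf{U}$ has orthonormal columns. The spanning property is unaffected by the positive scaling $\mathbf{P}^{\frac 1 2}$, so $J_\star$ can be fixed from $\mathbf{U}$ alone.

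\textbf{Step 4: the probability bound.} The event $\{J_\star\subset\bigcup_t\{i:\gamma_i^{(t)}=1\}\}$ equals $\bigcap_{i\in J_\star} E_i$, where $E_i$ is the event that index $i$ is sampled in at least one sketch. Across sketches the $\gamma_i^{(t)}$ are iid Bernoulli$(\eta_i)$, so $\mathbb{P}(E_i)=1-(1-\eta_i)^\nu$. The $E_i$ depend on disjoint families of independent Bernoulli variables, so they are independent and the probability of the intersection is the product. Monotonicity then gives the first inequality. The second follows from the Weierstrass product inequality $\prod(1-a_i)\ge 1-\sum a_i$ for $a_i\in[0,1]$, which is also the union bound on the complements $E_i^c$.

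\textbf{Main obstacle.} Each step is elementary; the only real care needed is in the bookkeeping. The events $E_i$ must be seen to be independent, and $J_\star$ must be fixed in advance and independently of the sampling. Spanning sets of the sampled rows chosen after sampling would break the product formula. The hypothesis $s<c<\frac{8s}{3\nu\beta}\log(2s/\delta^\star)$ plays no role in the logic. It only locates the regime where the Corollary's bound is inactive.
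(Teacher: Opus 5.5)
Your proposal is correct and follows essentially the same route as the paper: rank subadditivity gives the bound, and the fact that the null space of a sum of rank-one PSD terms is the intersection of their null spaces gives positive definiteness (your quadratic-form argument is the same fact). Independence across sketches and rows then gives the product formula. Beyond the paper, you also justify the final inequality $\prod_{i\in J_\star}(1-a_i)\geq 1-\sum_{i\in J_\star}a_i$, which the paper leaves implicit, and your representation keeps the weights $\gamma_i^{(t)}/\eta_i$ that the paper's rank-one expansion drops.
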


\begin{proof}
Deferred to \ref{Proofprop521}.  
\end{proof}

We conclude our analysis on $\bar{\mathbf{Y}}(\mathbf{P})$ by examining its bias and variance. From \eqref{sdiag}, since $\boldsymbol{\eta} \succ 0$
$$
\mathbb{E}[\mathbf{S}^\top \mathbf{S}] = \mathbb{E}\bigl [ \mathrm{diag} (\boldsymbol{\gamma} \circ \boldsymbol{\gamma} \circ \boldsymbol{\eta}^{-1}) \bigr ] = \mathbb{E} \bigl [ \mathrm{diag}(\boldsymbol{\gamma} \circ \boldsymbol{\eta}^{-1}) \bigr ] = \mathrm{diag}\bigl ( \mathbb{E}[\boldsymbol{\gamma}] \circ \boldsymbol{\eta}^{-1} \bigr ) = \mathbf{I},
$$
where  $\boldsymbol{\eta}^{-1}$is the vector of the reciprocals of the non-zero Bernoulli probabilities. In effect, 
\begin{equation}
\mathbb{E}[\bar{\mathbf{Y}}(\mathbf{P})] = \mathbb{E}\Bigl [ \frac 1 \nu \sum_{t=1}^{\nu} \hat{\mathbf{Y}}_t(\mathbf{P}) \Bigr ] = \frac 1 \nu \sum_{t=1}^\nu \mathbf{U}^\top \mathbf{P}^{\frac 1 2} \mathbb{E} \bigl [ \mathbf{S}_t^\top \mathbf{S}_t \bigr ] \mathbf{P}^{\frac 1 2} \mathbf{U} = \mathbf{Y}(\mathbf{P}),   
\end{equation}
and thus $\bar{\mathbf{Y}}(\mathbf{P})$ is unbiased. For the total variance of the sketch, we are essentially looking for the sum of the element-wise variances as
\begin{equation}
\mathbb{V}(\bar{\mathbf{Y}}(\mathbf{P})) \doteq \sum_{h,k=1}^{s} \mathbb{V} (\bar{y}_{hk}(\mathbf{P})) = \mathbb{E} \Bigl [ \bigl \|\bar{\mathbf{Y}}(\mathbf{P}) - \mathbf{Y}(\mathbf{P})  \bigr \|_\mathrm{F}^2 \Bigr ], 
\end{equation}
but before deriving its expression we state the following remark that exploits the symmetry of the sketch.

\begin{remark}\label{varcovar}
Let $1 \leq h\neq k \leq s$ be two distinct indices. The variance of the off-diagonal entry $\bar{y}_{hk}(\mathbf{P})$ of the  multi-sketch average estimator is equal to the covariance of the diagonal entries $\bar{y}_{hh}(\mathbf{P})$ and $\bar{y}_{kk}(\mathbf{P})$,
\begin{equation}\label{lemma2}
\mathbb{V}\bigl (\bar{y}_{hk}(\mathbf{P}) \bigr ) = \mathbb{C} \bigl (\bar{y}_{hh}(\mathbf{P}),\bar{y}_{kk}(\mathbf{P}) \bigr ). 
\end{equation}
\end{remark}

\begin{proof}
Deferred to \ref{Proofrem53}.    
\end{proof}

\begin{lemma}\label{VarYbarP}
Under the setup of theorem \ref{concentave}, the Frobenius-variance of the average sketch is
\begin{equation}
\mathbb{V}(\bar{\mathbf{Y}}(\mathbf{P})) = \frac 1 \nu \sum_{i=1}^N \Bigl ( \frac{1}{\eta_i} - 1 \Bigr ) p_{ii}^2 \ell_i(\mathbf{U})^2.     
\end{equation}
If, additionally, $\eta_i = \tfrac{c}{s} \ell_i(\mathbf{U}) $ with $\eta_i < \frac 1 2$ for all $i \in \{N\}$, then
\begin{equation}\label{Frob_conc}
\mathbb{V}(\bar{\mathbf{Y}}(\mathbf{P})) \leq  \frac 1 \nu \sum_{i=1}^N p_{ii} \Bigl ( \frac{s}{c} - \ell_i(\mathbf{U}) \Bigr ) \ell_i(\mathbf{U})
,
\quad \text{and} \quad
\mathbb{P}\!\left( \| \bar{\mathbf{Y}}(\mathbf{P}) - \mathbf{Y}(\mathbf{P}) \|_\mathrm{F} \geq \epsilon \right) \;\leq \; \frac{s}{\nu c \epsilon^2} \sum_{i=1}^N p_{ii}^2 \ell_i(\mathbf{U}).
\end{equation}
\end{lemma}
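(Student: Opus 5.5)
The plan is to compute the Frobenius variance exactly from the entrywise representation of a single sketch, pass to the $\nu$-average using independence, and then specialise to the proportional probabilities and apply Chebyshev's inequality.

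\emph{Single-sketch representation.} Writing $\mathbf{u}_{(i)}$ for the $i$-th row of $\mathbf{U}$, a single sketch \eqref{invsketch} is
$$\hat{\mathbf{Y}}(\mathbf{P})=\sum_{i=1}^N \frac{\gamma_i}{\eta_i}\,p_{ii}\,\mathbf{u}_{(i)}^\top\mathbf{u}_{(i)},$$
so that
$$\hat{\mathbf{Y}}(\mathbf{P})-\mathbf{Y}(\mathbf{P})=\sum_{i=1}^N X_i\,p_{ii}\,\mathbf{u}_{(i)}^\top\mathbf{u}_{(i)},\qquad X_i\doteq\frac{\gamma_i}{\eta_i}-1 .$$
The $X_i$ are independent with $\mathbb{E}[X_i]=0$ and $\mathbb{E}[X_i^2]=\frac{1}{\eta_i}-1$. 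This holds even when $\eta_i=1$, in which case $X_i\equiv0$.

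\emph{Exact variance.} I will expand $\mathbb{E}\|\hat{\mathbf{Y}}-\mathbf{Y}\|_\mathrm{F}^2$ as a double sum over $i,j$ of $\mathbb{E}[X_iX_j]\,p_{ii}p_{jj}\,\langle \mathbf{u}_{(i)}^\top\mathbf{u}_{(i)},\mathbf{u}_{(j)}^\top\mathbf{u}_{(j)}\rangle_\mathrm{F}$. By independence and zero mean, all cross terms $i\neq j$ vanish. The diagonal terms use $\|\mathbf{u}_{(i)}^\top\mathbf{u}_{(i)}\|_\mathrm{F}^2=\|\mathbf{u}_{(i)}\|^4=\ell_i(\mathbf{U})^2$ from Definition \ref{Leverages}. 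This gives the single-sketch variance
$$\sum_{i=1}^N\Bigl(\frac1{\eta_i}-1\Bigr)p_{ii}^2\,\ell_i(\mathbf{U})^2 .$$
Since the $\nu$ sketches are iid and unbiased, the Frobenius variance of their average is $1/\nu$ times this, because the cross-sketch covariances vanish. That yields the first identity. Remark \ref{varcovar} is not needed, as the Frobenius-norm computation already accounts for all off-diagonal entries.

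\emph{Proportional probabilities.} When $\eta_i=\frac cs\ell_i(\mathbf{U})<\frac12\le1$, the minimum in \eqref{Berndef} is inactive, so substituting is legitimate. Each summand becomes
$$\Bigl(\frac{s}{c\,\ell_i}-1\Bigr)p_{ii}^2\ell_i^2=p_{ii}^2\Bigl(\frac sc-\ell_i\Bigr)\ell_i ,$$
and the factor $\frac sc-\ell_i$ is positive precisely because $\eta_i<1$. The stated bound carries $p_{ii}$ rather than $p_{ii}^2$. I expect this to be the main obstacle: I would obtain it by invoking the boundedness of the admissible parameters under the normalisation $p_{ii}\le1$, so that $p_{ii}^2\le p_{ii}$. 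Without that normalisation the natural statement keeps $p_{ii}^2$, which is what the computation actually delivers, and I would flag this.

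\emph{Tail bound.} For the tail bound I will apply Chebyshev's (Markov's) inequality to $\|\bar{\mathbf{Y}}-\mathbf{Y}\|_\mathrm{F}^2$:
$$\mathbb{P}\bigl(\|\bar{\mathbf{Y}}(\mathbf{P})-\mathbf{Y}(\mathbf{P})\|_\mathrm{F}\ge\epsilon\bigr)\le\frac{\mathbb{V}(\bar{\mathbf{Y}}(\mathbf{P}))}{\epsilon^2}.$$
I then use the exact expression with $p_{ii}^2$ and drop the negative term $-\ell_i^2$, which gives
$$\mathbb{V}(\bar{\mathbf{Y}}(\mathbf{P}))\le\frac{s}{\nu c}\sum_{i=1}^N p_{ii}^2\,\ell_i(\mathbf{U}).$$
This is exactly the claimed probability bound, and it requires no assumption on the size of $p_{ii}$.
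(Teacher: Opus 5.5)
Your proof is correct for the version of the bound that the paper's own proof actually establishes. It reaches the paper's formula by a somewhat different route.

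\textbf{Comparison of routes.} The paper splits $\sum_{h,k}\mathbb{V}(\bar y_{hk})$ into diagonal and off-diagonal entries. It then uses Remark~\ref{varcovar}, $\mathbb{V}(\bar y_{hk})=\mathbb{C}(\bar y_{hh},\bar y_{kk})$, to identify the Frobenius variance with a scalar variance, $\mathbb{V}(\mathrm{Tr}\,\bar{\mathbf{Y}}(\mathbf{P}))=\frac1\nu\mathbb{V}\bigl(\sum_i \frac{\gamma_i}{\eta_i}p_{ii}\ell_i(\mathbf{U})\bigr)$, and evaluates that by independence. You instead expand $\mathbb{E}\|\hat{\mathbf{Y}}-\mathbf{Y}\|_\mathrm{F}^2$ directly over the independent, centred rank-one summands, using $\|\mathbf{u}_{(i)}^\top\mathbf{u}_{(i)}\|_\mathrm{F}^2=\ell_i(\mathbf{U})^2$.

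Your version is shorter and avoids the entrywise covariance bookkeeping. It also carries over verbatim to $\bar{\mathbf{Y}}(\mathbf{P})-\bar{\mathbf{Y}}(\mathbf{T})$ (replace $p_{ii}$ by $p_{ii}-t_{ii}$) and to the rectangular $\bar{\mathbf{Z}}(\mathbf{P})$. The paper's version makes explicit the identity $\mathbb{V}(\bar{\mathbf{Y}})=\mathbb{V}(\mathrm{Tr}\,\bar{\mathbf{Y}})$, which it reuses in the control-variates objective \eqref{vardif}. Your route explains that identity at once, because $\|\mathbf{x}^\top\mathbf{x}\|_\mathrm{F}=\mathrm{Tr}(\mathbf{x}^\top\mathbf{x})$ for a row vector $\mathbf{x}$ and cross terms vanish either way. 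Your Markov step for the tail bound is the same as the paper's.

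\textbf{The $p_{ii}$ versus $p_{ii}^2$ issue.} Drop the normalisation $p_{ii}\le 1$. It is not among the hypotheses: admissible $\mathbf{P}$ is only required to be positive and bounded, and the experiments use values up to $10^2$. Without it, the displayed bound with $p_{ii}$ is false, as the following shows.

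\begin{itemize}
\item With $\eta_i=\frac cs\ell_i(\mathbf{U})$, your computation gives an equality, $\mathbb{V}(\bar{\mathbf{Y}}(\mathbf{P}))=\frac1\nu\sum_i p_{ii}^2\bigl(\frac sc-\ell_i(\mathbf{U})\bigr)\ell_i(\mathbf{U})$.
\item Every factor $\frac sc-\ell_i(\mathbf{U})$ is positive, so this is homogeneous of degree two under $\mathbf{P}\mapsto\alpha\mathbf{P}$, while the stated right-hand side has degree one.
\item Hence $\mathbf{P}=\alpha\mathbf{I}$ with any $\alpha>1$ violates the stated inequality.
\end{itemize}

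The paper's own proof in fact ends with $p_{ii}^2$. The stated tail bound, which carries $p_{ii}^2$, is exactly what the $p_{ii}^2$ version yields once the $-\ell_i^2$ term is dropped. So the $p_{ii}$ in the statement is a typo. Prove the $p_{ii}^2$ bound, as you propose, rather than trying to recover the literal form.
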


\begin{proof}
Deferred to \ref{Prooflem54}.    
\end{proof}

From \eqref{Frob_conc} notice that the tail concentration of the Frobenius norm variance effectively relies on the alignment between the squared parameters vector and the leverage scores of $\mathbf{U}$. For heterogeneous parameter fields whose mass is concentrated in regions of the domain that coincide topologically with the largest leverage scores then the variance and its tail probability grow. Before addressing the right hand side sketch, for completeness, and to aid comparison with the asymmetric $\bar{\mathbf{Z}}(\mathbf{P})$ (c.f. theorem \ref{thm:bernstein-spectral-ZP}), we also provide the tail concentration bound for the spectral norm of the sketching error in $\bar{\mathbf{Y}}(\mathbf{P})$, even though we are only concerned in the error of its inverse as in theorem \ref{concentave}. 

\begin{theorem}\label{theoremYPerr}

Let $\bar{\mathbf{Y}}(\mathbf{P}) $ be the $ s \times s$ matrix average of $\nu$ iid sketches for the same parameters $ \mathbf{P}$ and $0 < \epsilon < 1$ then
$$
\mathbb{P} \bigl ( \bigl \|\bar{\mathbf{Y}}(\mathbf{P}) - \mathbf{Y}(\mathbf{P}) \bigr \| \geq \epsilon \bigr ) \leq 2 s \exp \Bigl (- \frac{\epsilon^2/2}{\|\mathbf{V}_{\bar{\mathbf{Y}}}\| + L \epsilon/3} \Bigr ), 
$$
where 
$$
\|\mathbf{V}_{\bar{\mathbf{Y}}}\| = \frac 1 \nu \Bigl \| \sum_{i=1}^N p_{ii}^2 \Bigl ( \frac{1}{\eta_i} - 1 \Bigr ) \ell_i(\mathbf{U}) \mathbf{u}_{(i)}^\top \mathbf{u}_{(i)} \Bigr \| \quad \text{and} \quad L = \frac{s}{c}\|\mathbf{P}\|_\infty.
$$
\end{theorem}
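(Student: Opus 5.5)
Throughout, $\mathbf{u}_{(i)}$ denotes the $i$th row of $\mathbf{U}$ and we use $\|\mathbf{u}_{(i)}\|^2 = \ell_i(\mathbf{U})$. The plan is to apply the matrix Bernstein inequality for sums of independent, zero-mean, symmetric random matrices to the decomposition of $\bar{\mathbf{Y}}(\mathbf{P}) - \mathbf{Y}(\mathbf{P})$ into rank-one terms. We use $\mathbf{S}_t^\top\mathbf{S}_t = \mathrm{diag}(\gamma^{(t)}_i/\eta_i)$ and the row-wise expansion $\mathbf{Y}(\mathbf{P}) = \sum_i p_{ii}\mathbf{u}_{(i)}^\top \mathbf{u}_{(i)}$. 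This gives
\begin{equation*}
\bar{\mathbf{Y}}(\mathbf{P}) - \mathbf{Y}(\mathbf{P}) = \sum_{t=1}^{\nu}\sum_{i=1}^N \mathbf{X}_{i,t}, \qquad \mathbf{X}_{i,t} \doteq \frac{1}{\nu}\Bigl(\frac{\gamma_i^{(t)}}{\eta_i} - 1\Bigr) p_{ii}\, \mathbf{u}_{(i)}^\top \mathbf{u}_{(i)} .
\end{equation*}
The $\nu N$ summands are independent, symmetric $s\times s$ matrices with $\mathbb{E}[\mathbf{X}_{i,t}]=\mathbf{0}$, since $\mathbb{E}[\gamma_i^{(t)}]=\eta_i$ (this is the unbiasedness computation already displayed before Remark \ref{varcovar}). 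Indices with $\eta_i = 1$ contribute identically zero and can be dropped.

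\textbf{Variance proxy.} Since $(\mathbf{u}_{(i)}^\top\mathbf{u}_{(i)})^2 = \ell_i(\mathbf{U})\,\mathbf{u}_{(i)}^\top\mathbf{u}_{(i)}$ and $\mathbb{E}[(\gamma/\eta - 1)^2] = 1/\eta - 1$, we get
\begin{equation*}
\sum_{t,i}\mathbb{E}[\mathbf{X}_{i,t}^2] = \frac{\nu}{\nu^2}\sum_{i=1}^N p_{ii}^2\Bigl(\frac{1}{\eta_i}-1\Bigr)\ell_i(\mathbf{U})\,\mathbf{u}_{(i)}^\top\mathbf{u}_{(i)} .
\end{equation*}
Its spectral norm is exactly $\|\mathbf{V}_{\bar{\mathbf{Y}}}\|$ as stated. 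This step is mechanical.

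\textbf{Uniform bound $L$.} This is the step needing care. We have $\|\mathbf{X}_{i,t}\| \le \frac{1}{\nu}\max\{1/\eta_i - 1,\,1\}\, p_{ii}\,\ell_i(\mathbf{U})$.
\begin{itemize}
\item When $\eta_i = c\ell_i(\mathbf{U})/s < 1$, the factor is at most $\frac{1}{\eta_i}\ell_i(\mathbf{U}) = s/c$, so the term is bounded by $\frac{s}{\nu c}\|\mathbf{P}\|_\infty$.
\item The ``$-1$'' branch gives $p_{ii}\ell_i(\mathbf{U})\le \|\mathbf{P}\|_\infty$. This is dominated by $\frac{s}{c}\|\mathbf{P}\|_\infty$ once $\eta_i<1$, because then $\ell_i(\mathbf{U})<s/c$.
\item When $\eta_i=1$ the term vanishes.
\end{itemize}
Hence $L = \frac{s}{c}\|\mathbf{P}\|_\infty$ works, and even improves by $1/\nu$. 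Keeping $L$ without the $1/\nu$ is a valid, weaker choice consistent with the statement. The main obstacle is bookkeeping this case split ($\eta_i<1$ versus $\eta_i=1$, and the sign of $\gamma/\eta - 1$) cleanly, so that a single constant covers all summands.

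\textbf{Conclusion.} Apply the matrix Bernstein inequality (Tropp) in dimension $s$:
\begin{equation*}
\mathbb{P}\Bigl(\bigl\|\textstyle\sum \mathbf{X}_{i,t}\bigr\|\ge\epsilon\Bigr)\le 2s\exp\Bigl(-\frac{\epsilon^2/2}{\|\mathbf{V}_{\bar{\mathbf{Y}}}\|+L\epsilon/3}\Bigr).
\end{equation*}
This is the claimed bound. The restriction $0<\epsilon<1$ is not needed by the Bernstein inequality and is simply inherited from the statement.
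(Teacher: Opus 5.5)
Your proposal is correct and follows the paper's proof in Appendix~\ref{Proofthm55}: the same rank-one decomposition of the sketching error, the same variance statistic via $(\mathbf{u}_{(i)}^\top\mathbf{u}_{(i)})^2=\ell_i(\mathbf{U})\,\mathbf{u}_{(i)}^\top\mathbf{u}_{(i)}$, the same uniform bound $L=\frac{s}{c}\|\mathbf{P}\|_\infty$, and matrix Bernstein in dimension $s$. Your bookkeeping is more careful than the paper's. You sum explicitly over all $\nu N$ summands with the $1/\nu$ factor, and you also treat the $\gamma_i^{(t)}=0$ branch and the indices with $\eta_i=1$, whereas the paper only considers $\gamma_i=1$ with $\eta_i<\frac12$.
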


\begin{proof}
Deferred to \ref{Proofthm55}.
\end{proof}

\subsection{On the right hand side sketch}

We now turn our attention to the multi-sketch average estimator of the rectangular $s \times m$ matrix $\mathbf{Z}(\mathbf{P}) = \mathbf{U}^\top \mathbf{P} \mathbf{U}_{\partial}$ appearing in the parametric right hand side vector of \eqref{projY}. From the definitions \eqref{qPsplit} and \eqref{sketchave} we have a sketching-induced error 
\begin{equation}
\|\bar{\mathbf{q}}(\mathbf{P}) - \mathbf{q}(\mathbf{P}) \| \leq \| \bar{\mathbf{Z}}(\mathbf{P}) - \mathbf{Z}(\mathbf{P}) \| \, \|\boldsymbol{\Sigma}_\partial \mathbf{V}_\partial^\top  \mathbf{u}^\partial \|,
\end{equation}
for which we aim to derive a tail concentration bound for the spectral norm $\|\bar{\mathbf{Z}}(\mathbf{P}) - \mathbf{Z}(\mathbf{P})\|$ and quantify the variance of the estimator. Noting that the multi-sketch estimator is unbiased since $\mathbb{E}[\bar{\mathbf{Z}}(\mathbf{P})] = \mathbb{E} [\hat{\mathbf{Z}}(\mathbf{P})] = \mathbf{Z}(\mathbf{P})$ and $\mathbb{E}[\mathbf{M}^\top \mathbf{M}] = \mathbb{E}\bigl [ \mathrm{diag} (\boldsymbol{\tilde{\gamma}} \circ \boldsymbol{\tilde{\gamma}} \circ \boldsymbol{\tilde{\eta}}^{-1}) \bigr ] = \mathbb{E} \bigl [ \mathrm{diag}(\boldsymbol{\tilde{\gamma}} \circ \boldsymbol{\tilde{\eta}}^{-1}) \bigr ] = \mathrm{diag}\bigl ( \mathbb{E}[\boldsymbol{\tilde{\gamma}}] \circ \boldsymbol{\tilde{\eta}}^{-1} \bigr ) = \mathbf{I}$, then in terms of its variance we have  
\begin{align*}
\mathbb{V}\bigl ( \bar{\mathbf{Z}}(\mathbf{P}) \bigr ) = \mathbb{E} \bigl [\|\bar{\mathbf{Z}}(\mathbf{P}) - \mathbf{Z}(\mathbf{P}) \|_\mathrm{F}^2 \bigr ] = \sum_{h=1}^s \sum_{k=1}^{m} \mathbb{V} \bigl ( \bar{z}_{hk}(\mathbf{P}) \bigr ), 
\end{align*}
and upon using the independence of $\tilde{\boldsymbol{\gamma}}$ we can compute and bound the Frobenius variance as
\begin{align}\label{VarZP} \nonumber
\mathbb{V}\bigl ( \bar{\mathbf{Z}}(\mathbf{P}) \bigr ) & = \frac{1}{\nu^2} \sum_{t=1}^{\nu} \sum_{h=1}^s \sum_{k=1}^{m} \sum_{i=1}^N \frac{\mathbb{V}(\tilde{\gamma}_i^{(t)})}{{\tilde{\eta}}^2_i} p_{ii}^2 u_{ih}^2 {u_\partial}_{ik}^2 = \frac{1}{\nu^2} \sum_{t=1}^{\nu} \sum_{i=1}^N \frac{\mathbb{V}(\tilde{\gamma}_i^{(t)})}{{\tilde{\eta}}^2_i} p_{ii}^2 \ell_i(\mathbf{U}) \ell_i(\mathbf{U}_\partial)\\ \nonumber
& = \frac 1 \nu \sum_{i=1}^N \Bigl ( \frac{1}{{\tilde{\eta}}_i} - 1 \Bigr ) p_{ii}^2 \ell_i(\mathbf{U}) \ell_i(\mathbf{U}_\partial) \leq \frac 1 \nu \sum_{i=1}^N p_{ii}^2 \Bigl ( \frac{\sum_{j=1}^N \ell_j(\mathbf{U}) \ell_j(\mathbf{U}_\partial)} {c \, \ell_i(\mathbf{U}) \ell_i(\mathbf{U}_\partial)} - 1 \Bigr ) \ell_i(\mathbf{U}) \ell_i(\mathbf{U}_\partial)\\
& = \frac 1 \nu \sum_{i=1}^N p_{ii}^2 \Bigl ( \frac{\sum_{j=1}^N \ell_j(\mathbf{U}) \ell_j(\mathbf{U}_\partial)}{c} - \ell_i(\mathbf{U}) \ell_i(\mathbf{U}_\partial) \Bigr ), 
\end{align}
where the inequality is due to the fact that the variance is maximised when $\tilde{\eta}_i  = c \zeta_i < \frac 1 2$, $\forall i \in \{N\}$ for $\zeta_i = \frac{\ell_i(\mathbf{U}) \ell_i(\mathbf{U}_\partial)}{\sum_{j=1}^N \ell_j(\mathbf{U}) \ell_j(\mathbf{U}_\partial)}$. Similar to the variance of $\bar{\mathbf{Y}}(\mathbf{P})$, notice that the variance of $\bar{\mathbf{Z}}(\mathbf{P})$ scales to the alignment between the squared parameters vector, and those of $\boldsymbol{\ell}(\mathbf{U})$ and $\boldsymbol{\ell}(\mathbf{U}_\partial)$. Empirically, we expect this to be small as variance amplification now depends on the alignment of three vectors. Using Markov's inequality for a small error $0 < \epsilon < 1$ we have
\begin{align}
\mathbb{P} \bigl ( \|\bar{\mathbf{Z}}(\mathbf{P}) - \mathbf{Z}(\mathbf{P})\|_\mathrm{F} \geq \epsilon \bigr ) = \mathbb{P} \bigl ( \|\bar{\mathbf{Z}}(\mathbf{P}) - \mathbf{Z}(\mathbf{P})\|_\mathrm{F}^2 \geq \epsilon^2 \bigr ) = \frac{1}{\epsilon^2} \mathbb{V}\bigl ( \bar{\mathbf{Z}}(\mathbf{P}) \bigr ). 
\end{align}
Before embarking on bounding the spectral norm of the sketching error we must make a remark on the dimensions of the rectangular $s \times m$ matrix $\bar{\mathbf{Z}}(\mathbf{P})$, recalling that $m$ is the number of nodes of the mesh situated on the boundary. Whilst $s$ is assumed to be sufficiently small, e.g. $\mathcal{O}(10^2)$, $m$ can be significantly larger even though still much less than $n$. In turn, proving a concentration bound via Bernstein's inequality for rectangular matrices, will yield a probability bound proportional to $s+m$ making it highly conservative. A sharper bound can be obtained by invoking the intrinsic dimension of the matrix as we show in the following theorem.   

\begin{theorem}
\label{thm:bernstein-spectral-ZP}
Let $\mathbf{U} \in \mathds{R}^{N \times s}$ and $\mathbf{U}_\partial \in \mathds{R}^{N \times m}$ have orthonormal columns. Define $\mathbf{Z}(\mathbf{P}) = \mathbf{U}^\top \mathbf{P} \mathbf{U}_\partial$. For $t = 1,\dots,\nu$ and $i=1,\ldots,N$ form independent diagonal sketches $\mathbf{M}_t$ with entries $\mathrm{diag}\bigl ( \tilde\gamma_i^{(t)} / \sqrt{\tilde\eta_i} \bigr )$ for independent Bernoulli variables $ \tilde\gamma_i^{(t)}$ with probabilities of success $\tilde\eta_i = \min\{1,\, c\, \zeta_i \}$, $\zeta_i \propto \ell_i(\mathbf{U})\, \ell_i(\mathbf{U}_\partial)$ where $\ell_i(\mathbf{U})$ and $\ell_i(\mathbf{U}_{\partial})$ denote the leverage scores of the $i$-th rows of $\mathbf{U}$ and $\mathbf{U}_{\partial}$ respectively. Let $ \bar{\mathbf{Z}}(\mathbf{P}) = \frac 1 \nu \sum_{t=1}^\nu \mathbf{U}^\top \mathbf{P}^{\frac 1 2} \mathbf{M}_t^\top \mathbf{M}_t \mathbf{P}^{\frac 1 2} \mathbf{U}_\partial$. Then for any $ \epsilon > 0$  
\begin{equation}\label{concZP}
 \mathbb{P} \bigl ( \| \bar{\mathbf{Z}}(\mathbf{P}) - \mathbf{Z}(\mathbf{P})\| \,\geq \, \epsilon \bigr )
 \;\leq \;
 2 \frac{\mathrm{Tr}(\mathbf{V}_{\bar{\mathbf{Z}}})}{\|\mathbf{V}_{\bar{\mathbf{Z}}}\|} \,\exp\!\left(
   -\,\frac{\epsilon^2 /2}{\displaystyle
    \|\mathbf{V}_{\bar{\mathbf{Z}}}\|
     + L\,\epsilon / 3 } \right),
\end{equation}
for a symmetric matrix 
$$
\mathbf{V}_{\bar{\mathbf{Z}}} = \frac 1 \nu \begin{bmatrix}
\sum_{i=1}^N p_{ii}^2 \bigl ( \frac{1}{\tilde{\eta}_i} - 1 \bigr ) \ell_i(\mathbf{U}_\partial) \mathbf{u}_{(i)}^\top \mathbf{u}_{(i)} & 0\\
0 & \sum_{i=1}^N p_{ii}^2 \bigl ( \frac{1}{\tilde{\eta}_i} - 1 \bigr ) \ell_i(\mathbf{U}) {\mathbf{u}_\partial}_{(i)}^\top {\mathbf{u}_\partial}_{(i)}
\end{bmatrix}
$$
and
$$ 
L  = \frac{1}{\nu c} \Bigl (\sum_{i=1}^N \ell_i(\mathbf{U})\ell_i(\mathbf{U}_\partial) \Bigr ) \max_i \Bigl \{ \frac{p_{ii}}{\sqrt{\ell_i(\mathbf{U}) \, \ell_i(\mathbf{U}_\partial)}} \Bigr \} 
$$
\end{theorem}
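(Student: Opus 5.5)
The plan is to write the sketching error as a sum of independent, zero-mean, rectangular random matrices. Then I apply the intrinsic-dimension matrix Bernstein inequality (Tropp's version for rectangular matrices, obtained through the Hermitian dilation). Since $\mathbf{M}_t^\top\mathbf{M}_t = \mathrm{diag}(\tilde\gamma_i^{(t)}/\tilde\eta_i)$, we have
\begin{equation*}
\bar{\mathbf{Z}}(\mathbf{P}) - \mathbf{Z}(\mathbf{P}) = \sum_{t=1}^{\nu}\sum_{i=1}^{N} \mathbf{X}_{it}, \qquad \mathbf{X}_{it} \doteq \frac{1}{\nu}\Bigl(\frac{\tilde\gamma_i^{(t)}}{\tilde\eta_i}-1\Bigr) p_{ii}\, \mathbf{u}_{(i)}^\top {\mathbf{u}_\partial}_{(i)} \in \mathds{R}^{s\times m}.
\end{equation*}
The summands are independent across $(i,t)$ and have zero mean because $\mathbb{E}[\tilde\gamma_i^{(t)}]=\tilde\eta_i$. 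Indices with $\tilde\eta_i = 1$ contribute zero, which is consistent with the factor $(1/\tilde\eta_i - 1)$ appearing in the variance.

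\textbf{Variance proxy.} I will compute the two matrix variances. Using $\mathbb{E}[(\tilde\gamma/\tilde\eta - 1)^2] = 1/\tilde\eta - 1$, $\|{\mathbf{u}_\partial}_{(i)}\|^2 = \ell_i(\mathbf{U}_\partial)$ and $\|\mathbf{u}_{(i)}\|^2 = \ell_i(\mathbf{U})$, summing over $t$ gives
\begin{equation*}
\sum_{i,t}\mathbb{E}[\mathbf{X}_{it}\mathbf{X}_{it}^\top] = \frac{1}{\nu}\sum_i p_{ii}^2\Bigl(\frac{1}{\tilde\eta_i}-1\Bigr)\ell_i(\mathbf{U}_\partial)\,\mathbf{u}_{(i)}^\top\mathbf{u}_{(i)},
\end{equation*}
with the analogous expression for $\sum\mathbb{E}[\mathbf{X}_{it}^\top\mathbf{X}_{it}]$. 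These are exactly the diagonal blocks of $\mathbf{V}_{\bar{\mathbf{Z}}}$. In the dilation $\mathcal{H}(\mathbf{X}) = \bigl[\begin{smallmatrix}0 & \mathbf{X}\\ \mathbf{X}^\top & 0\end{smallmatrix}\bigr]$ we have $\mathcal{H}(\mathbf{X})^2 = \mathrm{blkdiag}(\mathbf{X}\mathbf{X}^\top, \mathbf{X}^\top\mathbf{X})$, so $\mathbf{V}_{\bar{\mathbf{Z}}}$ is precisely the matrix variance of the dilated sum. Moreover, $\|\mathcal{H}(\cdot)\| = \|\cdot\|$.

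\textbf{Uniform bound $L$.} We have $\|\mathbf{X}_{it}\| \le \frac{1}{\nu}\max\{1/\tilde\eta_i - 1, 1\}\, p_{ii}\sqrt{\ell_i(\mathbf{U})\ell_i(\mathbf{U}_\partial)}$. For indices with $\tilde\eta_i = c\zeta_i < 1$, this is at most $\frac{1}{\nu c}\bigl(\sum_j \ell_j(\mathbf{U})\ell_j(\mathbf{U}_\partial)\bigr)\, p_{ii}/\sqrt{\ell_i(\mathbf{U})\ell_i(\mathbf{U}_\partial)}$, which gives the stated $L$ after taking the maximum over $i$. The regime $\tilde\eta_i \ge 1/2$, where the factor $1$ dominates, needs a separate remark. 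Such indices either have $\tilde\eta_i = 1$ and vanish, or have $c\zeta_i \in [1/2, 1)$; in the latter case one only checks that $1 \le 1/\tilde\eta_i \le 2$, possibly losing a factor $2$ unless it is absorbed. This is the step I expect to be the main obstacle: matching the constant exactly. If it does not match, I would state $L$ as the bound obtained for $\tilde\eta_i < 1/2$, consistent with the variance argument used in \eqref{VarZP}.

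\textbf{Conclusion.} I then invoke the intrinsic-dimension Bernstein inequality for a sum of independent, centred Hermitian matrices $\mathcal{H}(\mathbf{X}_{it})$ with $\lambda_{\max}\le L$ and variance proxy $\mathbf{V}_{\bar{\mathbf{Z}}}$. Applying it to both $\pm$ tails gives the factor $2$, and yields
\begin{equation*}
\mathbb{P}(\|\cdot\|\ge\epsilon) \le 2\cdot \mathrm{intdim}(\mathbf{V}_{\bar{\mathbf{Z}}})\exp\bigl(-\tfrac{\epsilon^2/2}{\|\mathbf{V}_{\bar{\mathbf{Z}}}\| + L\epsilon/3}\bigr).
\end{equation*}
Here $\mathrm{intdim} = \mathrm{Tr}/\|\cdot\|$. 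The sharp version of Tropp's theorem carries an extra factor $4$ and is valid for $\epsilon \ge \sqrt{\|\mathbf{V}\|} + L/3$. I would either absorb these into the constant and range, or note that the stated form follows the paper's convention, mirroring Theorem \ref{theoremYPerr}.
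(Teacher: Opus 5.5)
Your argument follows the paper's own proof (Appendix~\ref{Proofthm56}) step for step: centred rank-one summands indexed by $(t,i)$, the Hermitian dilation whose second moment is exactly $\mathbf{V}_{\bar{\mathbf{Z}}}$, a uniform bound $L$, and the intrinsic-dimension Bernstein inequality. Your variance computation is correct. The step you flag as the main obstacle is not one. In both outcomes $|\tilde\gamma_i^{(t)}/\tilde\eta_i-1|\le 1/\tilde\eta_i$, and $L$ is built from $1/(c\zeta_i)=1/\tilde\eta_i$, not from $1/(c\zeta_i)-1$. Hence $\|\mathbf{X}_{it}\|\le \frac{1}{\nu c\zeta_i}\,p_{ii}\sqrt{\ell_i(\mathbf{U})\ell_i(\mathbf{U}_\partial)}\le L$ for every $i$ with $\tilde\eta_i<1$; the summands with $\tilde\eta_i=1$ vanish. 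No factor $2$ is lost, and there is no need to restrict to $\tilde\eta_i<\tfrac12$. This is also a cleaner justification than the paper's ``worst case'' remark.

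The step that does not go through as written is the last one, and the paper's one-line appeal to Bernstein's inequality leaves the same point open. Your ``both $\pm$ tails'' step does not produce a factor $2$. The dilation's spectrum is symmetric, so its largest eigenvalue already equals $\|\bar{\mathbf{Z}}(\mathbf{P})-\mathbf{Z}(\mathbf{P})\|$ and one tail suffices. However, Tropp's published intrinsic-dimension bound for that tail has prefactor $4\,\mathrm{Tr}(\mathbf{V}_{\bar{\mathbf{Z}}})/\|\mathbf{V}_{\bar{\mathbf{Z}}}\|$, so citing it gives the theorem only with $4$ in place of $2$. To recover the stated constant for every $\epsilon>0$, stop Tropp's proof one step earlier. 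With $\theta=\epsilon/(\|\mathbf{V}_{\bar{\mathbf{Z}}}\|+L\epsilon/3)$ and $a=\epsilon^2/(\|\mathbf{V}_{\bar{\mathbf{Z}}}\|+L\epsilon/3)$, it reads
\begin{equation*}
\mathbb{P}\bigl(\|\bar{\mathbf{Z}}(\mathbf{P})-\mathbf{Z}(\mathbf{P})\|\ge\epsilon\bigr)\le \frac{\mathrm{Tr}(\mathbf{V}_{\bar{\mathbf{Z}}})}{\|\mathbf{V}_{\bar{\mathbf{Z}}}\|}\,\frac{e^{a}}{e^{a}-a-1}\,e^{-a/2}\le \frac{\mathrm{Tr}(\mathbf{V}_{\bar{\mathbf{Z}}})}{\|\mathbf{V}_{\bar{\mathbf{Z}}}\|}\Bigl(1+\frac{3}{a^2}\Bigr)e^{-a/2},
\end{equation*}
and this holds for all $\epsilon>0$; the range restriction only enters when $1+3/a^2$ is bounded by $4$. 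For $a\ge\sqrt{3}$ the factor $1+3/a^2$ is at most $2$, which is the claim. For $a<\sqrt{3}$ the claimed right-hand side exceeds $1$, so the bound holds trivially. To see this, note that the two diagonal blocks of $\mathbf{V}_{\bar{\mathbf{Z}}}$ are PSD with the same trace $\frac{1}{\nu}\sum_i p_{ii}^2(1/\tilde\eta_i-1)\ell_i(\mathbf{U})\ell_i(\mathbf{U}_\partial)$, and each block's norm is at most that trace. Hence $\mathrm{Tr}(\mathbf{V}_{\bar{\mathbf{Z}}})/\|\mathbf{V}_{\bar{\mathbf{Z}}}\|\ge 2$, and the bound is at least $4e^{-\sqrt{3}/2}>1$. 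With this addition your proof establishes the statement exactly as written.
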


\begin{proof}
Deferred to \ref{Proofthm56}.
\end{proof}

In the tail probability \eqref{concZP} we have reverted to using the intrinsic dimension of the variance in the dilated summands comprising the error $\bar{\mathbf{Z}}(\mathbf{P}) - \mathbf{Z}(\mathbf{P})$ instead of the ambient dimension $s+m$, aiming to exploit a potential fast decay in the eigenvalues of the PSD variance matrix $\mathbf{V}_{\bar{\mathbf{Z}}}$ whose effective rank can be significantly less than the ambient dimension \cite{Tropp2012UserFriendly}. Within the exponent, $L$ is kept small unless there is an overlap between the indices where the leverage scores $\boldsymbol{\ell}(\mathbf{U})$, $ \boldsymbol{\ell}(\mathbf{U}_\partial)$ and the largest entries of $\mathbf{P}$ are concentrated. Without any explicit information on the parameters, it is the leverage score profiles $\boldsymbol{\ell}(\mathbf{U})$ and $ \boldsymbol{\ell}(\mathbf{U}_\partial)$ that effectively control which rows dominate in the symmetric and non-symmetric products that define $\mathbf{Y}(\mathbf{P})$ and $\mathbf{Z}(\mathbf{P})$. Generally speaking, any significant alignment between the largest entries of $\mathbf{P}$ with those of $\boldsymbol{\ell}(\mathbf{U})$ and $ \boldsymbol{\ell}(\mathbf{U}_\partial)$ triggers a spike in both the sketch variances and the error probability tail bounds. Since $\boldsymbol{\ell}(\mathbf{U})$ and $ \boldsymbol{\ell}(\mathbf{U}_\partial)$ are generally not aligned, $\bar{\mathbf{Y}}(\mathbf{P})$ is more prone to this unfortunate alignment effect. Variation in the parameters impacts variance quadratically and degrades the inverse sketch error through $\sqrt{\kappa(\mathbf{Y}(\mathbf{P}))}$, whilst parameter choices with very large coefficients of variation result in ill-conditioning for both $\mathbf{Y}(\mathbf{P})$ and $\mathbf{A}(\mathbf{P})$ \cite{VavasisWildCoeff}. Overall, we should expect good performance when the subspace dimension $s$ is kept modestly low, whilst having $\nu c$ large enough to compensate for suboptimal sampling when $\beta<1$, provided that parameters and leverage scores are not well aligned. 

\section{Sketch variance reduction}

The analysis of the previous section prompts the question whether for a given sampling budget we can compute more efficient estimators than $\bar{\mathbf{Y}}(\mathbf{P})$ and $\bar{\mathbf{Z}}(\mathbf{P})$ and in turn obtain a solution with smaller mean squared error than that of $\bar{\mathbf{v}}=\bar{\mathbf{Y}}^{-1}(\mathbf{P}) \bar{\mathbf{q}}(\mathbf{P})$ without compromising the  computational complexity. We argue that this is feasible using an approach that entails a control variates method for suppressing the variance in the multi-sketch average estimators followed by an optimisation problem that fuses two sketches into a more accurate one. 

\subsection{Control variates for sketches}

To improve the efficiency of the multi-sketch averages we appeal to the Monte Carlo scheme of control variates \cite{Lemieux2009}. Indeed, the variance of the entries of these estimators can be suppressed using an additive correction term that involves the sketching error of another random sketch -- the control variates -- whose expectation is either known or `cheap' to compute exactly. The level of reduction in variance scales to the squared correlation between the sought sketch and its control variate, and so the challenge is to make an appropriate choice of the latter without any high-dimensional computations.   Specific to our setting, this involves identifying a matrix $\mathbf{T}$ such that the entries of $\bar{\mathbf{Y}}(\mathbf{T})$ correlate strongly with the same-index entries of $\bar{\mathbf{Y}}(\mathbf{P})$ (and likewise an $\mathbf{R}$ such that the entries of $\bar{\mathbf{Z}}(\mathbf{R})$ correlate with those of $\bar{\mathbf{Z}}(\mathbf{P})$). 
To demonstrate our idea we focus explicitly on $\mathbf{Y}(\mathbf{P})$ noting that the procedure is similar for the non-symmetric $\mathbf{Z}(\mathbf{P})$ as we outline in \ref{SketchZCV}. 

Let $\mathds{L} \subset \mathds{R}^{N \times N}$ be a low-dimensional class of positive diagonal matrices then choosing $\mathbf{T} \in \mathds{L}$ to maximise the smallest absolute correlation between same-index entries of $\bar{\mathbf{Y}}(\mathbf{P})$ and $\bar{\mathbf{Y}}(\mathbf{T})$ constructed with the \emph{same} $\nu$ iid sketching matrices $\{\mathbf{S}_1,\ldots,\mathbf{S}_{\nu}\}$ as in \eqref{sdiag} mounts to minimising the Frobenius norm of the variance of their difference 
\begin{equation}\label{Topt}
\mathbf{T}^\star = \arg\min_{\mathbf{T} \in \mathds{L}} \mathbb{V} \bigl (\bar{\mathbf{Y}}(\mathbf{P}) - \bar{\mathbf{Y}}(\mathbf{T}) \bigr ).   
\end{equation}
Exploiting symmetry and the remark \ref{varcovar} the objective of the minimisation becomes 
\begin{align}\label{vardif} 
\mathbb{V} \bigl ( \bar{\mathbf{Y}}(\mathbf{P}) - \bar{\mathbf{Y}}(\mathbf{T}) \bigr ) = \frac 1 \nu \sum_{i=1}^N \Bigl ( \frac{1}{\eta_i} - 1 \Bigr ) \bigl ( p_{ii} - t_{ii} \bigr )^2 \ell_i^2(\mathbf{U}),
\end{align}
assuming $0 < \eta_i < 1$ for all $i \in \{N\}$. Excluding the trivial choice $\mathbf{T}=\mathbf{P}$, 
according to \eqref{vardif} it may suffice to pick a $\mathbf{T}$ that is close to $\mathbf{P}$ at the indices where $\eta_i^{-1} \ell_i^2(\mathbf{U})$ is large. In this context, a simple way to obtain a suitable $\mathbf{T}$ is by forming a low-resolution piecewise-constant approximation of $\mathbf{P}$ that trades-off approximation accuracy and computational effort. With each index in $\{N\}$ uniquely mapped to one of the $n_e$ elements of the mesh $\Omega_h$ through the FEM interpolation functions, consider the \emph{disjoint} index sets $\mathcal{I}_1, \ldots, \mathcal{I}_{\mu}$ topologically mapped to non-overlapping regions of the mesh denoted by $\mathcal{R}_1, \ldots, \mathcal{R}_\mu$ where
\begin{equation}
\{N\} = \bigcup_{j=1}^{\mu} \mathcal{I}_j, \quad \Omega_h = \bigcup_{j=1}^{\mu} \mathcal{R}_j, \quad \text{such that} \quad \mathcal{I}_j \mapsto \mathcal{R}_j, \quad \mathcal{R}_j \cap \mathcal{R}_{j'} = \emptyset \quad \text{for} \; j \neq j'.
\end{equation}
With $\mathcal{I}_j$ we associate an $s \times s$ matrix
$\mathbf{U}_{(\mathcal{I}_j)}^\top \mathbf{U}_{(\mathcal{I}_j)}$, where $\mathbf{U}_{(\mathcal{I}_j)}$ denotes the $\mathcal{I}_j$ rows of matrix $\mathbf{U}$, and upon imposing a region-wise constant approximation on $\mathbf{T}$ yields the low-dimensional vector of coefficients  
\begin{equation}\label{opttau}
\tau_j^\star = \frac{\sum_{i \in \mathcal{I}_j} w_{i} p_{ii}}{\sum_{i \in \mathcal{I}_j} w_{i}}, \quad \text{where} \quad w_i = \Bigl ( \frac{1}{\eta_i} - 1 \Bigr ) \ell_i^2(\mathbf{U}), \quad \text{for} \quad j=1,\ldots, \mu,
\end{equation}
from which we set $t_{ii} = \tau^\star_j, \; \forall i \in \mathcal{I}_j$.
The optimal coefficients $\boldsymbol{\tau}^\star$ always exist unless $\eta_i=1$ for all $i \in \mathcal{I}_j$. 
Having precomputed the $s \times s$ matrices $\mathbf{U}_{(\mathcal{I}_1)}, \ldots, \mathbf{U}_{(\mathcal{I}_\mu)}$ and the $\mu$ sums in the denominator in \eqref{opttau}, the expectation of the control variates matrix can be obtained as an $\mu$-dimensional product
\begin{equation}
\mathbb{E}[\bar{\mathbf{Y}}(\mathbf{T})] = \mathbf{U}_{(\mathcal{I}_j)}^\top \mathrm{diag} ( \boldsymbol{\tau}) \mathbf{U}_{(\mathcal{I}_j)},   
\end{equation}
in $\mathcal{O}(\mu s^2)$ complexity, and the special case of $\mu=1$ where $\mathcal{I}_1=\{N\}$ yields $\mathbf{T}$ to be a multiple of the identity matrix, and thus without loss of generality we can assign $\mathbf{E}[\bar{\mathbf{Y}}(\mathbf{T})] = \mathbf{I}$. To implement this technique it remains to decide how to decompose the domain $\Omega_h$ into the various disjoint regions. In domains with interior structure and a few dominant spatial features, these regions can be chosen to conform these geometric constraints. Otherwise, we may adopt a maximum entropy assumption on the parameters and form these regions based solely on topological adjacency, using for example the k-means algorithm as shown for example in figure \ref{fig1:Ptilde} \cite{GolubVanLoan2013}. Under the optimal choice of coefficients, this simple construction of control variates can achieve a reduction in the variance difference
$$
\mathbb{V}\bigl ( \bar{\mathbf{Y}}(\mathbf{P}) - \bar{\mathbf{Y}}(\mathbf{T}^\star) \bigr ) = \frac 1 \nu \biggl ( \sum_{i=1}^{N} w_i p_{ii}^2 - \sum_{j=1}^{\mu} \frac{\Bigl ( \sum_{i \in \mathcal{I}_j} w_i p_{ii} \Bigr )^2}{\sum_{i \in \mathcal{I}_j} w_i} \biggr ) 
$$
confirming that correlation peaks when $\{p_{ii}\}_{i \in \mathcal{I}_j}$ are well clustered around their region-wide means, or when outliers in the same set are matched with very small leverage scores. Because of the high-dimensional sums in the numerator of the definition of $\boldsymbol{\tau}^\star$ in \eqref{opttau}, the optimal coefficients can be approximated by standard Monte Carlo or importance sampling. As we show in \ref{SketchZCV} a similar approach can lead to a control variates matrix $\bar{\mathbf{Z}}(\mathbf{R})$ with $\mathbf{R} \in \mathds{L}$.
\begin{figure}
\centering
\includegraphics[width=0.32\linewidth, trim=60 42 60 0]{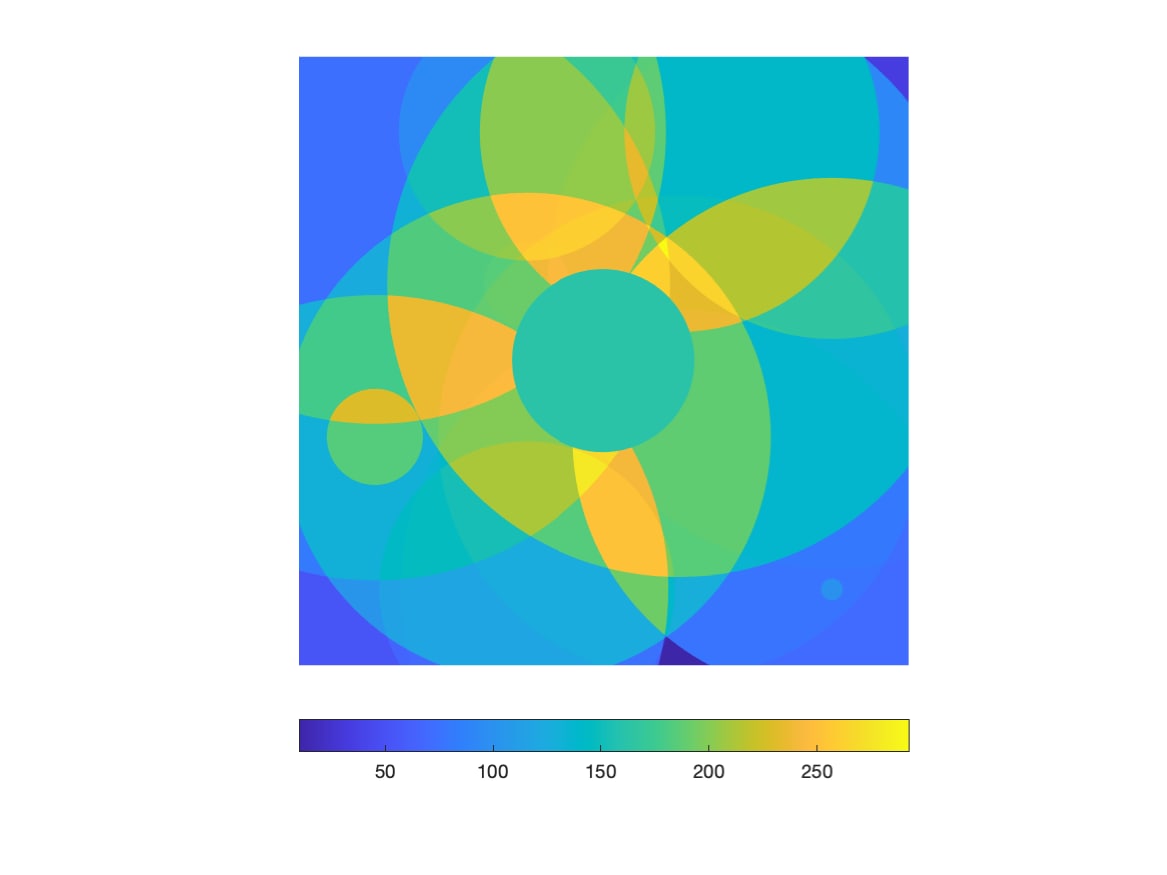}
\includegraphics[width=0.32\linewidth, trim=60 42 60 0]{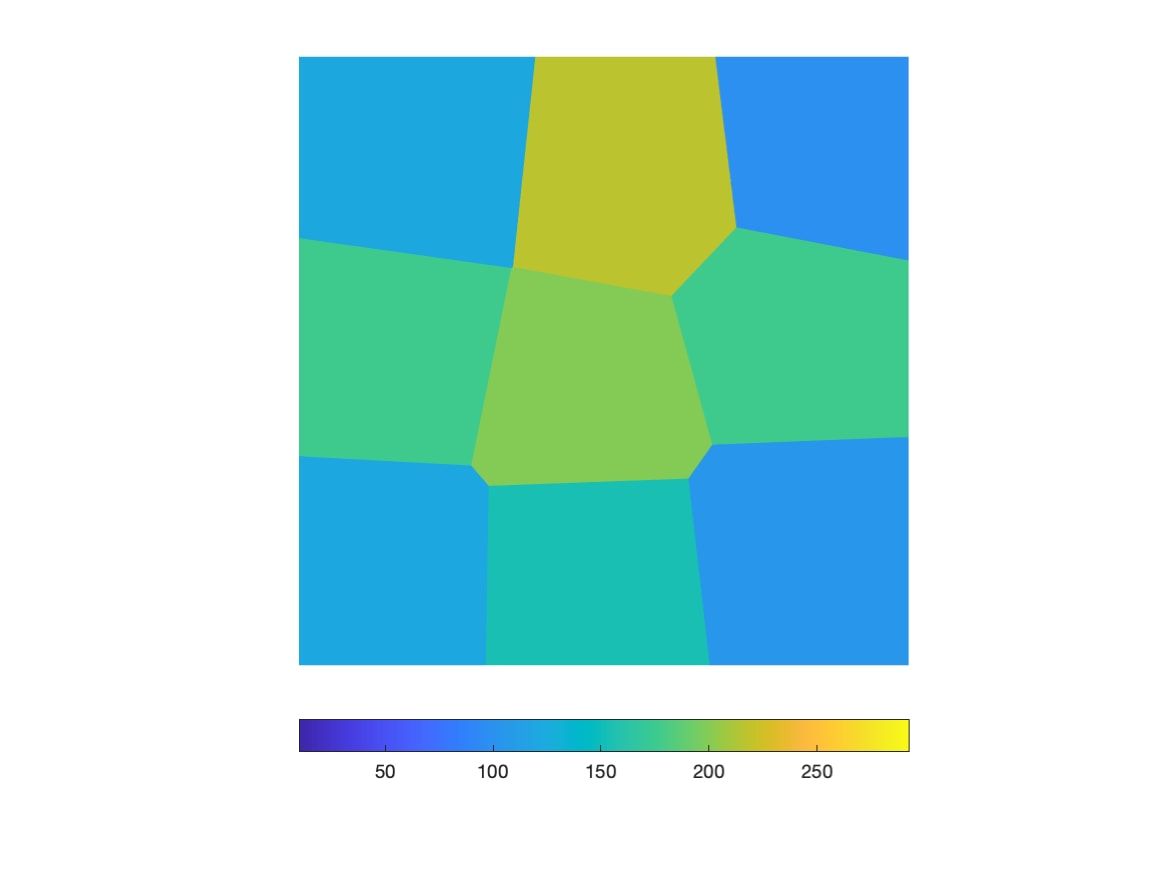}
\includegraphics[width=0.32\linewidth, trim=60 42 60 0]{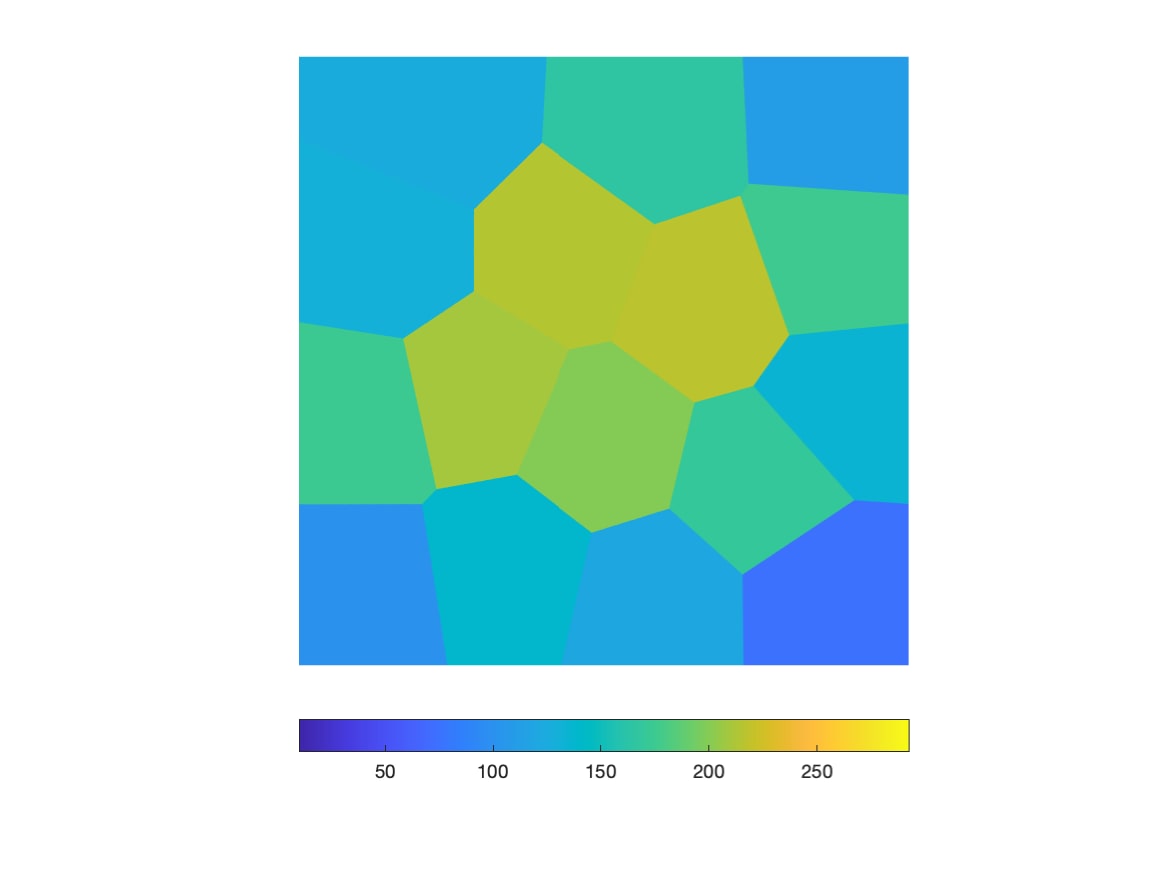}
\caption{On the left, the profile of a 2D isotropic $\mathbf{P}$ tensor field on a mesh with $n_e \approx 5 \times 10^6$ triangular elements and $N \approx 10^7$ indices. Next to it the corresponding profiles of some isotropic $\mathbf{T}$ fields with $\mu=9$ (middle) and $\mu=16$ (right) disjoint regions. The relative discrepancies $\|\mathbf{P}-\mathbf{T}\|_\mathrm{F}^2 / \|\mathbf{P}\|_\mathrm{F}^2$ are 50\% and 35\% respectively.}
\label{fig1:Ptilde}
\end{figure}
If we sketch the pairs $\bigl \{ \bar{\mathbf{Y}}(\mathbf{P}), \bar{\mathbf{Y}}(\mathbf{T}) \bigr \}$ and $\bigl \{ \bar{\mathbf{Z}}(\mathbf{P}), \bar{\mathbf{Z}}(\mathbf{R}) \bigr \}$, then the reduced variance estimators can be obtained as   
\begin{align}\label{cvformula}
\begin{cases}
\mathbf{Y}_{B}(\mathbf{P}) & = \bar{\mathbf{Y}}(\mathbf{P}) - \mathbf{B} \circ \bigl (\bar{\mathbf{Y}}(\mathbf{T}) - \mathbb{E}[\bar{\mathbf{Y}}(\mathbf{T})] \bigr ),\\ 
\mathbf{Z}_{B}(\mathbf{P}) & = \bar{\mathbf{Z}}(\mathbf{P}) - \mathbf{B}_Z \circ \bigl (\bar{\mathbf{Z}}(\mathbf{R}) - \mathbb{E}[\bar{\mathbf{Z}}(\mathbf{R})] \bigr ),
\end{cases}
\end{align}
for some weights matrices $\mathbf{B} \in \mathds{R}^{s \times s}$ and $\mathbf{B}_Z \in \mathds{R}^{s \times m}$ with optimal entries 
\begin{equation}\label{optweights}
  \begin{cases} b_{hk}^\star = \frac{\mathbb{C} \bigl ( \bar{y}_{hk} (\mathbf{P}), \bar{y}_{hk}(\mathbf{T}) \bigr )}{\mathbb{V}\bigl (\bar{y}_{hk}(\mathbf{T}) \bigr )} & \text{for} \quad 1 \leq h,k \leq s\\  {b_Z}^{\star}_{hk} = \frac{\mathbb{C} \bigl ( \bar{z}_{hk} (\mathbf{P}), \bar{z}_{hk}(\mathbf{R}) \bigr )}{\mathbb{V}\bigl (\bar{z}_{hk}(\mathbf{R}) \bigr )} & \text{for} \quad 1 \leq h \leq s, \; 1 \leq k \leq m
  \end{cases}.
\end{equation} 
The reduced variance sketches $\mathbf{Y}_{B}(\mathbf{P})$ and $\mathbf{Z}_{B}(\mathbf{P})$ typically incur a negligibly small amount of bias as the weights are computed using the same ensembles used to compute $\bar{\mathbf{Y}}(\mathbf{P})$ and $\bar{\mathbf{Z}}(\mathbf{P})$. Treating these weights as known constants leads to a conditional Frobenius norm variance  
\begin{equation}
\mathbb{V}(\mathbf{Y}_B(\mathbf{P})) = \sum_{h,k=1}^s \mathbb{V}(\bar{y}_{hk}(\mathbf{P})) (1 - \rho^2_{hk}) + \sum_{h,k=1}^s \mathbb{V}(\bar{y}_{hk}(\mathbf{T})) \mathbb{V}(\hat b_{hk}),
\end{equation}
where $\rho_{hk}$ is the correlation coefficient between $\bar{y}_{hk}(\mathbf{P})$ and $\bar{y}_{hk}(\mathbf{T})$, and $\hat b_{hk}$ is an estimator of $b^\star_{hk}$, resulting in a new Frobenius norm concentration bound 
\begin{equation}\label{CVFroconc}
\mathbb{P} \bigl ( \| \mathbf{Y}_B(\mathbf{P}) - \mathbf{Y}(\mathbf{P}) \|_\mathrm{F} \geq \epsilon \bigr ) \leq \frac{1}{\epsilon^2} \Bigl ( (1-R^2) \mathbb{V}(\bar{\mathbf{Y}}(\mathbf{P})) + \sum_{h,k=1}^s \mathbb{V}(\bar{y}_{hk}(\mathbf{T})) \mathbb{V}(\hat b_{hk}) \Bigr )
\end{equation}
with $R^2 = \sum_{h,k=1}^s \frac{\mathbb{V}(\bar{y}_{hk}(\mathbf{P}))}{\mathbb{V}(\bar{\mathbf{Y}}(\mathbf{P}))} \rho^2_{hk} \; \in [0,1]$ the weighted coefficient of determination. As $\mathbb{V}(\hat b_{hk})$ scales to $\nu^{-1}$ the second term in \eqref{CVFroconc} diminishes as the size of ensemble grows, leaving an advantage relative to the bound for the uncorrected sketch \eqref{Frob_conc} proportional to $(1-R^2)$. As far as the spectral norm of the corrected sketch's error $\|\mathbf{Y}_B(\mathbf{P})- \mathbf{Y}(\mathbf{P})\|$ is concerned, the conclusions there are more nuanced. If the sketch ensemble is restricted to small sizes, e.g. $\nu \leq 10$, the control variate weights may become too noisy and in turn yield a `corrected' sketch whose error is even worse than that of \eqref{concentration}, without achieving any reduction in variance. Also, from a linear algebra point of view, note that whilst we have $\bar{\mathbf{Y}}(\mathbf{P}) \succ 0$ with high probability, the corrected sketch may become arbitrarily ill-conditioned since from Weyl's inequality on symmetric matrices we have \cite{StewartSun}
\begin{equation}\label{Weyls}
\lambda_{\min}\bigl (\mathbf{Y}_B(\mathbf{P}) \bigr ) \geq \lambda_{\min}\bigl (\mathbf{Y}(\mathbf{P}) \bigr ) - \bigl \|\mathbf{Y}_B(\mathbf{P}) - \mathbf{Y}(\mathbf{P}) \bigr \|,
\end{equation}
with the norm term being proportional to $\|\mathbf{B}\|$ given \eqref{cvformula}, hence there is no particular lower bound for the eigenvalues of the corrected sketch. Because we utilise averages of $\nu$ sketches rather than individual sketches directly, maintaining $c \sim \mathcal{O}(s \log s)$ under tight computational constraints is highly advantageous. Scaling $\nu$ alone effectively boosts the overall sample budget, which compensates for $\beta < 1$ and enhances the control variates' ability to suppress sketch variance. Additionally, when the sampling ratio is extremely low, i.e., $c/N \ll 10^{-3}$ the computation is further accelerated by executing Bernoulli sampling via efficient geometric skipping.  

\subsection{Low variance inverse estimator}

Equipped with $\bar{\mathbf{Y}}(\mathbf{P})$, the corrected  $\mathbf{Y}_B(\mathbf{P})$, the sample-based estimators of their variances $\hat{\mathbb{V}}(\bar{\mathbf{Y}}(\mathbf{P}))$ and $\hat{\mathbb{V}}(\mathbf{Y}_B(\mathbf{P}))$, and the vector $\mathbf{q}_B(\mathbf{P}) = \bigl ( \mathbf{V} \boldsymbol{\Sigma} \bigr )^{-1} \boldsymbol{\Phi}^\top \mathbf{f} -  \mathbf{Z}_B(\mathbf{P}) \, \boldsymbol{\Sigma}_\partial \mathbf{V}_\partial^\top  \mathbf{u}^\partial$ we question whether it is possible to fuse this information in order to obtain a more accurate estimator than the multi-sketch solution in \eqref{multisketchave}, \emph{using the same sample budget} $\nu c$. For the remainder of this section we will be dealing exclusively with $\mathbf{P}$ dependent objects, we suppress the argument for clarity. Recall that due to theorem \ref{concentave}, $\bar{\mathbf{Y}}$ is SPD with probability no less than $1 - \delta^\star$, however as indicated in \eqref{Weyls} the reduced variance $\mathbf{Y}_B$ does not carry such a guarantee. The approach we advocate below requires both matrices to be SPD, and thus in the event where $\mathbf{Y}_B \nsucc 0$ a small damping term must be applied to restore positive definiteness. By virtue of the improved Frobenius norm bound established in \eqref{CVFroconc}, we now designate $\mathbf{Y}_B$ to be the \emph{forward} sketch, reflecting its higher fidelity in approximating matrix vector products involving the unknown matrix $\mathbf{Y}$ in the sense that  $\|\mathbf{Y}_B \mathbf{x} - \mathbf{Y} \mathbf{x} \|$ remains demonstrably small in probability. Conversely, we designate the uncorrected multi-sketch average estimator $\bar{\mathbf{Y}}$ to be the \emph{inverse} sketch. As guaranteed by \eqref{sketchcriterionY}, $\bar{\mathbf{Y}}^{-1}$ closely approximates the inverse of the unknown $\mathbf{Y}$ in spectral norm, thereby ensuring that the error in $\|\bar{\mathbf{Y}}^{-1} \mathbf{x} - \mathbf{Y}^{-1} \mathbf{x} \|$ is bounded with high probability. A robust way fuse the information between the forward and inverse sketches is to pose a regularised optimisation problem over the cone of $s \times s$ SPD matrices $\mathds{C}^s_{+}$, aiming to estimate directly the inverse $\mathbf{Y}^{-1}$. In effect we pose the problem
\begin{equation}\label{Coneopt}
\mathbf{H}^\star = \arg\min_{\mathbf{H} \in \mathds{C}^s_{+}}  \mathcal{J}(\mathbf{H}) \doteq \mathcal{L}(\mathbf{H} \mathbf{Y}_B, \mathbf{I}) + \mathcal{L}(\mathbf{Y}_B \mathbf{H}, \mathbf{I}) + \theta \mathcal{L}(\mathbf{H}, \bar{\mathbf{Y}}^{-1}), 
\end{equation}
where for $\mathbf{A}, \mathbf{B} \succ 0$
\begin{equation}\label{SL}
\mathcal{L}(\mathbf{A}, \mathbf{B}) \doteq \mathrm{Tr}(\mathbf{A} \mathbf{B}^{-1}) - \log \det (\mathbf{A} \mathbf{B}^{-1}) - s
\end{equation}
is the Stein's loss function $\mathcal{L}: \mathds{C}^s_{+} \times \mathds{C}^s_{+} \rightarrow [0,+\infty)$, a Bregman-type distance measure based on the logarithmic determinant divergence \cite{KANAMORI2013104}, and $\theta > 0$ is a regularisation parameter. Anchoring our search in the vicinity of the inverse sketch we seek to find a new SPD estimator whose largest eigenvalues are nearer to those of the inverse sketch, but its smallest are closer to those of the forward. The objective in \eqref{Coneopt} is well suited to our setting because Stein’s loss, through the logarithmic function, penalises heavier the underestimation of eigenvalues rather than their overestimation, which is critical in avoiding ill-conditioned estimators. Moreover, a strictly positive $\theta$ balances out the risk of compromising invertibility for the sake of statistical efficiency. Ultimately, for a choice of $\theta$ our low-variance projected FEM solution is 
\begin{equation}\label{lowvarsol}
\bar{\mathbf{w}}_\theta = (\boldsymbol{\Sigma}\mathbf{V}^\top)^{-1} \bar{\mathbf{v}}_\theta, \quad \text{where} \quad \bar{\mathbf{v}}_\theta = \mathbf{H} \, \mathbf{q}_B(\mathbf{P}), \quad  \mathbf{q}_B(\mathbf{P}) = (\mathbf{V}\boldsymbol{\Sigma})^{-1}\boldsymbol{\Phi}^\top \mathbf{f} - \mathbf{Z}_B(\mathbf{P}) \boldsymbol{\Sigma}_\partial \mathbf{V}_\partial^\top \mathbf{u}^\partial, 
\end{equation}
but before we discuss the algorithmic details of this estimator we must first address its existence and uniqueness.

\begin{theorem}\label{dualoptcomvexity}
Given SPD matrices $\bar{\mathbf{Y}}$, $\mathbf{Y}_B$, and a parameter $\theta > 0$ such that $2\mathbf{Y}_B + \theta \bar{\mathbf{Y}} \succ 0$ then $\mathcal{J}(\mathbf{H})$ in  \eqref{Coneopt} is strictly convex on the cone of $s \times s$ SPD matrices $\mathds{C}^s_{+}$. If a solution exists, then it is the unique global minimiser
\begin{equation}\label{Hoptsol}
\mathbf{H}^\star = (2+\theta)\,\bigl(2\mathbf{Y}_B + \theta \bar{\mathbf{Y}} \bigr)^{-1}.
\end{equation}
\end{theorem}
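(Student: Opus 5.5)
The plan is to reduce $\mathcal{J}$ to an explicit closed form in $\mathbf{H}$, up to constants independent of $\mathbf{H}$. Everything then follows from the standard properties of $\mathrm{Tr}$ and $\log\det$ on the SPD cone.

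\textbf{Step 1: closed form for $\mathcal{J}$.} I will first check that each Stein loss in \eqref{Coneopt} is well defined, even though $\mathbf{H}\mathbf{Y}_B$ is not symmetric. We have
\[
\det(\mathbf{H}\mathbf{Y}_B)=\det\mathbf{H}\,\det\mathbf{Y}_B>0,
\qquad
\mathrm{Tr}(\mathbf{H}\mathbf{Y}_B)=\mathrm{Tr}\bigl(\mathbf{Y}_B^{\frac12}\mathbf{H}\mathbf{Y}_B^{\frac12}\bigr)>0,
\]
so the logarithms and traces are meaningful. Taking $\mathbf{B}=\mathbf{I}$ in \eqref{SL}, and using cyclicity of the trace and multiplicativity of the determinant,
\[
\mathcal{L}(\mathbf{H}\mathbf{Y}_B,\mathbf{I})=\mathcal{L}(\mathbf{Y}_B\mathbf{H},\mathbf{I})=\mathrm{Tr}(\mathbf{H}\mathbf{Y}_B)-\log\det\mathbf{H}-\log\det\mathbf{Y}_B-s .
\]
Likewise, $\mathcal{L}(\mathbf{H},\bar{\mathbf{Y}}^{-1})=\mathrm{Tr}(\mathbf{H}\bar{\mathbf{Y}})-\log\det\mathbf{H}-\log\det\bar{\mathbf{Y}}-s$. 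Summing with weights $1,1,\theta$ gives
\[
\mathcal{J}(\mathbf{H})=\mathrm{Tr}\bigl(\mathbf{H}\mathbf{K}\bigr)-(2+\theta)\log\det\mathbf{H}+\mathrm{const},\qquad \mathbf{K}\doteq 2\mathbf{Y}_B+\theta\bar{\mathbf{Y}} .
\]

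\textbf{Step 2: strict convexity.} The trace term is linear in $\mathbf{H}$. For $-\log\det$, I will restrict to a line $\mathbf{H}+t\boldsymbol{\Delta}$ with $\boldsymbol{\Delta}\neq 0$ symmetric. Its second derivative at $t=0$ is
\[
\mathrm{Tr}\bigl(\mathbf{H}^{-1}\boldsymbol{\Delta}\mathbf{H}^{-1}\boldsymbol{\Delta}\bigr)=\bigl\|\mathbf{H}^{-\frac12}\boldsymbol{\Delta}\mathbf{H}^{-\frac12}\bigr\|_\mathrm{F}^2>0 .
\]
Equivalently, one can diagonalise $\mathbf{H}^{-1/2}\boldsymbol{\Delta}\mathbf{H}^{-1/2}$ and use strict convexity of $-\log$. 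Since $2+\theta>0$, $\mathcal{J}$ is strictly convex on the convex open set $\mathds{C}^s_{+}$.

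\textbf{Step 3: first-order condition and identification of the minimiser.} On symmetric matrices, the gradient of $\mathrm{Tr}(\mathbf{H}\mathbf{K})$ is $\mathbf{K}$ and that of $\log\det\mathbf{H}$ is $\mathbf{H}^{-1}$. Setting $\mathbf{K}-(2+\theta)\mathbf{H}^{-1}=0$ gives the candidate
\[
\mathbf{H}^\star=(2+\theta)\mathbf{K}^{-1}.
\]
Under the hypothesis $\mathbf{K}\succ0$, which here holds automatically because $\mathbf{Y}_B,\bar{\mathbf{Y}}\succ0$ and $\theta>0$, this $\mathbf{H}^\star$ is SPD. It therefore lies in the interior of the feasible cone, so it is an admissible stationary point.

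For a convex differentiable function on an open convex set, a stationary point is a global minimiser. Strict convexity then excludes any second minimiser, since the midpoint of two minimisers would have a strictly smaller value; this gives uniqueness and \eqref{Hoptsol}. I will also remark that a minimiser does exist, because $\mathcal{J}\to+\infty$ both as $\mathbf{H}$ approaches the boundary of the cone (the $-\log\det$ term) and as $\|\mathbf{H}\|\to\infty$ (the trace term dominates). That makes the ``if a solution exists'' clause automatic.

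\textbf{Main obstacle.} There is no serious one. The only points needing care are the well-definedness of the Stein loss on the nonsymmetric products $\mathbf{H}\mathbf{Y}_B$ and $\mathbf{Y}_B\mathbf{H}$, handled in Step 1, and computing gradients within the space of symmetric matrices so that the stationarity equation is correctly stated.
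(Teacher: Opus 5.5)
Your proposal is correct and follows essentially the same route as the paper: you write $\mathcal{J}(\mathbf{H})=\mathrm{Tr}(\mathbf{H}\mathbf{K})-(2+\theta)\log\det\mathbf{H}+\mathrm{const}$ with $\mathbf{K}=2\mathbf{Y}_B+\theta\bar{\mathbf{Y}}$, establish strict convexity through the second-order form $(2+\theta)\|\mathbf{H}^{-1/2}\boldsymbol{\Delta}\mathbf{H}^{-1/2}\|_\mathrm{F}^2>0$, and identify the minimiser by setting the gradient $\mathbf{K}-(2+\theta)\mathbf{H}^{-1}$ to zero. Your extra observation that the stationary point $(2+\theta)\mathbf{K}^{-1}$ is itself SPD, and is therefore a global minimiser of a convex function on an open convex set, settles existence directly, whereas the paper obtains existence separately through its later coercivity lemmas.
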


\begin{proof}
From the derivatives of the trace and log determinant functions the gradient of the objective function becomes
\begin{equation}\label{gradJ}
\nabla \mathcal{J}(\mathbf{H}) = 2 \mathbf{Y}_B + \theta \bar{\mathbf{Y}} - (2+\theta)\,\mathbf{H}^{-1},
\end{equation}
and upon differentiating again we obtain its Hessian acting on a symmetric  direction $\boldsymbol{\Delta}$ as
\begin{equation}\label{HessJ}
\nabla^2 \mathcal{J}(\mathbf{H})[\boldsymbol{\Delta}] = (2+\theta)\,\mathbf{H}^{-1}\boldsymbol{\Delta} \mathbf{H}^{-1},
\end{equation}
and hence $\mathcal{J}(\mathbf{H})$ is continuous and twice differentiable on the cone. Since for any non-zero $\boldsymbol{\Delta}$ 
\begin{align*}
\mathrm{d}^2\mathcal{J}(\mathbf{H})[\boldsymbol{\Delta},\boldsymbol{\Delta}] & = \bigl \langle \boldsymbol{\Delta}, \nabla^2 \mathcal{J}(\mathbf{H})[\boldsymbol{\Delta}] \bigr \rangle_\mathrm{F}
 = (2 + \theta) \, \mathrm{Tr}(\boldsymbol{\Delta}^\top \mathbf{H}^{-1} \boldsymbol{\Delta} \mathbf{H}^{-1} ) =(2+\theta)\,\bigl\|\,\mathbf{H}^{- \frac 1 2}\boldsymbol{\Delta} \mathbf{H}^{-\frac 1 2}\,\bigr\|_\mathrm{F}^2 > 0,
\end{align*}
is strictly positive,  $\mathcal{J}$ is strictly convex on $\mathds{C}^s_{+}$. In the quadratic form above $\langle \cdot, \cdot \rangle_\mathrm{F}$ is the Frobenius inner product. The unique minimiser can be found by setting the gradient in \eqref{gradJ} to zero.
\end{proof}

Having established the uniqueness of our new estimator we now aim to show that this always exists by proving the coercivity of the objective function at the boundary of $\mathds{C}^s_{+}$ and at infinity under the conditions of theorem \ref{dualoptcomvexity}. The following two lemmas capture this desired behaviour. 

\begin{lemma}\label{Boundaryblowup}
For any sequence of matrices $\{\mathbf{H}_k\} \subset \mathds{C}^s_{+}$ whose smallest eigenvalue approaches zero monotonically from within the cone we have
$$
\mathcal{J}(\mathbf{H}_k) \rightarrow + \infty \quad \text{as} \quad \lambda_{\min}(\mathbf{H}_k) \downarrow 0.
$$
\end{lemma}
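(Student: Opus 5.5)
The plan is to expand $\mathcal{J}$ explicitly using the definition \eqref{SL} of Stein's loss and collect terms. For SPD $\mathbf{H}$ we have $\mathrm{Tr}(\mathbf{H}\mathbf{Y}_B) = \mathrm{Tr}(\mathbf{Y}_B\mathbf{H})$ and $\log\det(\mathbf{H}\mathbf{Y}_B) = \log\det\mathbf{H} + \log\det\mathbf{Y}_B$. The third term is $\mathcal{L}(\mathbf{H},\bar{\mathbf{Y}}^{-1}) = \mathrm{Tr}(\mathbf{H}\bar{\mathbf{Y}}) - \log\det\mathbf{H} - \log\det\bar{\mathbf{Y}} - s$. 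Together these give
\begin{equation*}
\mathcal{J}(\mathbf{H}) = \mathrm{Tr}\bigl(\mathbf{H}\mathbf{G}\bigr) - (2+\theta)\log\det\mathbf{H} + C, \qquad \mathbf{G} \doteq 2\mathbf{Y}_B + \theta\bar{\mathbf{Y}},
\end{equation*}
where $C = -2\log\det\mathbf{Y}_B - \theta\log\det\bar{\mathbf{Y}} - (2+\theta)s$ is a finite constant independent of $\mathbf{H}$. This is consistent with the gradient \eqref{gradJ} in Theorem \ref{dualoptcomvexity}. By that theorem's hypothesis, $\mathbf{G}\succ 0$, and we write $g_{\min} = \lambda_{\min}(\mathbf{G})>0$.

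Next we lower bound the two $\mathbf{H}$-dependent terms spectrally. Let $\lambda_1\le\dots\le\lambda_s$ be the eigenvalues of $\mathbf{H}_k$. Since $\mathbf{G}\succeq g_{\min}\mathbf{I}$ and $\mathbf{H}_k\succ0$, we get $\mathrm{Tr}(\mathbf{H}_k\mathbf{G}) = \mathrm{Tr}(\mathbf{H}_k^{1/2}\mathbf{G}\mathbf{H}_k^{1/2}) \ge g_{\min}\sum_j\lambda_j$. Hence
\begin{equation*}
\mathcal{J}(\mathbf{H}_k) - C \;\ge\; \sum_{j=1}^s \bigl( g_{\min}\lambda_j - (2+\theta)\log\lambda_j \bigr).
\end{equation*}
Each scalar function $\phi(x) = g_{\min}x - (2+\theta)\log x$ on $(0,\infty)$ is bounded below by its minimum value $\phi_\star = (2+\theta)\bigl(1 - \log\frac{2+\theta}{g_{\min}}\bigr)$, and $\phi(x)\to+\infty$ as $x\downarrow 0$.

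The conclusion follows by bounding the $j\ge2$ terms below by $(s-1)\phi_\star$, which is uniform in $k$, and noting that the $j=1$ term $\phi(\lambda_{\min}(\mathbf{H}_k))\to+\infty$ as $\lambda_{\min}(\mathbf{H}_k)\downarrow0$. Monotonicity of the sequence is not actually needed.

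The main subtlety is that the other eigenvalues of $\mathbf{H}_k$ may simultaneously blow up. One might worry that $-\log\lambda_j$ then becomes very negative and cancels the blow-up. This is handled because $\phi$ is bounded below on all of $(0,\infty)$: its linear term dominates the logarithm at infinity. That property rests on $g_{\min}>0$, which is exactly why the hypothesis $2\mathbf{Y}_B+\theta\bar{\mathbf{Y}}\succ0$ from Theorem \ref{dualoptcomvexity} is essential. The same estimate will also yield coercivity at infinity in the companion lemma.
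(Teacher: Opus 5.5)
Your proof is correct, and at the decisive step it takes a different and more complete route than the paper's. Both arguments start from the same expansion $\mathcal{J}(\mathbf{H}) = \mathrm{Tr}(\mathbf{H}\mathbf{G}) - (2+\theta)\log\det\mathbf{H} + C$ with $\mathbf{G} = 2\mathbf{Y}_B + \theta\bar{\mathbf{Y}}$. Your constant $C$ is the correct one: the paper's display has $+2\log\det\mathbf{Y}_B$ and omits $-\theta\log\det\bar{\mathbf{Y}}$, which is immaterial to the argument.

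The paper then says the trace term is bounded below on bounded sets, so it suffices that $-\log\det\mathbf{H}_k \to +\infty$. That reduction tacitly assumes $\{\mathbf{H}_k\}$ is bounded. For $\mathbf{H}_k = \mathrm{diag}(1/k, k^2, 1, \ldots, 1)$ one has $\lambda_{\min}(\mathbf{H}_k)\downarrow 0$, yet $-\log\det\mathbf{H}_k = -\log k \to -\infty$. So the paper's argument proves the lemma only for bounded sequences. In Proposition~\ref{existence} it is Lemma~\ref{CoerciveatInf} that supplies the missing boundedness of minimising sequences.

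You instead use the bound $\mathrm{Tr}(\mathbf{H}\mathbf{G}) \geq \lambda_{\min}(\mathbf{G})\,\mathrm{Tr}(\mathbf{H})$, which the paper only invokes in the next lemma. This gives an eigenvalue-separable minorant $\sum_j \phi(\lambda_j)$ with $\phi$ bounded below. It controls exactly the case the paper misses, where other eigenvalues blow up while $\lambda_{\min}\downarrow 0$, so it proves the statement as written for arbitrary sequences, and without monotonicity. Because $\phi(x)\to+\infty$ also as $x\to\infty$, the same estimate yields Lemma~\ref{CoerciveatInf} and compactness of sublevel sets in one stroke.

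The trade-off is that your argument genuinely relies on $\lambda_{\min}(\mathbf{G}) > 0$. The paper's bounded-case reasoning needs only that the trace term be bounded on bounded sets, which holds for any $\mathbf{G}$.
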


\begin{proof}
Let the eigenvalues of $\mathbf{H}_k$ be ordered as $\lambda_{1}(\mathbf{H}_k),\ldots,\lambda_{s}(\mathbf{H}_k)>0$ with $\min_i \lambda_{i}(\mathbf{H}_k) \rightarrow 0$.
Using the properties of the trace and logarithmic determinant functions as well as the symmetry of $\bar{\mathbf{Y}}$ and $\mathbf{Y}_B$, the objective function in \eqref{Coneopt} at $\mathbf{H}_k$ expands to
$$
\mathcal{J}(\mathbf{H}_k) = \mathrm{Tr} \Bigl ( \mathbf{H}_k \bigl ( 2\mathbf{Y}_B + \theta \bar{\mathbf{Y}} \bigr ) \Bigr ) - (2+\theta) \, \log \det \mathbf{H}_k + 2 \log \det \mathbf{Y}_B - (2+\theta)s.
$$
Neglecting constant terms and noting that the trace term remains bounded from below on bounded sets, it suffices to show that for the logarithmic determinant term we have
$$
- \log \det \mathbf{H}_k = - \sum_{i=1}^s \log \lambda_{i}(\mathbf{H}_k) \rightarrow + \infty \quad \text{as} \quad \min_i \lambda_{i}(\mathbf{H}_k) \rightarrow 0,
$$
that causes $\mathcal{J}(\mathbf{H}_k) \rightarrow +\infty$ at the boundary. In essence, the objective function acts as a logarithmic barrier on the spectrum of $\mathbf{H}$ and in turn minimising sequences are bounded away from the boundary of $\mathds{C}^s_{+}$ \cite{Zenios}.
\end{proof}

To verify the coercive behaviour of the objective when the spectral norm $\|\mathbf{H}\|$ grows unbounded we pose the following lemma.  

\begin{lemma}\label{CoerciveatInf}
Under the conditions of theorem \ref{dualoptcomvexity} 
$$
\mathcal{J}(\mathbf{H}) \rightarrow + \infty \quad \text{as} \quad \|\mathbf{H}\| \rightarrow +\infty. 
$$
\end{lemma}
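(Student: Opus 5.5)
We use the expanded form of the objective derived in the proof of Lemma \ref{Boundaryblowup}:
$$
\mathcal{J}(\mathbf{H}) = \mathrm{Tr}\bigl(\mathbf{H}\mathbf{G}\bigr) - (2+\theta)\log\det\mathbf{H} + \mathrm{const}, \qquad \mathbf{G} \doteq 2\mathbf{Y}_B + \theta\bar{\mathbf{Y}},
$$
where the constant $2\log\det\mathbf{Y}_B - (2+\theta)s$ does not depend on $\mathbf{H}$. By the hypothesis of Theorem \ref{dualoptcomvexity}, $\mathbf{G} \succ 0$, so $g \doteq \lambda_{\min}(\mathbf{G}) > 0$. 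The proof therefore reduces to comparing a term that grows linearly in the eigenvalues of $\mathbf{H}$ against one that grows only logarithmically.

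For the trace term, write $\mathbf{H} = \mathbf{Q}\boldsymbol{\Lambda}\mathbf{Q}^\top$ with eigenvalues $\lambda_1,\ldots,\lambda_s > 0$. Then
$$
\mathrm{Tr}(\mathbf{H}\mathbf{G}) = \mathrm{Tr}(\mathbf{H}^{\frac 1 2}\mathbf{G}\mathbf{H}^{\frac 1 2}) \geq g\,\mathrm{Tr}(\mathbf{H}) = g\sum_{i=1}^s \lambda_i,
$$
using $\mathbf{G} \succeq g\mathbf{I}$ and the fact that congruence preserves the Loewner order. Combining this with the log-determinant term,
$$
\mathcal{J}(\mathbf{H}) - \mathrm{const} \geq \sum_{i=1}^s \phi(\lambda_i), \qquad \phi(\lambda) \doteq g\lambda - (2+\theta)\log\lambda.
$$
The scalar function $\phi$ is convex on $(0,\infty)$ and attains its minimum at $\lambda = (2+\theta)/g$. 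Its minimum value is $\phi_{\min} = (2+\theta)\bigl(1 - \log((2+\theta)/g)\bigr)$, which is finite, so $\phi$ is bounded below. Moreover $\phi(\lambda) \to +\infty$ as $\lambda \to +\infty$, because the linear term dominates the logarithm.

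Now let $\|\mathbf{H}\| = \lambda_{\max}(\mathbf{H}) \to +\infty$. One summand, $\phi(\lambda_{\max})$, tends to $+\infty$, while each of the remaining $s-1$ summands is at least $\phi_{\min}$. Hence
$$
\mathcal{J}(\mathbf{H}) \geq \phi(\lambda_{\max}) + (s-1)\phi_{\min} + \mathrm{const} \to +\infty.
$$

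The main obstacle is that the log-determinant term can be large and negative when several eigenvalues blow up simultaneously. The point of decoupling into the separable lower bound $\sum_i \phi(\lambda_i)$ is that it handles this cleanly, since each eigenvalue's linear gain dominates its own logarithmic loss. The only other care needed is to note that the bound holds uniformly, with no assumption on the eigenvectors of $\mathbf{H}$, which is guaranteed by the congruence step.

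Combined with Lemma \ref{Boundaryblowup}, this shows that the sublevel sets of $\mathcal{J}$ are compact in $\mathds{C}^s_{+}$, so a minimiser exists.
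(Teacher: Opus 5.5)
Your proof is correct, and it takes a somewhat different route from the paper's. The first step is the same as the paper's: you write $\mathcal{J}(\mathbf{H}) = \mathrm{Tr}(\mathbf{H}\mathbf{K}) - (2+\theta)\log\det\mathbf{H} + \mathrm{const}$ with $\mathbf{K}$ (your $\mathbf{G}$) $= 2\mathbf{Y}_B + \theta\bar{\mathbf{Y}}$, and bound $\mathrm{Tr}(\mathbf{H}\mathbf{K}) \geq \lambda_{\min}(\mathbf{K})\,\mathrm{Tr}(\mathbf{H})$ by congruence. You then treat the log-determinant differently. The paper applies AM--GM, $(\det\mathbf{H})^{1/s} \leq \frac{1}{s}\mathrm{Tr}(\mathbf{H})$, to reduce everything to the single scalar $x = \mathrm{Tr}(\mathbf{H})$, giving $\mathcal{J}(\mathbf{H}) \geq \lambda_{\min}(\mathbf{K})\,x - (2+\theta)s\log x + \mathrm{const}$. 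You instead keep the eigenvalues separate and bound $\mathcal{J}$ below by $\sum_i \phi(\lambda_i)$, where $\phi$ is bounded below.

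The paper's version is marginally shorter, but yours gives more. Since $\phi(\lambda) \to +\infty$ also as $\lambda \downarrow 0$, the estimate $\mathcal{J}(\mathbf{H}) \geq \phi(\lambda_j) + (s-1)\phi_{\min} + \mathrm{const}$, valid for any single eigenvalue $\lambda_j$, proves Lemma \ref{Boundaryblowup} at the same time. It also shows directly that every sublevel set $\{\mathbf{H} : \mathcal{J}(\mathbf{H}) \leq \alpha\}$ confines each eigenvalue to a compact subinterval of $(0,\infty)$, which is exactly the compactness used in Proposition \ref{existence}. In particular, your bound covers the mixed regime where $\lambda_{\min}(\mathbf{H})\downarrow 0$ while $\lambda_{\max}(\mathbf{H})\to\infty$. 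There $-\log\det\mathbf{H}$ need not diverge, and the paper's boundary argument, which reasons only through that term, does not strictly apply on its own. You could therefore drop the appeal to Lemma \ref{Boundaryblowup} in your final sentence.

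There is a harmless slip inherited from the paper: the $\mathbf{H}$-independent constant is actually $-2\log\det\mathbf{Y}_B - \theta\log\det\bar{\mathbf{Y}} - (2+\theta)s$, not $2\log\det\mathbf{Y}_B - (2+\theta)s$. It plays no role in the argument.
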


\begin{proof}
Define $\mathbf{K} \doteq 2 \mathbf{Y}_B + \theta \bar{\mathbf{Y}}$. With $\mathbf{K} \succ 0$ as per the conditions of the theorem, the objective function can be cast as
\begin{equation}\label{objfunct}
\mathcal{J}(\mathbf{H}) = \mathrm{Tr} \bigl ( \mathbf{H} \mathbf{K} \bigr ) - (2+\theta) \, \log \det \mathbf{H} + \text{constant}.
\end{equation}
Since $\mathbf{K} \succeq \lambda_{\min}(\mathbf{K}) \mathbf{I}$ with $\lambda_{\min}(\mathbf{K})>0$, then for any $\mathbf{H} \succ 0$ we have 
$\mathbf{H}^{\frac 1 2}\mathbf{K} \mathbf{H}^{\frac 1 2} \succeq \lambda_{\min}(\mathbf{K}) \mathbf{H}$, and then by the cyclic property of the trace we get
\begin{equation}\label{trace_linear_term}
\mathrm{Tr}(\mathbf{H}\mathbf{K}) = \mathrm{Tr}(\mathbf{H}^{\frac 1 2}\mathbf{K}\mathbf{H}^{\frac 1 2}) \geq \lambda_{\min}(\mathbf{K}) \, \mathrm{Tr}(\mathbf{H}).
\end{equation}
To address the logarithmic term in the objective we appeal to the inequality of arithmetic and geometric means on the eigenvalues of the $s \times s$ matrix $\mathbf{H}$ that asserts ${\det \mathbf{H}}^{\frac 1 s} \leq \frac 1 s \mathrm{Tr}(\mathbf{H})$. Taking the negative logarithm and rearranging yields
\begin{equation}\label{trace_log_term}
- \log \det \mathbf{H} \geq -s \log \bigl ( \frac 1 s \, \mathrm{Tr}(\mathbf{H}) \bigr ) = s \log s - s \log \mathrm{Tr}(\mathbf{H}).
\end{equation}
Introducing \eqref{trace_linear_term} and \eqref{trace_log_term} into \eqref{objfunct} we get
$$
\mathcal{J}(\mathbf{H}) \geq \lambda_{\min}(\mathbf{K}) \, \mathrm{Tr}(\mathbf{H}) - (2 + \theta) s \log \, \mathrm{Tr}(\mathbf{H}).
$$
Since $\| \mathbf{H}\| \rightarrow + \infty$ implies $\mathrm{Tr}(\mathbf{H}) \rightarrow +\infty$, then as the linear trace term is positive and dominates the logarithmic one near infinity then coercivity  $\mathcal{J}(\mathbf{H}) \rightarrow + \infty$ follows.
\end{proof}
Note that under the adopted SPD assumption on the two sketches $2\mathbf{Y}_B + \theta \bar{\mathbf{Y}} \succ 0$, a sublevel set $\bigl \{\mathbf{H} \in \mathds{C}^s_+ \, \cdot \, \mathcal{J}(\mathbf{H}) \leq \alpha \bigr \}$ at any finite $\alpha$ is bounded by lemma \ref{CoerciveatInf} as well as being bounded away from the boundary of $\mathds{C}_+^s$ by lemma \ref{Boundaryblowup}. Hence sublevel sets are contained in a compact set 
\begin{equation}
\mathds{K}_{\alpha_1,\alpha_2} \doteq \Bigl \{ \mathbf{H} \in \mathds{C}_+^s \, \cdot \, \mathrm{Tr}(\mathbf{H}) \leq \alpha_1 \, , \, \lambda_{\min}(\mathbf{H}) \geq \alpha_2 \Bigr \},    
\end{equation}
which is closed and bounded in finite dimensions for some positive $\alpha_1,\, \alpha_2$. With this set in place we can consolidate the existence proof for the new estimator in the form of the following proposition. 

\begin{proposition}\label{existence}
If $2 \mathbf{Y}_B + \theta \bar{\mathbf{Y}} \succ 0$ then $\mathcal{J}$ attains its minimum value on $\mathds{C}_+^s$, that is
\begin{equation}\label{exist}
 \exists \; \mathbf{H}^\star \in   \mathds{C}_+^s \quad \text{such that} \quad \mathcal{J}(\mathbf{H}^\star)= \inf_{\mathbf{H} \in \mathds{C}_+^s} \mathcal{J}(\mathbf{H}).
\end{equation}
\end{proposition}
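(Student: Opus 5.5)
The plan is the standard Weierstrass argument on a compact sublevel set. First, pick any reference point in the cone, for instance $\mathbf{H}_0 = \mathbf{I}$, and set $\alpha \doteq \mathcal{J}(\mathbf{H}_0) < +\infty$. Writing $\mathbf{K} \doteq 2\mathbf{Y}_B + \theta\bar{\mathbf{Y}} \succ 0$ as in the proof of lemma \ref{CoerciveatInf}, the objective is
$$
\mathcal{J}(\mathbf{H}) = \mathrm{Tr}(\mathbf{H}\mathbf{K}) - (2+\theta)\log\det\mathbf{H} + \text{constant}.
$$
This function is continuous on the open cone $\mathds{C}^s_+$, since trace and log-determinant are continuous there. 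Hence the sublevel set $\mathcal{S}_\alpha \doteq \{\mathbf{H} \in \mathds{C}^s_+ \,\cdot\, \mathcal{J}(\mathbf{H}) \leq \alpha\}$ is nonempty and relatively closed in $\mathds{C}^s_+$. Moreover $\inf_{\mathds{C}^s_+}\mathcal{J} = \inf_{\mathcal{S}_\alpha}\mathcal{J}$.

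Second, show that $\mathcal{S}_\alpha$ lies in a compact set $\mathds{K}_{\alpha_1,\alpha_2}$ strictly inside the cone. This needs quantitative versions of lemmas \ref{Boundaryblowup} and \ref{CoerciveatInf}, since those are stated only along sequences; a uniform bound is the cleanest route.

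\emph{Trace bound.} From \eqref{trace_linear_term} and \eqref{trace_log_term},
$$
\alpha \geq \mathcal{J}(\mathbf{H}) \geq \lambda_{\min}(\mathbf{K})\,\mathrm{Tr}(\mathbf{H}) - (2+\theta)s\log\mathrm{Tr}(\mathbf{H}) + \text{const},
$$
and the right side tends to $+\infty$ as $\mathrm{Tr}(\mathbf{H}) \to \infty$. Hence $\mathrm{Tr}(\mathbf{H}) \leq \alpha_1$ for some finite $\alpha_1$, so all eigenvalues of $\mathbf{H}$ are at most $\alpha_1$.

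\emph{Smallest-eigenvalue bound.} Using $\mathrm{Tr}(\mathbf{H}\mathbf{K}) \geq 0$ and splitting off the smallest eigenvalue,
$$
-\log\det\mathbf{H} \geq -\log\lambda_{\min}(\mathbf{H}) - (s-1)\log\alpha_1.
$$
Then $\mathcal{J}(\mathbf{H}) \leq \alpha$ forces $-(2+\theta)\log\lambda_{\min}(\mathbf{H})$ to be bounded above, which gives $\lambda_{\min}(\mathbf{H}) \geq \alpha_2 > 0$.

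This step is the main obstacle. Lemma \ref{Boundaryblowup} as stated says the trace term is "bounded below on bounded sets" and is phrased via monotone sequences. I need the uniform two-sided version above, which uses $\mathbf{K} \succ 0$ to bound the trace term below by zero and uses the trace bound $\alpha_1$ to control the remaining eigenvalues.

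Third, conclude. The set $\mathds{K}_{\alpha_1,\alpha_2}$ is closed and bounded in the finite-dimensional space of symmetric matrices, and it lies in the interior of the cone because $\lambda_{\min}$ is continuous. So $\mathcal{S}_\alpha$ is a closed subset of a compact set on which $\mathcal{J}$ is continuous. By Weierstrass, $\mathcal{J}$ attains its infimum over $\mathcal{S}_\alpha$ at some $\mathbf{H}^\star$, and this is the infimum over all of $\mathds{C}^s_+$, giving \eqref{exist}.

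Finally, by the strict convexity in theorem \ref{dualoptcomvexity}, this minimiser is unique. Since it is an interior point, the first-order condition applies, so it coincides with \eqref{Hoptsol}. One can also check directly that $(2+\theta)\mathbf{K}^{-1}$ is SPD and zeroes the gradient \eqref{gradJ}.
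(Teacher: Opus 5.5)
Your proof is correct and follows the same compactness (direct-method) argument as the paper: you apply Weierstrass on a sublevel set, where the paper extracts a convergent subsequence from a minimising sequence. Your explicit uniform bounds on $\mathrm{Tr}(\mathbf{H})$ and $\lambda_{\min}(\mathbf{H})$ make rigorous the containment in $\mathds{K}_{\alpha_1,\alpha_2}$, which the paper only infers from its sequence-based Lemmas \ref{Boundaryblowup} and \ref{CoerciveatInf}.
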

\begin{proof}
Let $\{\mathbf{H}_k\} \subset \mathds{C}_+^s$ be a minimising sequence with $\mathcal{J}(\mathbf{H}_k) \downarrow \inf \mathcal{J}$. From lemmas \ref{Boundaryblowup} and \ref{CoerciveatInf}, the sequence must remain in some set $ \mathds{K}_{\alpha_1,\alpha_2}$ so it is relatively compact. Suppose we extract the $\ell$-th convergent subsequence as $\mathbf{H}_{k_\ell} \to \mathbf{H}^\dagger \in \mathds{K}_{\alpha_1,\alpha_2} \subset \mathds{C}^+_s$, then since $\mathcal{J}$ is twice continuously differentiable on the cone $\mathds{C}_+^s$ we evidently have
$$
\mathcal{J}(\mathbf{H}^\dagger) = \lim_{\ell \to \infty} \mathcal{J}(\mathbf{H}_{k_\ell}) = \inf_{\mathbf{H} \in \mathds{C}_+^s} \mathcal{J}(\mathbf{H}),
$$
and hence $\mathbf{H}^\dagger$ must be a minimiser.
\end{proof}

It now remains to discuss how to choose the regularisation parameter $\theta$ in \eqref{Coneopt}. 
Intuitively, $\theta$ should be chosen in a way that accounts for the `uncertainty' due to sketching-induced errors in $\mathbf{Y}_B$ and $\bar{\mathbf{Y}}$. Regularisation parameter selection has been central to the development of robust algorithms for ill-posed inverse problems, in particular variational problems in imaging where $\ell_2$, $\ell_1$ and total variation norms are inherent  \cite{Benning_Burger_2018}, but as we are dealing exclusively with Stein's loss distances it is important to understand the impact of these errors in our setting. Let us denote the our new estimator from \eqref{Hoptsol} as $\mathbf{H}_\theta$ to
emphasise its dependence on the regularisation parameter, and consider 
$$
\mathbf{Y}_B \doteq \mathbf{Y} + \mathbf{E}_B, \quad \bar{\mathbf{Y}} \doteq \mathbf{Y} + \bar{\mathbf{E}}, \quad \text{and} \quad \mathbf{H}_\theta \doteq \mathbf{Y}^{-1} + \boldsymbol{\Delta}_\mathbf{H}, 
$$
assuming that $\|\mathbf{E}_B\|$, $\|\bar{\mathbf{E}}\|$ and $\|\boldsymbol{\Delta}_\mathbf{H}\|$ are sufficiently small. Taking a first-order approximation of the Neumann series for the inverse function we have
$$
\mathbf{H}_\theta = \Bigl ( \mathbf{I} + \frac{1}{2+\theta}\mathbf{Y}^{-1}(2 \mathbf{E}_B + \theta \bar{\mathbf{E}}) \Bigr )^{-1} \mathbf{Y}^{-1} \approx \mathbf{Y}^{-1} - \frac{1}{2+\theta}\mathbf{Y}^{-1}(2 \mathbf{E}_B + \theta \bar{\mathbf{E}}) \mathbf{Y}^{-1},
$$
and hence the error in the inverse becomes
\begin{equation}\label{DeltaHparametric}
\boldsymbol{\Delta}_\mathbf{H} = \mathbf{H}_\theta - \mathbf{Y}^{-1} \approx  - \frac{1}{2+\theta}\mathbf{Y}^{-1}(2 \mathbf{E}_B + \theta \bar{\mathbf{E}}) \mathbf{Y}^{-1},   
\end{equation}
providing an explicit, $\theta$-parametrised expression that relates the forward and inverse sketch errors to that of the optimal solution. As $\boldsymbol{\Delta}_\mathbf{H}$ does not feature in the loss function directly we must seek a connection to the Stein loss terms so that we can relate those to $\theta$. Taking for example the term involving the forward sketch, notice that for $\mathbf{H}_\theta \mathbf{Y}_B \approx \mathbf{I}$ we have
\begin{align*}
    \mathcal{L}(\mathbf{H}_\theta \mathbf{Y}_B, \mathbf{I}) & = \mathrm{Tr} (\mathbf{H}_\theta \mathbf{Y}_B - \mathbf{I}) + s - \log\det (\mathbf{H}_\theta \mathbf{Y}_B) - s\\
    & \approx \mathrm{Tr} (\mathbf{H}_\theta \mathbf{Y}_B - \mathbf{I}) - \mathrm{Tr} \bigl ( (\mathbf{H}_\theta \mathbf{Y}_B - \mathbf{I}) - \frac 1 2 (\mathbf{H}_\theta \mathbf{Y}_B - \mathbf{I})^2 \bigr ) = \frac 1 2 \bigl \|\mathbf{H}_\theta \mathbf{Y}_B - \mathbf{I} \bigr \|_\mathrm{F}^2,
\end{align*}
where we have used the identity $\log \det (\mathbf{H}_\theta \mathbf{Y}_B) = \mathrm{Tr} \log (\mathbf{H}_\theta \mathbf{Y}_B)$, and then approximated the matrix logarithm function by its second-order Neumann series, under the assumed proximity of $\mathbf{H}_\theta \mathbf{Y}_B$ to the identity matrix. As $\mathbf{H}_\theta \mathbf{Y}_B = \mathbf{I} + \boldsymbol{\Delta}_\mathbf{H}\,  \mathbf{Y} + \mathbf{Y}^{-1}\mathbf{E}_B + \boldsymbol{\Delta}_\mathbf{H}\,  \mathbf{E}_B$, then neglecting the second-order term $\boldsymbol{\Delta}_\mathbf{H} \, \mathbf{E}_B$ and introducing into the approximate Stein loss we arrive at
$$
\mathcal{L}(\mathbf{H}_\theta \mathbf{Y}_B, \mathbf{I}) \approx \frac 1 2 \bigl  \| \boldsymbol{\Delta}_\mathbf{H}\,  \mathbf{Y} + \mathbf{Y}^{-1}\mathbf{E}_B \bigr \|_\mathrm{F}^2 = \frac 1 2 \bigl \|\mathbf{Y}^{\frac 1 2} \, \boldsymbol{\Delta}_\mathbf{H} \, \mathbf{Y}^{\frac 1 2} + \mathbf{Y}^{-\frac 1 2} \mathbf{E}_B \mathbf{Y}^{-\frac 1 2} \bigr \|_\mathrm{F}^{2}
$$
where the equality is by the cyclic property of the trace. Combining with \eqref{DeltaHparametric} yields
\begin{equation}
\mathcal{L}(\mathbf{H}_\theta \mathbf{Y}_B, \mathbf{I}) \approx \frac{1}{2(2+\theta)^2} \bigl \| (4+\theta)\mathbf{Y}^{-\frac 1 2} \mathbf{E}_B \mathbf{Y}^{-\frac 1 2} + \theta \mathbf{Y}^{-\frac 1 2} \bar{\mathbf{E}} \mathbf{Y}^{-\frac 1 2} \bigr \|_\mathrm{F}^2,  
\end{equation}
that shows explicitly that both the forward and inverse errors propagate to the loss function after transformed by a left and right multiplication with $\mathbf{Y}^{-\frac 1 2}$. The case of the Stein's loss function involving the inverse sketch follows similarly. Consequently, to find the optimal $\theta$ parameter we can consider 
\begin{equation}\label{min_theta_exp}
\theta^\star = \arg\min_{\theta>0} \mathbb{E} \bigl [ \bigl \| \mathbf{Y}^{\frac 1 2} \, \boldsymbol{\Delta}_\mathbf{H} \, \mathbf{Y}^{\frac 1 2} \bigr  \|_{\mathrm{F}}^{2} \bigr ],    
\end{equation}
which after importing $\boldsymbol{\Delta}_\mathbf{H}$ from \eqref{DeltaHparametric} becomes
\begin{align*}
\theta^\star = & \arg\min_{\theta>0} \frac{1}{(2+\theta)^2} \mathbb{E} \Bigl [ \bigl \| 2 \mathbf{Y}^{-\frac 1 2} \mathbf{E}_B \mathbf{Y}^{-\frac 1 2} + \theta \mathbf{Y}^{-\frac 1 2} \bar{\mathbf{E}} \mathbf{Y}^{-\frac 1 2} \bigr \|_\mathrm{F}^2 \Bigr ].
\end{align*}
Setting the derivative with respect to $\theta$ equal to zero gives the optimal value of the regularisation parameter
\begin{equation}\label{thetaopt}
\theta^\star = 2 \frac{\mathbb{E} \bigl [ \|\mathbf{Y}^{-\frac 1 2} \mathbf{E}_B\mathbf{Y}^{-\frac 1 2} \|_\mathrm{F}^2 \bigr ] - \mathbb{E} \bigl [ \langle \mathbf{Y}^{-\frac 1 2} \mathbf{E}_B \mathbf{Y}^{- \frac 1 2}, \mathbf{Y}^{-\frac 1 2} \bar{\mathbf{E}} \mathbf{Y}^{- \frac 1 2} \rangle_\mathrm{F} \bigr ] }{ \mathbb{E} \bigl [ \|\mathbf{Y}^{-\frac 1 2} \bar{\mathbf{E}}\mathbf{Y}^{-\frac 1 2} \|_\mathrm{F}^2 \bigr ] - \mathbb{E} \bigl [ \langle \mathbf{Y}^{-\frac 1 2} \mathbf{E}_B \mathbf{Y}^{- \frac 1 2}, \mathbf{Y}^{-\frac 1 2} \bar{\mathbf{E}} \mathbf{Y}^{- \frac 1 2} \rangle_\mathrm{F} \bigr ] }.    
\end{equation}
As the covariance term has to be smaller than the Frobenius variance norms for boundedness and positivity, the above can be approximated for compute savings to 
\begin{equation}\label{thetasubopt}
\theta^\star \approx 2 \frac{\mathbb{E} \bigl [ \|\mathbf{Y}^{-\frac 1 2} \mathbf{E}_B\mathbf{Y}^{-\frac 1 2} \|_\mathrm{F}^2 \bigr ]}{\mathbb{E} \bigl [ \|\mathbf{Y}^{-\frac 1 2} \bar{\mathbf{E}}\mathbf{Y}^{-\frac 1 2} \|_\mathrm{F}^2 \bigr ]},  
\end{equation}
but in either case $\mathbf{Y}$ is unknown and thus we must invoke a suitable surrogate for it, either through independent sampling or by appealing to its control variates counterpart. On the other hand, if the matrix is expected to be well-conditioned, e.g. when the parameters do not vary wildly, we may further approximate $\mathbf{Y} \approx \mathbf{I}$ and thus replace  \eqref{thetasubopt} with
$$
\hat{\theta} = 2 \frac{\mathbb{E} \bigl [ \|\mathbf{E}_B\|_\mathrm{F}^2 \bigr ]}{\mathbb{E} \bigl [ \|\bar{\mathbf{E}} \|_\mathrm{F}^2 \bigr ]} = 2 \frac{\mathbb{V}(\mathbf{E}_B)}{\mathbb{V}(\bar{\mathbf{E}})} = 2 \frac{\mathbb{V}(\mathbf{Y}_B)}{\mathbb{V}(\bar{\mathbf{Y}})}
$$
noting that the Frobenius variances are readily available without any additional effort from the control variates process.

Before we outline our algorithm there is an important observation to be made regarding the sampling process. Exploiting that the per-sketch budget $c$ is deliberately kept small in comparison to $N$ as we pass on the control to increase the overall sample to $\nu$, when $c/N \ll 10^{-3}$ is very small we opt to implement the Bernoulli sampling indirectly, effectively reducing the per-sketch complexity from $\mathcal{O}(N)$ to $\mathcal{O}(c)$ and thereby achieving a radical speed up of the row sampling process. As outlined in algorithm \ref{alg:fast_thinning}, we can regard the independent sampling of about $c$ of the $N$ rows of a tall matrix as a `sequence' of independent trials under a homogeneous Bernoulli process with probability  $\eta_{\max}=\min_i \eta_i < 1$. If that were to be the case, then the small number of rows can be sampled by exploiting the fact that the interval between successively selected indices in this sequence follows a geometric distribution with parameter $\eta_{\max}$ \cite{bertsekas2008introduction}. The saving comes in that after the first row is selected, we then sample by performing random jumps skipping over row intervals with a Geometric distribution of parameter $\eta_{\max}$ rather than Bernoulli sampling of individual rows. As the Bernoulli probabilities $\boldsymbol{\eta}$ and $\boldsymbol{\tilde{\eta}}$ are not identical however, this geometric skipping is then followed by a rejection sampling scheme (also referred to as Bernoulli thinning) to compensate for any $\eta_i \neq \eta_{\max}$. Once a row is selected by the geometric sampling, then this is only accepted with probability $\eta_i/\eta_{\max}$. Using basic probability rules, if $\mathbb{P}(\gamma_i^{\max}=1) =\eta_{\max}$, and $\mathbb{P}(\gamma_i^{\mathrm{rej}}=1) = \eta_i / \eta_{\max}$ the $i$-th row is selected iff it gets selected in the geometric skipping and does not get rejected in the thinning process, which turns to be equivalent to the intended Bernoulli sampling since $\mathbb{P}(\gamma_i=1) = \mathbb{P}(\gamma_i^{\max}=1) \, \mathbb{P}(\gamma_i^{\mathrm{rej}}=1 \, | \, \gamma_i^{\max} = 1) = \eta_i$. This process is outlined in some detail in the description of the algorithm \ref{alg:fast_thinning} next as part of the main algorithm for our method presented in algorithm \ref{alg:main}.

\begin{algorithm}[H] 
\caption{Low-variance parametric sketched solver}
\label{alg:main}
\begin{algorithmic}[1] 
\STATE \textbf{Input:} Parameters $\mathbf{P}$, probabilities $\boldsymbol{\eta}$ and $\boldsymbol{\tilde{\eta}}$, row index subsets $\{\mathcal{I}_1,\ldots, \mathcal{I}_\mu\}$, per-sketch row index budget $c$, sketches budget $\nu$, and invertible matrix $\boldsymbol{\Sigma}\mathbf{V}^\top \in \mathds{R}^{s \times s}$ 
\STATE \textbf{Require access} to individual rows of $\mathbf{U} \in \mathds{R}^{N \times s}$ and $\mathbf{U}_\partial \in \mathds{R}^{m \times s}$
\STATE \textbf{Output:} Low-variance sketched projected solution $\bar{\mathbf{w}}_\theta$

\STATE Estimate the coefficients of the low-dimensional $\mathbf{T}, \mathbf{R} \in \mathds{L}$ 
\STATE Evaluate control variates matrices $\mathbf{Y}(\mathbf{T}) \in \mathds{R}^{s \times s}$ and $\mathbf{Z}(\mathbf{R}) \in \mathds{R}^{m \times s}$
\STATE Run sampling \textbf{Algorithm \ref{alg:fast_thinning}} with inputs ($c$, $\boldsymbol{\eta}$, $\mathbf{P}$, $\mathbf{T}$, $\nu$), and ($c$, $\boldsymbol{\tilde{\eta}}$, $\mathbf{P}$, $\mathbf{R}$, $\nu$) 
    \STATE Compute multi-sketch averages  $\bar{\mathbf{Y}}(\mathbf{P})$, $ \bar{\mathbf{Y}}(\mathbf{T})$, $ \bar{\mathbf{Z}}(\mathbf{P})$ and $\bar{\mathbf{Z}}(\mathbf{R})$  
\STATE Estimate element-wise covariances $\mathbb{C} \bigl (\bar{\mathbf{Y}}(\mathbf{P}), \bar{\mathbf{Y}}(\mathbf{T}) \bigr )$ and  $\mathbb{C} \bigl (\bar{\mathbf{Z}}(\mathbf{P}), \bar{\mathbf{Z}}(\mathbf{R}) \bigr )$
\STATE Estimate weights $\hat{\mathbf{B}}$ and $\hat{\mathbf{B}}_Z$ and parameter $\theta$
\STATE Evaluate the low-variance estimators $\mathbf{Y}_B(\mathbf{P})$, $\mathbf{Z}_B(\mathbf{P})$ and $\bar{\mathbf{q}}_B(\mathbf{P})$
\STATE Compute $\mathbf{H} = (2+\theta)\bigl (2\mathbf{Y}_B(\mathbf{P})+ \theta \bar{\mathbf{Y}}(\mathbf{P}) \bigr )^{-1}$ 
\STATE Compute $\bar{\mathbf{v}}_\theta = \mathbf{H} \, \mathbf{q}_B(\mathbf{P})$
\STATE \textbf{Return:} $\bar{\mathbf{w}}_\theta = (\boldsymbol{\Sigma} \mathbf{V}^\top)^{-1} \bar{\mathbf{v}}_\theta$ 
\end{algorithmic}
\end{algorithm}

\begin{algorithm}
\caption{Bernoulli sampling via Geometric Skipping \& Bernoulli Thinning}
\label{alg:fast_thinning}
\begin{algorithmic}[1]
\STATE \textbf{Input:} Parameters $\mathbf{P}$, $\mathbf{T}$, probabilities $\boldsymbol{\eta}$, and iterations $\nu$.
\STATE \textbf{Require access} to individual rows of $\mathbf{U} \in \mathds{R}^{N \times s}$ 
\STATE \textbf{Output:} Sampled $\{\hat{\mathbf{Y}}_t(\mathbf{P})\}_{t=1}^{\nu}$, and $\{\hat{\mathbf{Y}}_t(\mathbf{T})\}_{t=1}^{\nu}$ 
\STATE \textbf{Initialize:}  $\eta_{\max} = \max(\boldsymbol{\eta})$.
\FOR{$t = 1$ \textbf{to} $\nu$}
\STATE $k \gets 0$, $\mathcal{I} \gets \emptyset$
\WHILE{$k < N$}
    \STATE \textbf{Geometric sampling with probability $\eta_{\max}$}:
    \STATE $k \gets k + \lceil \log(u) \, / \log(1 - \eta_{\max}) \rceil$ with $u \sim \mathcal{U}(0,1)$.
    \STATE \textbf{Rejection sampling}: 
    \STATE $\mathcal{I} \gets \mathcal{I} \cup \{k\}$ \textbf{if} $v < \eta_k / \eta_{\max}$ where $v \sim \mathcal{U}(0,1)$.
    \ENDWHILE
    \STATE  \textbf{Extract sampled rows} $\mathbf{U}_{(\mathcal{I})}$ and diagonals $\mathbf{P}_{(\mathcal{I}, \mathcal{I})}$ and $\mathbf{T}_{(\mathcal{I}, \mathcal{I})}$ 
    \STATE \textbf{Return:} $\hat{\mathbf{Y}}_t(\mathbf{P}) \gets \mathbf{U}_{(\mathcal{I})}^\top \mathbf{P}_{(\mathcal{I}, \mathcal{I})} \mathbf{U}_{(\mathcal{I})}$, \; $\hat{\mathbf{Y}}_t(\mathbf{T}) \gets \mathbf{U}_{(\mathcal{I})}^\top \mathbf{T}_{(\mathcal{I}, \mathcal{I})} \mathbf{U}_{(\mathcal{I})}$ .
\ENDFOR
\end{algorithmic}
\end{algorithm}

\section{Numerical experiments}

To test the performance of our main algorithm \ref{alg:main} we consider the inhomogeneous Poisson equation with variable coefficients as in PDE \eqref{pde} in two and three physical dimensions. Aside the numerical evidence on the practical performance of our approach, ultimately we aim to verify how much of the variance inflation incurred by not using the optimal, parameter-dependent, leverage scores can be mitigated by the synergistic action of the control variates scheme and the `forward-inverse' sketch fusion. Since our parameter-oblivious sampling is effectively optimal for constant parameters (spatial homogeneity), it is key to understand how does the variation in the profile of the parameters affects the performance of our method and how do we set the sample budget $(\nu, c)$ and control variates dimension $\mu$ to achieve the best results possible. To be systematic, we will describe this variability in the parameters by means of the coefficient of variation (CoV) 
\begin{equation}\label{coefvar}
c_V(\mathbf{P}) = \sqrt{N \, \frac{\|\mathrm{diag}(\mathbf{P})\|^2}{\|\mathrm{diag}(\mathbf{P})\|_1^2} - 1},
\end{equation}
that takes values in $[0,\sqrt{N-1}]$. 

\subsection{Two-dimensional example}
In the first set of numerical experiments we consider a square domain geometry $\Omega \in [-1, 1] \times [-1, 1]$ embedded on the $xy$ plane, with fixed forcing and boundary conditions as
$$
f(x,y) = \begin{cases}
 1 & \text{if} \quad 6(x^2+y^2)^2 + x^3 - 3xy^2 < 0\\
 0 & \text{otherwise}
\end{cases}, \quad u^\partial(x,y) = \frac 1 4 \sin \bigl ( \frac{\pi}{2} (x+y)\bigr ).
$$
The domain is discretised to a mesh with $n_n = 11212033$ nodes connected into $n_e = 22413312$ linear triangular elements. Among the $n_n$ nodes, $m=10752$ are on the boundary, yielding $n=11201281$ degrees of freedom and a large dimension $N=2 n_e \approx 45$ million. To generate the parameter profiles for the various choices of $\mathbf{P}$ diagonals we restrict to isotropic tensor fields. In each instance, we simulate between 36 and 81 discs of arbitrarily chosen radii and scatter them randomly within the closure of the domain, see for example the image on the left of figure  \ref{fig:imageslowCov}. Each of these disks is assigned a positive value from the interval $[10^{-2}, 10^{2}]$ uniformly at random, and $\mathbf{P}$ is eventually formed by adding the values of the various overlapping disks at any given element of the mesh. Using a POD-obtained subspace projection we set the small dimension to $s=200$, and then fix the per-sketch sample budget to $c=\lceil 5 s \log s \rceil \approx 5300 $, effectively corresponding to a $\beta \geq 0.2$ in theorem \ref{concentave}. Running our solver in algorithm \ref{alg:main} on sets of 100 independently chosen $\mathbf{P}$, with $(\mu, \nu) \in \{1, 16\} \times \{10,100\}$ and $\nu \in \{10,100\}$ for parameters have low CoV and $(\mu, \nu) \in \{1, 16\} \times \{50,500\}$ for those with higher CoV yields eight sets of structured and complementary experiments whose results are graphically illustrated in figure \ref{fig:2Dresults}. In this figure the panels are arranged in increasing $c_V(\mathbf{P})$ from top row to bottom, and in increasing control variates regions $\mu$ from left to right. Columns 1 to 5 depict the statistical variation of the key error quantities $\|\mathbf{Y}^{-1} - \bar{\mathbf{Y}}^{-1}\| / \|\mathbf{Y}^{-1}\|$, $\|\mathbf{Y} - \bar{\mathbf{Y}}\|_\mathrm{F} / \|\mathbf{Y}\|_\mathrm{F}$, $\|\mathbf{Y} - \mathbf{Y}_B \|_\mathrm{F} / \|\mathbf{Y}\|_\mathrm{F}$, $\|\bar{\mathbf{w}}_\theta - \mathbf{w}\| / \|\mathbf{w}\|$, and $\|\bar{\mathbf{w}} - \mathbf{w}\| / \|\mathbf{w}\|$ over the 100 runs, while column 6 holds $c_V(\mathbf{P})$ for the same set of experiments. We thus expect that on average, i.e., where peak of violin plot is, the errors in the 3rd column are less than those in the 2nd because of the control variates correction, and those in the 4th column showing the error of our improved estimator to be less than the classical multi-sketch estimator at the 5th column, for all choices of $\nu$.   

In more detail, for the top panels in figure \ref{fig:2Dresults} where the CoV is kept moderately low, note that the relative errors of $\bar{\mathbf{Y}}^{-1}(\mathbf{P})$ in spectral norm and that of the Frobenius norm of $\bar{\mathbf{Y}}(\mathbf{P})$ appear to be on average very similar, as one would expect when $\mathbf{P}^{\frac 1 2}$ does not deviate substantially from a constant diagonal. In this situation $\boldsymbol{\xi}(\mathbf{U})$ approximates about equally well the optimal $\boldsymbol{\xi}(\mathbf{P}^{\frac 1 2} \mathbf{U})$ and $\boldsymbol{\zeta}(\mathbf{P}^{\frac 1 2} \mathbf{U},\mathbf{P}^{\frac 1 2} \mathbf{U}_\partial)$. By contrast, in the respective columns of the bottom row where $c_V(\mathbf{P})$ is larger, $\boldsymbol{\xi}(\mathbf{U})$ approximates much better $\boldsymbol{\xi}(\mathbf{P}^{\frac 1 2} \mathbf{U})$, because $\boldsymbol{\zeta}(\mathbf{P}^{\frac 1 2} \mathbf{U},\mathbf{P}^{\frac 1 2} \mathbf{U}_\partial)$ scales linearly to the parameters. This explains why for higher CoV the errors in $\bar{\mathbf{Y}}(\mathbf{P})$ are on average somewhat higher compared to those of $\bar{\mathbf{Y}}^{-1}(\mathbf{P})$. On the other hand, the errors for $\bar{\mathbf{Y}}$ over the 100 runs are significantly more concentrated than those of $\bar{\mathbf{Y}}^{-1}$ irrespectively of CoV. This behaviour arises because the direct sketching error $\bar{\mathbf{Y}} - \mathbf{Y}$ is bounded by the largest eigenvalue of the sketch, meaning its sensitivity to $\mathbf{P}$ is governed solely by variations in the largest eigenvalue of $\bar{\mathbf{Y}}(\mathbf{P})$ for each $\mathbf{P}$. Conversely, the error in the inverse $\bar{\mathbf{Y}}^{-1} - \mathbf{Y}^{-1}$ depends on the inverse of the smallest eigenvalue. This quantity is inherently larger and scales sharply as $\kappa(\mathbf{Y}(\mathbf{P}))$ grows across the parameter space as indicated by \eqref{boundfactor}. Consequently, the inverse error is significantly more sensitive to spectral outliers and exhibits greater dispersion across parameter realisations.

Comparing the errors between control variates corrected and uncorrected sketches shows that our simple scheme is always effective in reducing the variance, perhaps somewhat surprisingly even at the small ensemble sizes $\nu=10$. The reduction is more pronounced at the top row compared with the bottom, where $c_V(\mathbf{P})$ is lower, at the right column rather than the left where $\mu$ is larger and when the ensemble size $\nu$ is bigger than smaller, as this provides additional accuracy in the Monte Carlo approximation of the weights in \eqref{optweights}. Finally we compare the relative errors in the proposed low-variance estimator $\bar{\mathbf{w}}_\theta$ as in \eqref{lowvarsol} plotted in column 4 against the multi-sketch average $\bar{\mathbf{w}}$ as in \eqref{multisketchave} in column 5, subject to an important caveat. To compare them under an identical computational and sampling budget, $\bar{\mathbf{w}}$ is computed using double the ensemble size at $2 \nu$. This matches, in expectation, the total number of samples allocated to $\bar{\mathbf{w}}_\theta$, which involves $\nu$ sketches for the sought estimator and another $\nu$ sketches for its corresponding control variates matrix. Looking across the four panels of figure \ref{fig:2Dresults} we see that on either ensemble size, on average our estimator $\bar{\mathbf{w}}_\theta$ is superior in terms of the mean squared error. The improvement is considerably larger in the lower CoV regime, where as expected, the error drops with increasing $\mu$ or $\nu$. This is because a higher $\mu$ yields a higher correlation between the sketch its control variates, while a larger $\nu$ allows for more accurate estimation of the control variate weights and the regularisation parameter. In particular, after 100 independent runs for different parameters at $\nu=10$ (blue violins) the average relative errors in $\bar{\mathbf{w}}$ is 0.0084 while that of $\bar{\mathbf{w}}_\theta$ is 0.0047 for $\mu=1$ and 0.0035 for $\mu=16$. At $\nu=100$ (orange violins) the errors for $\bar{\mathbf{w}}$ average at 0.0026 whilst those for $\bar{\mathbf{w}}_\theta$ drop to 0.0014 for $\mu=1$ and 0.0011 at $\mu=16$, so overall the error is halved. Interestingly however, notice that our estimator performs only marginally better than the multi-sketch estimator in the higher CoV regime as shown in the bottom left panel where the respective average errors are much closer together at 0.0037 and 0.0040 when $\nu=50$ and then 0.0009 and 0.0012 for $\nu=500$. This indicates that the advantage of the proposed approach wears out as $\mathbf{P}$ begins to vary wildly. Our findings indicate that for CoV exceeding 2, the low-variance estimator offers no practical advantage unless a more sophisticated domain decomposition strategy is adopted for control variate assignment. Crucially however, note that in such CoV regimes, the original FEM coefficients matrix in \eqref{fempde} inherently becomes ill-conditioned. To visualise the spatial features of the sketched solution once projected to its original high-dimensional space and its discrepancy against the deterministic projected solution we plot a representative outcome in figure \ref{fig:imageslowCov} corresponding to one of the low CoV experiments. The sketching error has a distinct non-uniform profile with patterns that follow the underlying parameter field.   

\begin{figure}
\centering
\includegraphics[width=0.49\linewidth]{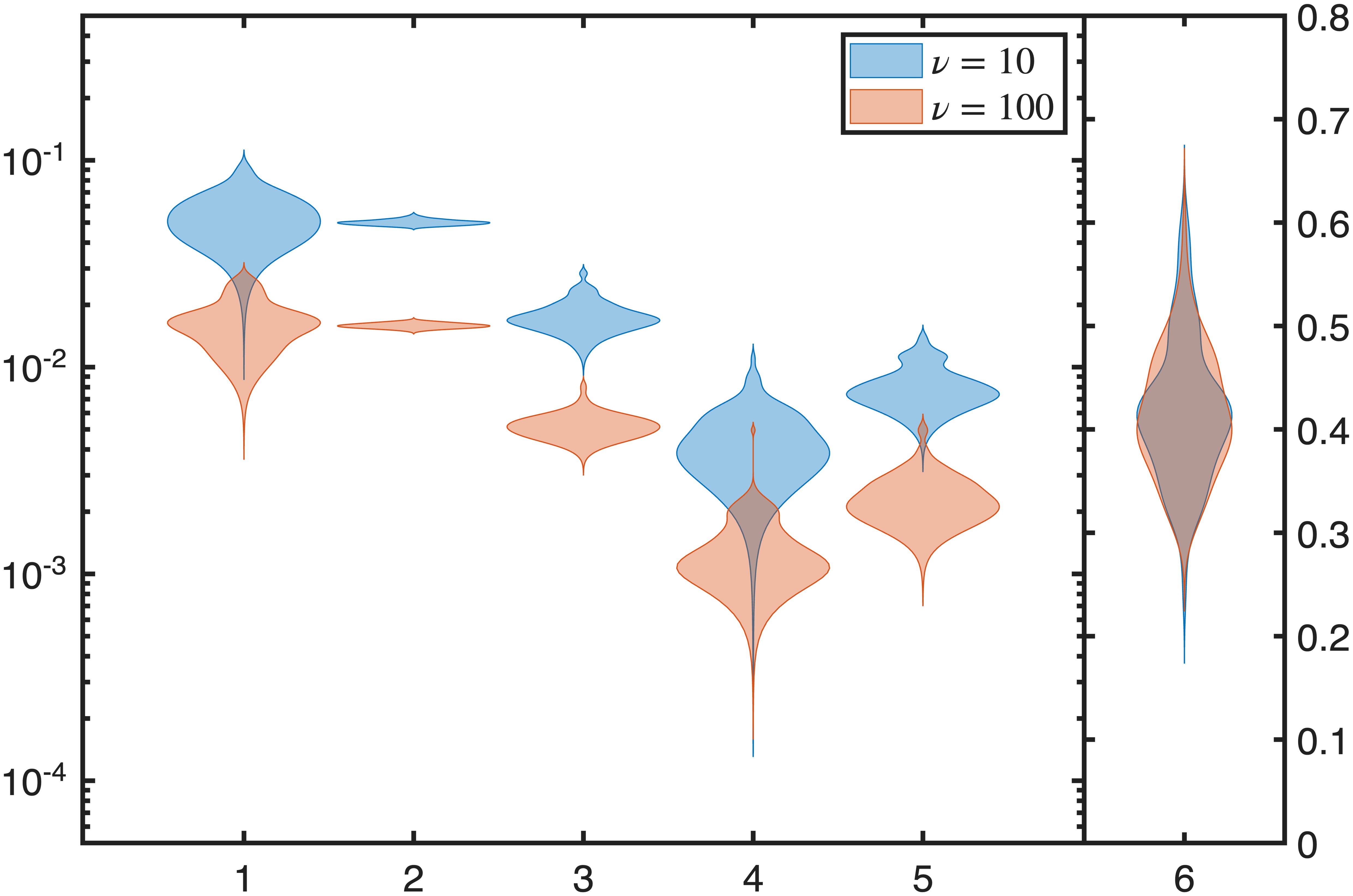}\includegraphics[width=0.49\linewidth]{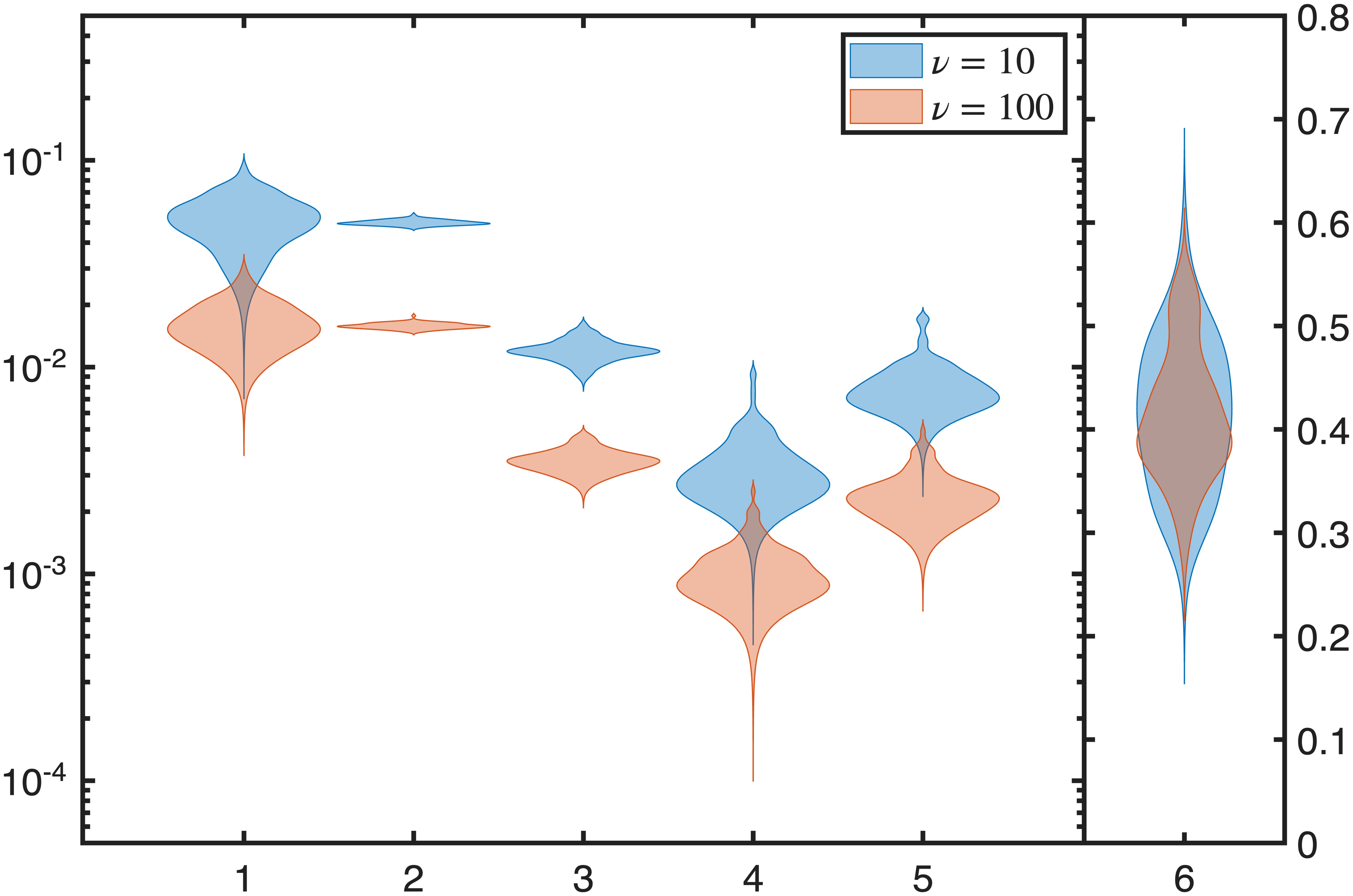}\\
\includegraphics[width=0.49\linewidth]{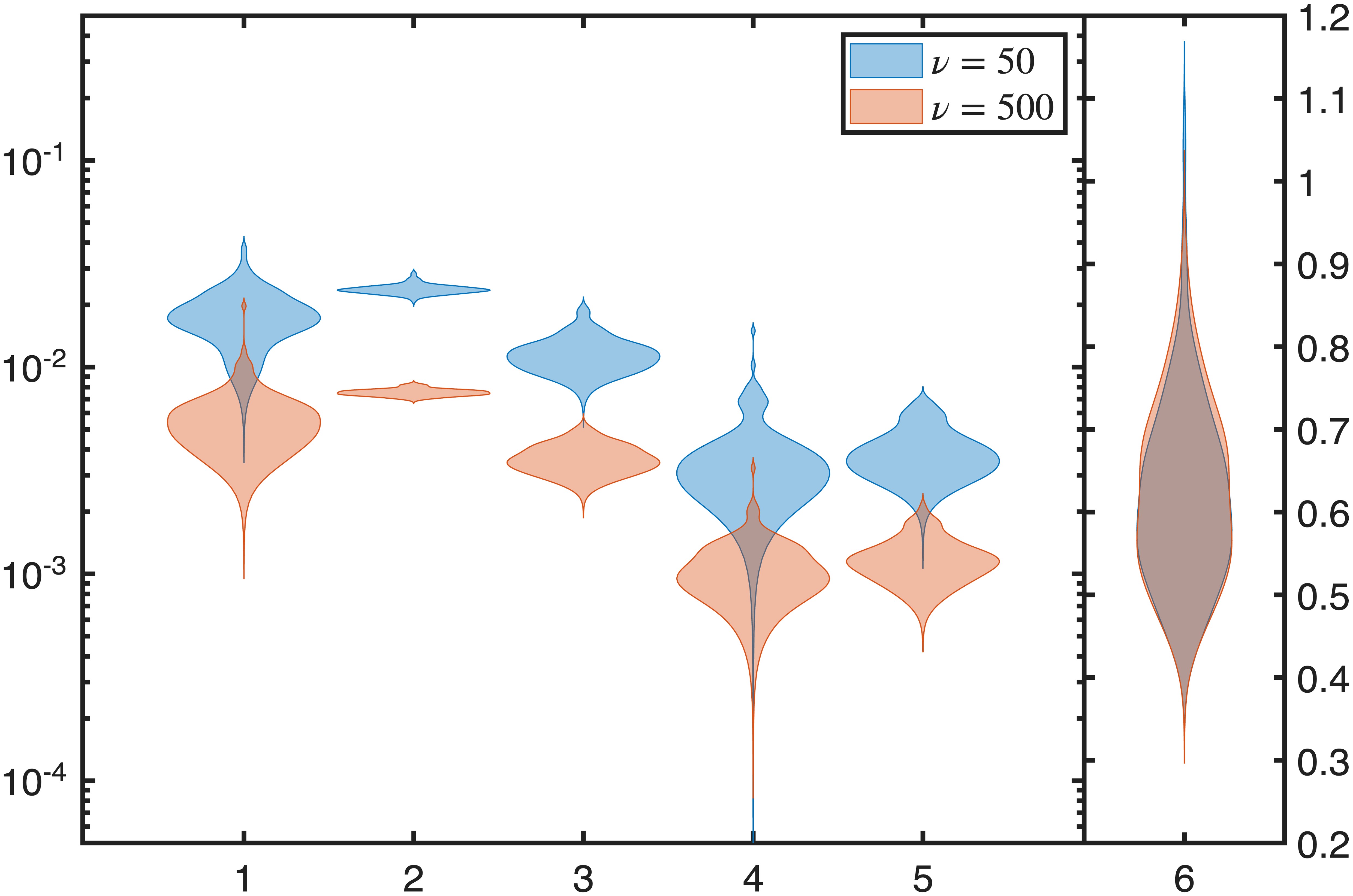} \includegraphics[width=0.49\linewidth]{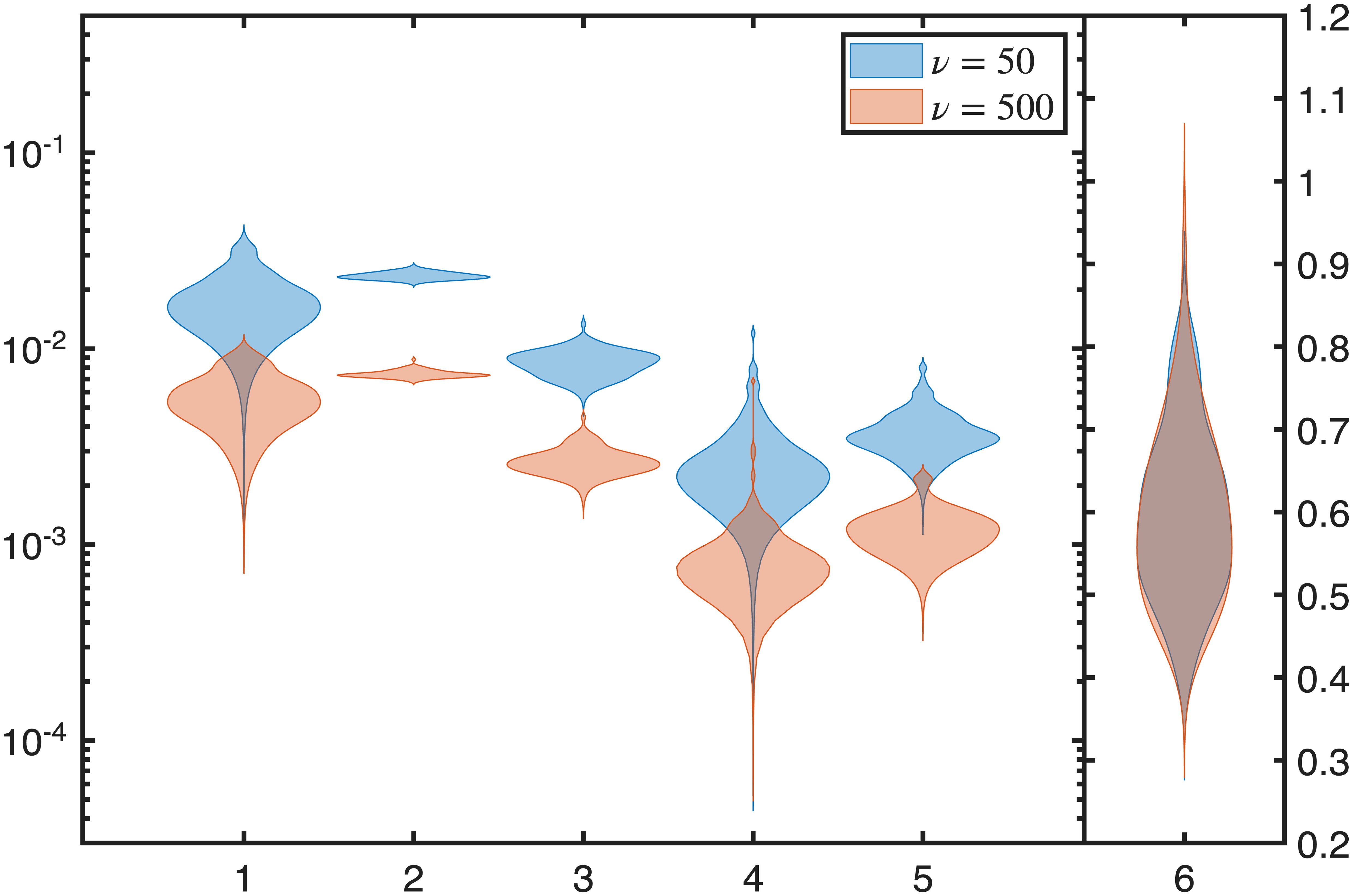}
\caption{Statistics for some key error quantities as obtained by running algorithm \ref{alg:main} on 100 independent isotropic parameter choices on the two-dimensional problem. Columns 1 to 5 are on a logarithmic scale and correspond to  $\|\mathbf{Y}^{-1} - \bar{\mathbf{Y}}^{-1}\| / \|\mathbf{Y}^{-1}\|$, $\|\mathbf{Y} - \bar{\mathbf{Y}}\|_\mathrm{F} / \|\mathbf{Y}\|_\mathrm{F}$, $\|\mathbf{Y} - \mathbf{Y}_B \|_\mathrm{F} / \|\mathbf{Y}\|_\mathrm{F}$, $\|\bar{\mathbf{w}}_\theta - \mathbf{w}\| / \|\mathbf{w}\|$, and $\|\bar{\mathbf{w}} - \mathbf{w}\| / \|\mathbf{w}\|$. The data in column 6 with linear scale refer to $c_V(\mathbf{P})$. The top panels show the variation of these errors at moderately small coefficients of variation at small $(\nu=10)$ and larger $(\nu=100)$ ensemble sizes, whilst those below refer to larger coefficients of variation, with $(\nu=50)$ and $(\nu=500)$. The panels on the left are with $\mu=1$ and those on the right with $\mu=16$. The relative error of our low-variance estimator $\bar{\mathbf{w}}_\theta$ is always smaller than that of the multi-sketch average estimator $\bar{\mathbf{w}}$, and this advantage is more pronounced in the small coefficient of variation regime.} 
\label{fig:2Dresults}
\end{figure}

\begin{figure}
\centering
\includegraphics[width=0.32\linewidth, trim=45 0 60 0]{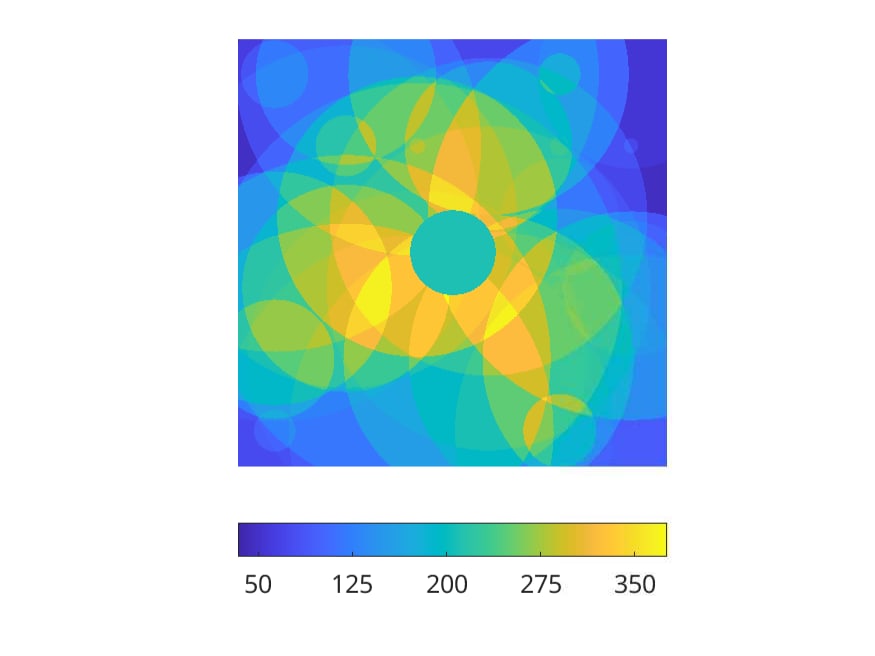}
\includegraphics[width=0.32\linewidth, trim=45 0 60 0]{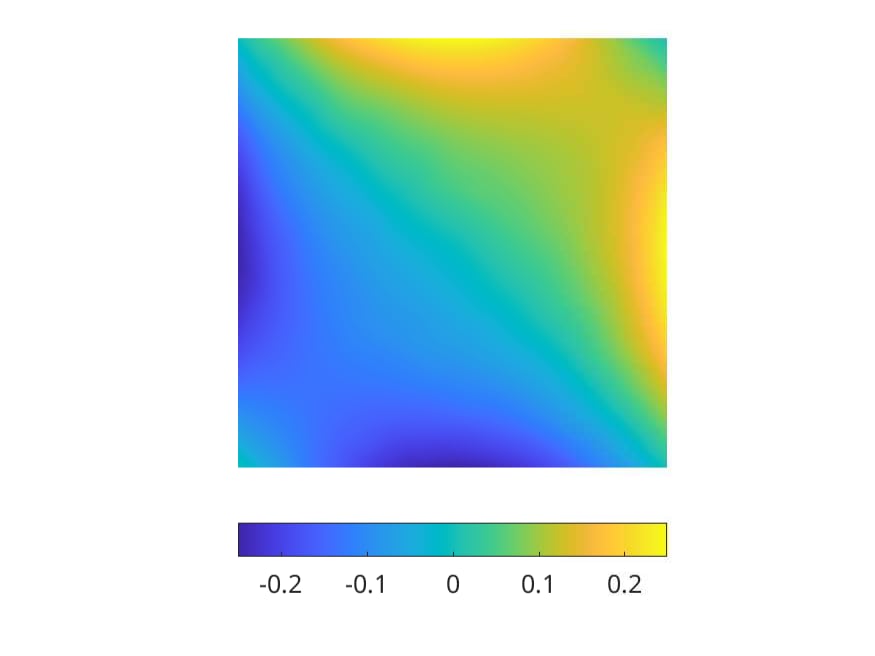}
\includegraphics[width=0.32\linewidth, trim=45 0 60 0]{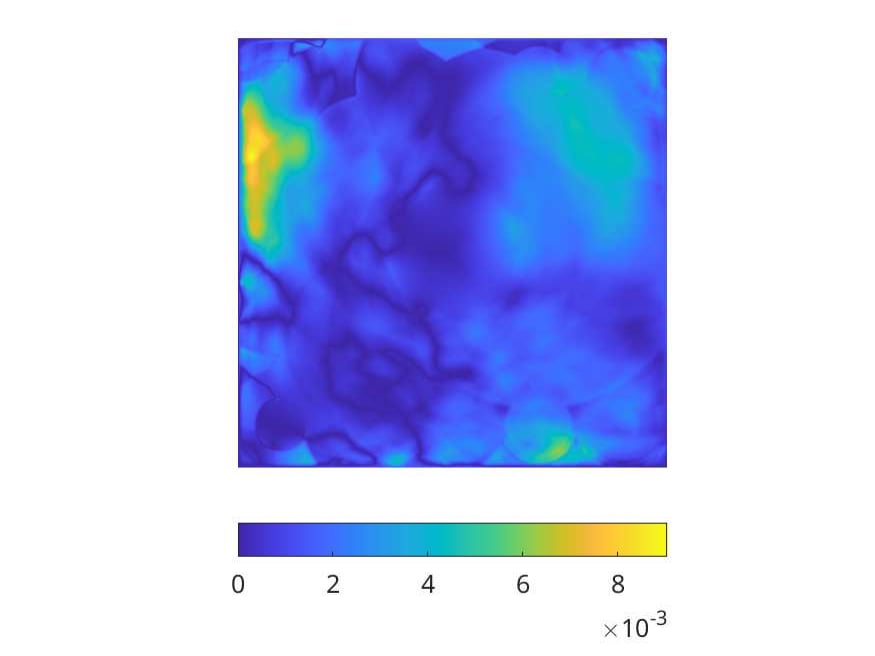}
\caption{Images of the spatial profile of an isotropic parameter field from the first set of tests with $c_V(\mathbf{P})=0.40$, $\mu=1$ and $\nu=100$ (left), its deterministic projected FEM solution $\mathbf{w}$ (middle) and the absolute pointwise error $|\mathbf{w} - \bar{\mathbf{w}}_\theta|$ from its sketched solution (right). Notice that the solution is spatially smooth and the absolute error in the sketched solution is predominantly very small apart from a few isolated areas.}
\label{fig:imageslowCov}
\end{figure}

\subsection{Three-dimensional example}

For the tests in three dimensional FEM we consider a spherical domain $\Omega$ of unit radius centred at the origin, with a fixed forcing term and boundary conditions expressed in spherical coordinates as  
$$
f(\varrho,\vartheta, \varphi) = \begin{cases}
 1 & \text{if} \quad \varrho \leq 0.15 \cos \bigl ( 3 (\vartheta + \frac \pi 3) \bigr ) \, \cos \bigl ( 2(\varphi + \frac \pi 2) \bigr ) \\
 0 & \text{otherwise}
\end{cases}, \quad u^\partial(1,\vartheta,\varphi) = 0. 
$$
The domain is discretised into a mesh of $n_e=33959932$ linear tetrahedral elements connecting $n_n =5768160$ nodes of which $m=14806$ are on the boundary, resulting into a FEM model with $n=5753354$ degrees of freedom and a large dimension $N=3 n_e \approx 102$ million. Aside the higher physical and numerical dimension of this setting we also seek to explore the performance of our approach in dealing with anisotropic parameter fields. To generate the respective parameter matrices we generate a random number between 30 and 90 of possibly overlapping spherical inclusions at arbitrary locations and radii in the closure of the unit sphere. These are assigned piecewise constant anisotropic properties, yielding diagonal tensor profiles with fractional anisotropy between 0.13 and 0.88, and CoV that average at about 0.7, so somewhat higher than those used in the isotropic 2D tests.   

Similarly to the 2D case, we fix the per-sketch budget to $c = \lceil 5 s \log s \rceil$ and choose $\nu \in \{10,100\}$. For consistency with the 2D tests, we also keep $\mu \in \{1,16\}$, where the 16 sub-regions of the sphere are equal volume wedges formed by dividing the upper and lower hemisphere into 8 `identical' wedges. The results presented in figure \ref{fig:3Dresults} show a similar pattern to those of the two-dimensional tests. Interestingly however, the reduction in the Frobenius variance of the corrected sketch (column 3) from the uncorrected (column 2) is not as large, especially in the $\mu=1$ case on the left. Recall however that these tests were performed under a larger CoV regime and thus they are more comparable with the bottom panels of figure \ref{fig:2Dresults}, even though we have used smaller sketch ensemble sizes. At $\nu=10$ (blue violins) the relative errors for our improved estimator $\bar{\mathbf{w}}_\theta$ after 100 independent runs average to 0.0121 at $\mu=1$ and drop to 0.0096 for $\mu=16$, while the corresponding figure for $\bar{\mathbf{w}}$ is 0.0161. It is important to recall that the results on $\bar{\mathbf{w}}$ are with double the quoted $\nu$, i.e. $\bar{\mathbf{Y}}(\mathbf{P})$ is a $2\nu$ sketch average. In increasing the ensemble to $\nu=100$ (orange violins), the relative errors of our estimator average at 0.0036 at $\mu=1$ and down to 0.0022 when $\mu=16$, which compared to the average relative error of 0.0052 for $\bar{\mathbf{w}}$ indicate that the half-error advantage of our method is maintained for the anisotropic case as well.    

\begin{figure}
\centering
\includegraphics[width=0.49\linewidth]{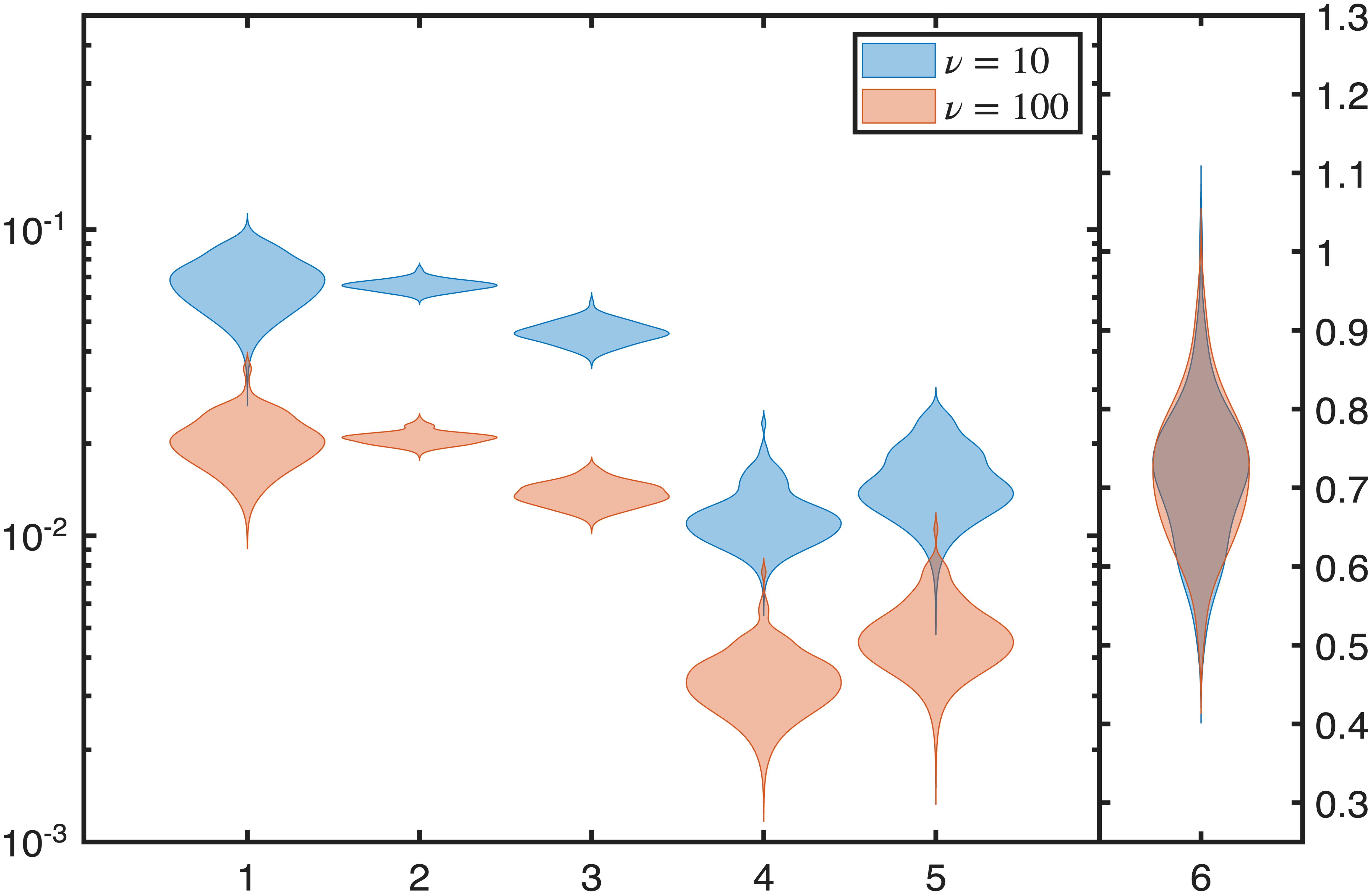}\includegraphics[width=0.49\linewidth]{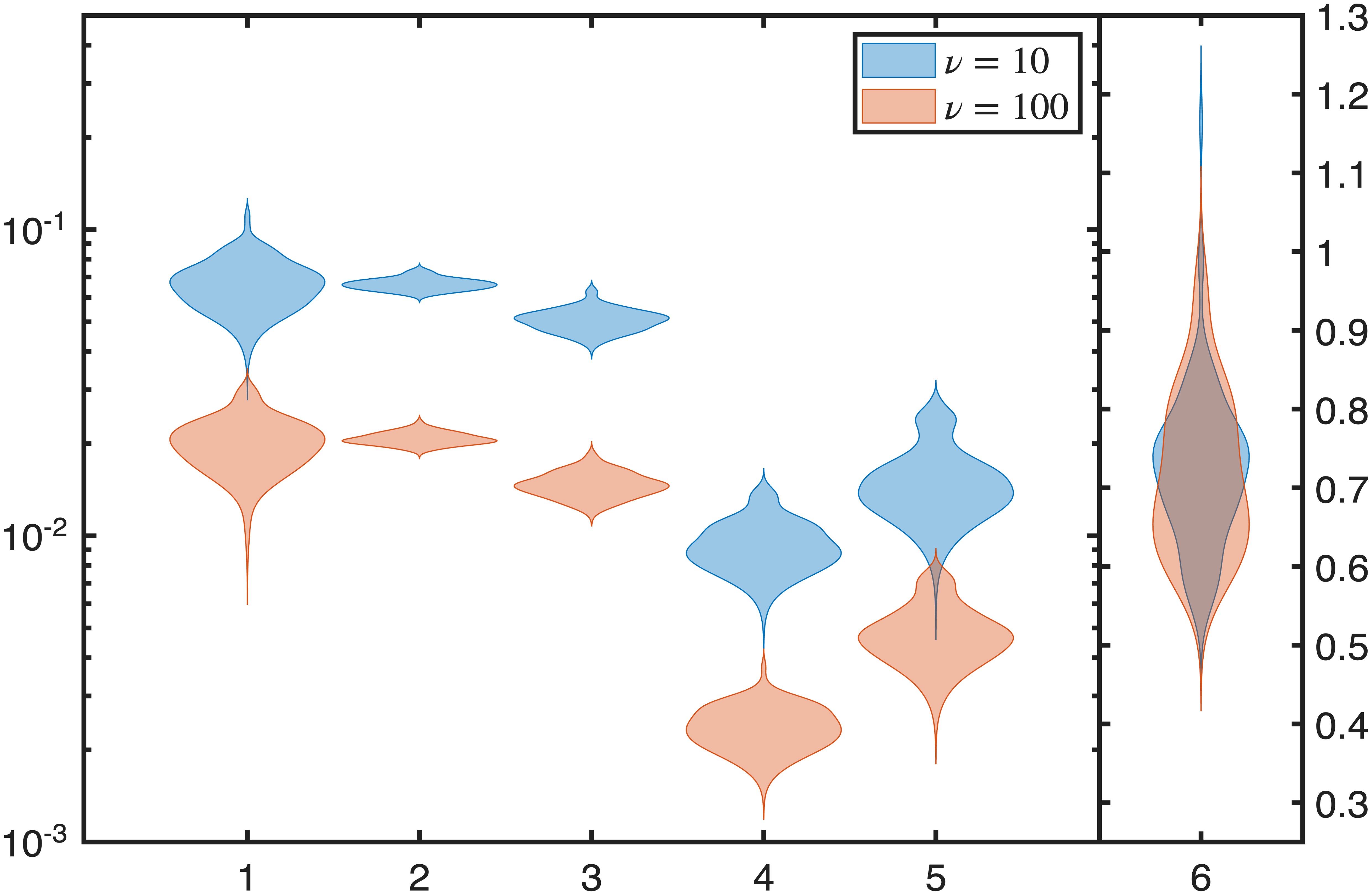}
\caption{Statistics for some key error quantities as obtained by running algorithm \ref{alg:main} on 100 independent anisotropic parameter choices on the three-dimensional problem. In each panel the columns 1 to 5 on a logarithmic scale are $\|\mathbf{Y}^{-1} - \bar{\mathbf{Y}}^{-1}\| / \|\mathbf{Y}^{-1}\|$, $\|\mathbf{Y} - \bar{\mathbf{Y}}\|_\mathrm{F} / \|\mathbf{Y}\|_\mathrm{F}$, $\|\mathbf{Y} - \mathbf{Y}_B \|_\mathrm{F} / \|\mathbf{Y}\|_\mathrm{F}$, $\|\bar{\mathbf{w}}_\theta - \mathbf{w}\| / \|\mathbf{w}\|$, and $\|\bar{\mathbf{w}} - \mathbf{w}\| / \|\mathbf{w}\|$. The data on column 6 having a linear scale refer to the coefficient of variation $c_V(\mathbf{P})$. The results follow in the trend of those of the two-dimensional problem in fig. \ref{fig:2Dresults}, in that in both cases the low-variance estimator (column 4) is on average more efficient than the multi-sketch average (column 5). 
}
\label{fig:3Dresults}
\end{figure}

\subsubsection{Timings and memory footprint} 
The computations as described in the main algorithm \ref{alg:main} and the sampling algorithm \ref{alg:fast_thinning} are performed on a Intel Xeon Gold 5120 CPU with 14 Cores, 2.2GHz Base, 19.25 MB Cache, and 380 GB RAM running Matlab 2020a.
We provide details on the most costly setup we have used, namely that of the 3D anisotropic parameter tests that involves a larger mesh, while using $\mu=16$ sub-regions for the control variates and $\nu=100$ sketch ensemble sizes with $c \approx 5300$. We report average wall-clock timings as obtained from Matlab's profiler utility, averaging over 100 sequential runs of the main algorithm. The domain clustering to sub-regions for the construction of the control variates, and the computation of the tall matrices $\mathbf{U} \in \mathds{R}^{N \times s}$ and $\mathbf{U}_\partial \in \mathds{R}^{N \times m}$ took place offline along with computing the probabilities $\boldsymbol{\eta}$ and $\tilde{\boldsymbol{\eta}}$ both in $\mathds{R}^N$ with $s=200$, $m=14806$ and $N=101879796$. Stored in whole these tall matrices take up 163 GB and 12 TB respectively, so this is best avoided as they needn't be read into the RAM in whole. Instead one can deploy segmentation and out-of-core row selection implemented using memory mapping to retrieve only the sampled rows of $\mathbf{U}$ and $\mathbf{U}_\partial$. Each of the probability vectors takes 815 MB to store. The costs for generating the parameter diagonals are not accounted for as these are assumed to be provided in our setting. The critical time to note is that for computing the projected coefficients matrices  $\mathbf{Y}(\mathbf{P})$ and $\mathbf{Z}(\mathbf{P})$ deterministically. The time to compute these are 135.4 s and 62.8 s respectively, using the built in \verb"bxfun(@times)" command, which takes about half the time for these to be computed as triple products. The time to form the control variate parameters and matrices as in the lines $4-5$ of algorithm \ref{alg:main} with $\mu=16$ and based on $500$ samples in the Monte Carlo scheme to estimate the optimal coefficients for $\mathbf{T}$ and $\mathbf{R}$ is 6.4 s. To run the geometric skipping/Bernoulli thinning algorithm \ref{alg:fast_thinning} \emph{twice} as in line 6, each time sampling the sketch and its control variates takes 30.7 s, the most costly operation of our approach. Computing the ensemble means and covariances, control variate weights, the regularisation parameter $\theta$ and the corrected sketch as per lines $7-10$ costs only 0.2 s as all these computations are bounded in dimension by $\max\{s,\nu\}$. Finally evaluating the solution of the optimisation and the answer as in lines $11-12$ takes only a few milliseconds. In conclusion for the choice of $(s,\nu,\mu)$ our algorithm takes less than half the time of computing $\mathbf{Y}(\mathbf{P})$  deterministically, using less than 1\% of the $N$ indices across the whole sampling process.         

For further context, note that to use an efficient Krylov iteration method like the preconditioned conjugate gradients algorithm \cite{GolubVanLoan2013} on the full-scale problem \eqref{fempde}, incurs the following costs: Around 135.4 s to assemble $\mathbf{A}(\mathbf{P})$ having about 85.5 million non-zero entries occupying 1.4 GB of memory and 62.9 s for $\mathbf{A}_\partial (\mathbf{P})$ with 540 thousand non-zeros taking 9.7 MB. Once the  system is assembled then it takes about 31.5 s to compute an incomplete Cholesky factor at a tolerance of $10^{-3}$ and then solving the system iteratively to a relative error of $10^{-3}$ in order to match the sketching precision, takes about 100 iterations and 81.8 s. Overall to solve the original FEM problem deterministically at a lower accuracy takes more than 5 minutes per parameter. By comparison computing the projection deterministically takes only 67 s for equivalent levels of projection approximation error but this requires prohibitive amounts of memory or communication. Our method achieves similar errors at about half the time with substantial storage and communication compared to the deterministic projection.    

\section{Conclusions}

The proposed variance-reduced RNLA framework for parametric FEM systems provides a clear improvement over the classical sketch-and-invert pipeline. For a fixed overall sampling budget, the combined use of control variates and forward-inverse sketch fusion yields solution errors that are, on average, approximately half those of the standard approach, while incurring only a marginal increase in computational complexity. This improvement is most pronounced when the parameter fields, whether isotropic or anisotropic, exhibit only moderate variation. As the fields become increasingly discontinuous, the benefit of the variance-reduction mechanism progressively declines and may eventually vanish, unless constraints on the parameters can be exploited in the construction of the control variates. The main computational bottleneck of our approach is the sampling stage itself, still the most time consuming component even after introducing geometric skipping, and therefore warrants further attention, potentially in combination with hardware acceleration. Numerical experiments demonstrate that, for parameters with a coefficient of variation up to 100\%, the proposed scheme achieves a two-fold reduction in error compared to standard randomised sketching, while requiring roughly half the computational time of deterministic multiplication. Overall, the results are sufficiently encouraging to support further development of this approach for real-time and resource-constrained scientific computing.

\appendix


\section{Proof of theorem \ref{concentave}}\label{Proofthm5.2}

We first prove the invertibility of $\bar{\mathbf{Y}}(\mathbf{P})$ conditioned on $\bar{\mathbf{Y}}(\mathbf{I}) \succ 0$ \cite{LUNG2020112933}. If $\bar{\mathbf{Y}}(\mathbf{I}) \succ 0$ then for any nonzero $\mathbf{x} \in \mathds{R}^s$, 
$\mathbf{x}^\top \bar{\mathbf{Y}}(\mathbf{I}) \mathbf{x} = \sum_{i=1}^N \bar s_{ii}^2 \bigl ( {\mathbf{u}_y}_{(i)}\mathbf{x} \bigr )^2 > 0$, where $\bar s_{ii}$ is the element of $\bar{\mathbf{S}} = \frac 1 \nu \sum_{t=1}^{\nu} \mathbf{S}_t$.
By the commutative property of the diagonal matrices $\mathbf{P}$ and $\bar{\mathbf{S}}$ we have
\begin{equation}
\mathbf{x}^\top \bar{\mathbf{Y}}(\mathbf{P}) \mathbf{x} = \sum_{i=1}^N p_{ii} \, \bar{s}_{ii}^2 \bigl ( {\mathbf{u}_y}_{(i)}\mathbf{x} \bigr )^2 > 0 \quad \Leftrightarrow \quad \bar{\mathbf{Y}}(\mathbf{P}) \succ 0, \end{equation}
since $p_{ii} >0$ and $s_{ii} \geq 0$. In effect, scaling the rows of $\mathbf{U}_y$ by $\mathbf{P}^{\frac 1 2} \succ 0$ preserves invertibility in $\bar{\mathbf{Y}}(\mathbf{P})$ provided that $\bar{\mathbf{Y}}(\mathbf{I}) \succ 0$.

Using lemma \ref{normAmI} for $0 < \epsilon <1$, we can write the error as a sum of symmetric matrices
\begin{equation}\label{thm1ap1}
\mathbf{U}_y^\top \bar{\mathbf{S}}^\top \bar{\mathbf{S}} \mathbf{U}_y - \mathbf{I} = \sum_{t=1}^{\nu} \mathbf{E}_t, \quad \mathbf{E}_t = \sum_{i=1}^{N} \mathsf{E}_{t,i}\, , \quad \mathsf{E}_{t,i} = \frac 1 \nu \Bigl ( \frac{\gamma_i^{(t)}}{\eta_i} - 1 \Bigr ) {\mathbf{u}_y}_{(i)}^\top {\mathbf{u}_y}_{(i)}, 
\end{equation}
where $\{\gamma_i^{(1)}, \ldots, \gamma_i^{(\nu)}\}_{i=1}^N$ are independent Bernoulli variables with $\mathbb{P}(\gamma_i^{(t)}=1)=\eta_i=\min\{1,c\xi_i(\mathbf{U})\}$, and ${\mathbf{u}_y}_{(i)}^\top {\mathbf{u}_y}_{(i)}$ are rank 1 $s \times s$ symmetric matrices. To bound the summands in spectral norm we have  
\begin{align}\label{thm1ap1b}
\|\mathsf{E}_{t,i}\| = \frac 1 \nu \Bigl | \frac{\gamma_i^{(t)}}{\eta_i} - 1 \Bigr | \bigl \|{\mathbf{u}_y}_{(i)}^\top {\mathbf{u}_y}_{(i)} \bigr \| = \frac 1 \nu \Bigl |\frac{\gamma_i^{(t)}}{\eta_i} - 1 \Bigr | \|{\mathbf{u}_y}_{(i)}\|^2  = \frac 1 \nu \Bigl | \frac{\gamma_i^{(t)}}{\eta_i} - 1 \Bigr | \ell_i(\mathbf{U}_y),
\end{align}
with the upper bound attained for $\eta_i < \frac 1 2$ and $\gamma_i^{(t)} =1$ for all $t \in \{\nu\}$. Introducing $\eta_i=c \xi_i(\mathbf{U}) \geq c \beta \xi_i(\mathbf{U}_y)$ and $\xi_i(\mathbf{U}_y) = s^{-1} \ell_i(\mathbf{U}_y)$ into \eqref{thm1ap1b} yields
$$
\|\mathsf{E}_{t,i}\| \leq \frac 1 \nu \Bigl ( \frac{1-\eta_i}{\eta_i} \Bigr ) \ell_i(\mathbf{U}_y) = \frac 1 \nu \Bigl ( \frac{1 - c \beta  s^{-1} \ell_i(\mathbf{U}_y)}{c \beta s^{-1} \ell_i(\mathbf{U}_y)} \Bigr ) \ell_i(\mathbf{U}_y) = \frac 1 \nu \frac{s - c \beta \ell_i(\mathbf{U}_y)}{c \beta} \leq \frac{s}{\nu c \beta}
$$
For the variance statistic consider first that
\begin{align*}
\mathbb{E}\bigl [\mathsf{E}_{t,i}^2 \bigr ] & = \frac{1}{\nu^2} \mathbb{E} \Bigl [ \Bigl (\frac{\gamma^{(t)}_i}{\eta_i} - 1 \Bigr)^2 \Bigr ] {\mathbf{u}_y}_{(i)}^\top {\mathbf{u}_y}_{(i)} {\mathbf{u}_y}_{(i)}^\top {\mathbf{u}_y}_{(i)}
= \frac{1}{\nu^2} \mathbb{E} \Bigl ( \frac{\gamma_i^2 - 2 \gamma_i \eta_i + \eta_i^2}{\eta_i^2}  \Bigr ) \ell_i(\mathbf{U}_y) {\mathbf{u}_y}_{(i)}^\top {\mathbf{u}_y}_{(i)}\\
& = \frac{1-\eta_i}{\nu^2 \eta_i} \ell_i(\mathbf{U}_y) {\mathbf{u}_y}_{(i)}^\top {\mathbf{u}_y}_{(i)}  = \frac{s - c \beta \ell_i(\mathbf{U}_y)}{\nu^2 c \beta} {\mathbf{u}_y}_{(i)}^\top {\mathbf{u}_y}_{(i)} \leq \frac{s}{\nu^2 c \beta} {\mathbf{u}_y}_{(i)}^\top {\mathbf{u}_y}_{(i)},
\end{align*}
and thus by the independence of the $\nu$ sketches the variance statistic is
$$
\bigl \|\mathbf{V}_{\mathbf{U}_y^\top \bar{\mathbf{S}}^\top \bar{\mathbf{S}} \mathbf{U}_y} \bigr \| = \Bigl \| \sum_{t=1}^{\nu} \mathbb{E}[\mathbf{E}_t^2] \Bigr \| = \nu \Bigl \| \mathbb{E}[\mathbf{E}_t^2] \Bigr \| = \nu \Bigl \| \mathbb{E}[\mathbf{E}_{t,i}^2] \Bigr \| \leq \nu \Bigl \| \frac{s}{\nu^2 c \beta} {\mathbf{u}_y}_{(i)}^\top {\mathbf{u}_y}_{(i)} \Bigr \| \leq \frac{s}{\nu c \beta}.
$$
Assembling into Bernstein's inequality for matrices we have \cite{Tropp_MatrixIneq}
\begin{align*}
\mathbb{P}\bigl (\|\mathbf{U}_y^\top \bar{\mathbf{S}}^\top \bar{\mathbf{S}} \mathbf{U}_y - \mathbf{I} \| \geq \epsilon \bigr ) \leq 2 s \exp \Bigl ( - \frac{\frac{\epsilon^2}{2}}{\frac{s}{\nu \beta c} + \frac{s \epsilon}{3 \nu \beta c}}\Bigr ) = 2 s \exp \Bigl ( - \frac{3 \nu \beta c \epsilon^2}{6 s + 2 s \epsilon} \Bigr ).
\end{align*}

\section{Proof of proposition \ref{smallc}}\label{Proofprop521}

Rank sub-additivity for the $\nu$ sketches implies
$$
\mathrm{rank} \bigl ( \bar{\mathbf{Y}}(\mathbf{P}) \bigr ) = \mathrm{rank} \bigl (\sum_{t=1}^{\nu}\hat{\mathbf{Y}}_t(\mathbf{P})\bigr )
\;\leq \;
\sum_{t=1}^{\nu} \mathrm{rank} \bigl (\hat{\mathbf{Y}}_t(\mathbf{P}) \bigr )
\;\leq \; \nu r,
$$
and thus $\mathrm{rank}(\hat{\mathbf{Y}}(\mathbf{P}))$ equals the rank of the sum of the $\nu$ sketches, which is also bounded by $s$. Letting $w_i \doteq \frac 1 \nu \sum_{t=1}^{\nu} \frac{\gamma_i^{(t)}}{\eta_i}$ allows to write the average sketch as a sum of rank 1 PSD matrices $\bar{\mathbf{Y}}(\mathbf{P})=\sum_{i \, \cdot \, w_i >0} p_{ii} \mathbf{u}_{(i)}^\top \mathbf{u}_{(i)}$, and then using the fact that the null space of the sum is the intersection of the null spaces of the summands we get
$$
\mathcal{N} ( \bar{\mathbf{Y}}(\mathbf{P})) = \bigcap_{i\, \cdot \, w_i > 0} \mathcal{N} ( \mathbf{u}_{(i)}^\top \mathbf{u}_{(i)}) = \bigcap_{i \, \cdot \, w_i > 0} \bigl \{ \mathbf{x} \, \cdot \, \mathbf{u}_{(i)}^\top \mathbf{x} = 0 \bigr \}.
$$
If for some spanning set $J_\star$ each index $i\in J_\star$ is selected at least once across the $\nu$ sketches, then $w_i>0$ for all $i\in J_\star$ and hence the rows $\{\mathbf{u}_i : i\in J_\star\}$ (which span $\mathds{R}^s$ by assumption) are present, then $\bar{\mathbf{Y}}(\mathbf{P})\succ 0$. Moreover, independence across sketches and rows implies that index $i \in \{N\}$ is not included in a given sketch with probability $(1-\eta_i)$, and thus the probability of it being not included across all $\nu$ sketches is $(1-\eta_i)^{\nu}$. Therefore,
$$
\mathbb{P}\big(\text{every } i\in J_\star \text{ appears at least once}\big)
\;=\;
\prod_{i\in J_\star}\bigl [\,1-(1-\eta_i)^{\nu}\,\bigr ].
$$

\section{Proof of remark \ref{varcovar}}\label{Proofrem53}

For any index $1 \leq h \leq s$, the diagonal element of $\bar{\mathbf{Y}}(\mathbf{P})$ satisfies 
$$
\bar{y}_{hh}(\mathbf{P})=\frac 1 \nu \sum_{t=1}^{\nu} \sum_{i=1}^N \frac{\gamma_i}{\eta_i} p_{ii} u_{ih}^2, \quad \text{and} \quad \mathbb{E}[\bar{y}_{hh}(\mathbf{P})]=y_{hh}(\mathbf{P}).
$$
The covariance of any two diagonal elements is
$$
\mathbb{C}(\bar y_{hh}(\mathbf{P}),\bar y_{kk}(\mathbf{P})) = \mathbb{E}[\bar y_{hh}(\mathbf{P})\bar y_{kk}(\mathbf{P})] - \mathbb{E}[\bar y_{hh}(\mathbf{P})] \mathbb{E}[\bar y_{kk}(\mathbf{P})] = \mathbb{E}[\bar y_{hh}(\mathbf{P})\bar y_{kk}(\mathbf{P})] - y_{hh}(\mathbf{P}) y_{kk}(\mathbf{P}).   
$$
By the independence of the $\nu$ sketches and the variables $\boldsymbol{\gamma}$ we have $\mathbb{V}(\bar{y}_{hk}(\mathbf{P}))=\frac 1 \nu \mathbb{V}(\hat{y}_{hk}(\mathbf{P}))$, and $\mathbb{C}(\bar{y}_{hh}(\mathbf{P}),\bar{y}_{kk}(\mathbf{P}))=\frac 1 \nu \mathbb{C}(\hat{y}_{hk}(\mathbf{P}), \hat{y}_{kk}(\mathbf{P}))$ so it suffices to prove the remark's claim for the individual sketches. Effectively, for a fixed $t$
\begin{align*}
\mathbb{E}[\hat{y}_{hh}(\mathbf{P})\hat{y}_{kk}(\mathbf{P})] & = \mathbb{E} \biggl [ \sum_{i=1}^N \frac{\gamma_i}{\eta_i} p_{ii} u_{ih}^2 \Bigl ( \frac{\gamma_i}{\eta_i} p_{ii} u_{ik}^2 + \sum_{j\neq i=1}^N \frac{\gamma_j}{\eta_j} p_{jj} u_{jk}^2 \Bigr ) \biggr ]\\
& = \mathbb{E} \biggl [ \sum_{i=1}^N \frac{\gamma_i^2}{\eta_i^2} p_{ii}^2 u_{ih}^2 u_{ik}^2 + \sum_{i=1}^N \sum_{j\neq i=1}^N \frac{\gamma_i \gamma_j}{ \eta_i \eta_j} p_{ii} p_{jj} u_{ih}^2 u_{jk}^2 \biggr ]\\
& = \frac 1 \nu \Bigl ( \sum_{i=1}^N \frac{1}{\eta_i} p_{ii}^2 u_{ih}^2 u_{ik}^2 + \sum_{i=1}^N p_{ii} u_{ih}^2 \sum_{j\neq i=1}^N  p_{jj} u_{jk}^2 \Bigr )\\
& = \sum_{i=1}^N \frac{1}{\eta_i} p_{ii}^2 u_{ih}^2 u_{ik}^2 + \sum_{i=1}^N p_{ii} u_{ih}^2 \bigl ( y_{kk} - p_{ii}u_{ik}^2 \bigr ) 
 = \sum_{i=1}^N \Bigl (\frac{1}{\eta_i} - 1 \Bigr ) p_{ii}^2 u_{ih}^2 u_{ik}^2 + y_{hh}y_{kk},
\end{align*}
hence $\mathbb{C}(\hat{y}_{hh},\hat{ y}_{kk}) = \sum_{i=1}^N \Bigl (\frac{1}{\eta_i} - 1 \Bigr ) p_{ii}^2 u_{ih}^2 u_{ik}^2 = \mathbb{V}(\hat{y}_{hk})$. The equation on the right can be shown easily by considering the independence of the Bernoulli $\mathbb{V}(\hat{y}_{hk})=\sum_{i=1}^N \mathbb{V} \bigl (\frac{\gamma_i}{\eta_i}p_{ii} u_{ih} u_{ik} \bigr )$.

\section{Proof of lemma \ref{VarYbarP}}\label{Prooflem54}

Splitting the total sketch variance into that of the diagonal $(h=k)$ and off-diagonal $(h \neq k)$ elements gives
$$
\mathbb{V} \bigl (\bar{\mathbf{Y}}(\mathbf{P}) \bigr ) = \frac 1 \nu \sum_{h=1}^s \mathbb{V}(\hat y_{hh}(\mathbf{P})) + \frac 1 \nu \sum_{h\neq k}^s \mathbb{V}(\hat y_{hk}(\mathbf{P})),
$$
and by symmetry we have $\hat{y}_{hk}(\mathbf{P}) = \hat{y}_{kh}(\mathbf{P}) \quad \Longleftrightarrow \quad \mathbb{V} \bigl ( \hat{y}_{hk}(\mathbf{P}) \bigr ) = \mathbb{V} \bigl ( \hat{y}_{kh}(\mathbf{P}) \bigr )$. Imparting \eqref{lemma2} from remark \ref{varcovar}, the off-diagonal sum can be developed to
$$
\mathbb{V} \bigl (\bar{\mathbf{Y}}(\mathbf{P}) \bigr ) = \frac 1 \nu \sum_{h=1}^s \mathbb{V}(\hat y_{hh}(\mathbf{P})) + \frac 1 \nu \sum_{h=1}^s \sum_{k \neq h}^s \mathbb{C} \bigl (\hat y_{hh}(\mathbf{P}), \hat y_{kk}(\mathbf{P}) \bigr )
$$
which is by definition the variance of the sum of the diagonal entries of $\bar{\mathbf{Y}}(\mathbf{P})$, hence 
\begin{align}\label{VarTrace}
\mathbb{V} \bigl (\bar{\mathbf{Y}}(\mathbf{P}) \bigr ) = \mathbb{V} \Bigl ( \sum_{h=1}^s \bar{y}_{hh}(\mathbf{P}) \Bigr ) = \mathbb{V} \Bigl ( \mathrm{Tr} \bigl ( \bar{\mathbf{Y}}(\mathbf{P}) \bigr ) \Bigr ).
\end{align}
In effect, the total variance becomes 
\begin{align*} 
\mathbb{V} \bigl (\bar{\mathbf{Y}}(\mathbf{P}) \bigr ) & = \frac 1 \nu \mathbb{V} \Bigl ( \sum_{h=1}^s \sum_{i=1}^N \frac{\gamma_i}{\eta_i}p_{ii} u_{ih}^2 \Bigr ) = \frac 1 \nu \mathbb{V} \Bigl ( \sum_{i=1}^N \frac{\gamma_i}{\eta_i}p_{ii} \ell_i(\mathbf{U}) \Bigr ) = \frac 1 \nu  \sum_{i=1}^N \Bigl ( \frac{1}{\eta_i} - 1 \Bigr ) p_{ii}^2 \ell_i(\mathbf{U})^2\\
& \leq \frac 1 \nu \sum_{i=1}^N p_{ii}^2 \ell_i(\mathbf{U}) \Bigl ( \frac s c - \ell_i(\mathbf{U}) \Bigr ),
\end{align*}
and the concentration bound in Frobenius norm \eqref{Frob_conc} follows directly from Markov's inequality.

\section{Proof of theorem \ref{theoremYPerr}}\label{Proofthm55}

From a single sketch $\hat{\mathbf{Y}}(\mathbf{P}) = \mathbf{U}^\top \mathbf{P}^{\frac 1 2} \mathbf{S}^\top \mathbf{S} \mathbf{P}^{\frac 1 2} \mathbf{U}$ of $\mathbf{Y}(\mathbf{P})=\mathbf{U}^\top \mathbf{P}\mathbf{U}$ we have
$$
\hat{\mathbf{Y}}(\mathbf{P}) - \mathbf{Y}(\mathbf{P}) = \sum_{i=1}^N \mathbf{E}_i, \quad \mathbf{E}_i = \Bigl ( \frac{\gamma_i}{\eta_i} - 1 \Bigr ) p_{ii} \mathbf{u}_{(i)}^\top \mathbf{u}_{(i)}, \quad \text{such that} \quad \mathbb{E}[\mathbf{E}_i]=0. 
$$
From the independence of $\{\gamma_i\}_{i=1}^N$ and the formula for the variance of a single sketch 
$$
\mathbf{V}_{\hat{\mathbf{Y}}}
 = \sum_{i=1}^N \Bigl ( \frac{1}{\eta_i} - 1 \Bigr ) p_{ii}^2 \ell_i(\mathbf{U}) \mathbf{u}_{(i)}^\top \mathbf{u}_{(i)},
$$
averaging over $\nu$ iid sketches, and taking the spectral norm gives the variance statistic
$$
\bigl \|\mathbf{V}_{\bar{\mathbf{Y}}} \bigr \|
 = \frac 1 \nu \Bigl  \| \sum_{i=1}^N \Bigl ( \frac{1}{\eta_i} - 1 \Bigr ) p_{ii}^2 \ell_i(\mathbf{U}) \mathbf{u}_{(i)}^\top \mathbf{u}_{(i)} \Bigr \|.
$$
To get the upper bound on the spectral norm of the error summands $\|\mathbf{E}_i\| \leq L$, note that the coefficients $\gamma_i \eta_i^{-1} - 1$ are maximised for $\gamma_i=1$ and $\eta_i < \frac 1 2$ where $\eta_i = c s^{-1} \ell_i(\mathbf{U})$. In effect, this yields
$$
\|\mathbf{E}_i\| = \Bigl \| \Bigl ( \frac{s}{c \ell_i(\mathbf{U})} - 1 \Bigr ) p_{ii} \mathbf{u}_{(i)}^\top \mathbf{u}_{(i)} \Bigr \| \leq \Bigl ( \frac{s}{c \ell_i(\mathbf{U})} - 1 \Bigr ) p_{ii} \ell_i(\mathbf{U}) \leq \frac{s}{c}\|\mathbf{P}\|_\infty. 
$$

\section{Proof of theorem \ref{thm:bernstein-spectral-ZP}}\label{Proofthm56}

To prove the tail concentration bound for the right hand side average sketch matrix consider 
$$
\bar{\mathbf{Z}}(\mathbf{P}) - \mathbf{Z}(\mathbf{P}) = \sum_{t=1}^\nu \mathbf{E}_t, \quad  \mathbf{E}_t = \sum_{i=1}^N \mathsf{E}_{t,i}, \quad \mathsf{E}_{t,i} = \Bigl ( \frac 1 \nu \frac{\tilde{\gamma}_i^{(t)}}{\tilde{\eta}_i} - 1 \Bigr ) p_{ii}\mathbf{u}_{(i)}^\top {\mathbf{u}_\partial}_{(i)},
$$
with the summands centred $\mathbb{E}[\mathsf{E}_{t,i}] = 0$. Let the symmetric dilation of the $s \times m$ summands as a $(s+m) \times (s+m)$ matrix
$$
\mathcal{E}_{t,i} \doteq \begin{bmatrix}
0 & \mathsf{E}_{t,i)}\\
\mathsf{E}_{t,i}^\top & 0
\end{bmatrix}
$$
such that $\|\sum_{t=1}^\nu \sum_{i=1}^N \mathcal{E}_{t,i}\| = \|\bar{\mathbf{Z}}(\mathbf{P}) - \mathbf{Z}(\mathbf{P})\|$. The variance of $\bar{\mathbf{Z}}(\mathbf{P})$ can also be cast as
$$
\mathbf{V}_{\bar{\mathbf{Z}}} = \sum_{i=1}^N  \mathbb{E}[\mathcal{E}_{t,i}^2] = \frac{1}{\nu} \begin{bmatrix}
\sum_{i=1}^N \mathbb{E}\bigl [\mathsf{E}_{t,i}\mathsf{E}_{t,i}^\top \bigr ] & 0\\
0 & \sum_{i=1}^N \mathbb{E} \bigl [ \mathsf{E}_{t,i}^\top \mathbf{E}_{t,i} \bigr ]
\end{bmatrix},
$$
and by the independence of the sketches 
$$
\|\mathbf{V}_{\bar{\mathbf{Z}}}\| = \frac 1 \nu \max \Bigl \{ \Bigl \|\sum_{i=1}^N \Bigl ( \frac{\tilde{\gamma}_i^{(t)}}{\tilde{\eta}_i} - 1\Bigr )p_{ii}^2 \ell_i(\mathbf{U}_\partial) \mathbf{u}_{(i)}^\top \mathbf{u}_{(i)} \Bigr \|, \Bigl \| \sum_{i=1}^N \Bigl ( \frac{\tilde{\gamma}_i^{(t)}}{\tilde{\eta}_i} - 1\Bigr )p_{ii}^2 \ell_i(\mathbf{U}) {\mathbf{u}_\partial}_{(i)}^\top {\mathbf{u}_\partial}_{(i)} \Bigr \| \Bigr \}.
$$
To get the upper bound of $\|\mathsf{E}_{t,i}\|$ we note that the sum of coefficients $\tilde{\gamma}_i^{(t)} \tilde{\eta}_i^{-1} - 1$ is maximised when $\tilde{\gamma_i}^{(t)}=1$ for all $t \in \{\nu\}$ and $\tilde{\eta}_i = \min\{1,c \zeta_i\} < \frac 1 2$. Substituting the definition of $\zeta$ from \eqref{defnzeta} yields
\begin{align*}
\|\mathsf{E}_{t,i}\| & = \frac 1 \nu \Bigl \| \Bigl ( \frac{1}{c \zeta_i} - 1 \Bigr ) p_{ii} \mathbf{u}_{(i)}^\top {\mathbf{u}_\partial}_{(i)}\Bigr \| \leq \frac{1}{\nu c} \Bigl ( \sum_{i=1}^N \ell_i(\mathbf{U}) \ell_i(\mathbf{U}_\partial) \Bigr ) \max_i \Bigl \| \frac{p_{ii}}{\ell_i(\mathbf{U}) \ell_i(\mathbf{U}_\partial)} \mathbf{u}_{(i)}^\top  {\mathbf{u}_\partial}_{(i)} \Bigr \|\\
& \leq \frac{1}{\nu c} \Bigl ( \sum_{i=1}^N \ell_i(\mathbf{U}) \ell_i(\mathbf{U}_\partial) \Bigr ) \max_i \Bigl ( \frac{p_{ii}}{\sqrt{\ell_i(\mathbf{U}) \ell_i(\mathbf{U}_\partial})} \Bigr ).
\end{align*}
Plugging these into Bernstein's inequality for rectangular zero-mean random matrices gives the result. 

\section{Sketching the right hand side vector}\label{SketchZCV}

The right hand side vector estimator $\mathbf{q}_B(\mathbf{P})$ as used in line 15 of algorithm \ref{alg:main} is evaluated as 
$$
\mathbf{q}_B(\mathbf{P}) = \bigl ( \mathbf{V} \boldsymbol{\Sigma} \bigr )^{-1} \boldsymbol{\Phi}^\top \mathbf{f} - \mathbf{Z}_B(\mathbf{P}) \boldsymbol{\Sigma}_\partial \mathbf{V}_\partial \mathbf{u}^\partial,
$$
where $\mathbf{Z}_B(\mathbf{P})$ is the control variates corrected sketch average in \eqref{cvformula}. To find the appropriate low-dimensional set of parameters $\mathbf{R} \in \mathds{L}$ and set $\bar{\mathbf{Z}}(\mathbf{R})$ as the control variates matrix for $\bar{\mathbf{Z}}(\mathbf{P})$ we set to minimise 
\begin{align*}
\mathbb{V}\bigr (\bar{\mathbf{Z}}(\mathbf{P}) - \bar{\mathbf{Z}}(\mathbf{R}) \bigr ) & = \frac 1 \nu \mathbb{V} \Bigl ( \sum_{h=1}^s \sum_{k=1}^{m} \sum_{i=1}^N \frac{\tilde{\gamma}_i}{\tilde{\eta}_i} (p_{ii} - r_{ii} )^2 u_{ih} {u_\partial}_{ik} \Bigr )\\
& = \frac 1 \nu \sum_{i=1}^N \Bigl ( \frac{1}{\tilde{\eta}_i} - 1 \Bigr ) (p_{ii} - r_{ii} )^2  (\mathbf{u}_{(i)} \mathbf{1})^2 ({\mathbf{u}_\partial}_{(i)} \mathbf{1})^2
\end{align*}
where $\mathbf{1}$ is the vector of ones in the appropriate dimension. Taking the gradient of the above to zero gives the optimal values for the degrees of freedom in the diagonal $\mathbf{R}$. 

\bibliographystyle{ieeetr} 
\bibliography{biblio} 

\end{document}